\documentclass[a4paper,10pt]{amsart}
\usepackage[utf8]{inputenc}
\usepackage{amsthm}
\usepackage{amsmath}
\usepackage{bm}
\usepackage{amsfonts}
\usepackage{amssymb}
\usepackage{mathtools}
\usepackage{mathrsfs}
\usepackage{setspace}
\usepackage{xcolor}
\usepackage{textgreek}
\usepackage{todonotes}
%\usepackage{pgf,tikz}
%\usetikzlibrary{arrows}
%\usetikzlibrary[patterns]
%\usepackage[a4paper, left=2cm, right=2cm, top=2cm, bottom=3cm]{geometry}
\usepackage{thmtools} % solves problem of shared counters in statements and autoref

%\doublespacing

\usepackage[pdfdisplaydoctitle,colorlinks,breaklinks,urlcolor=blue,linkcolor=blue,citecolor=blue]{hyperref} %must always be the last

\newtheorem{thm}{Theorem}[section]

\newtheorem{definition}[thm]{Definition}
\newtheorem{cor}[thm]{Corollary}

\newtheorem{lem}[thm]{Lemma}

\newtheorem{prop}[thm]{Proposition}

\theoremstyle{remark}
\newtheorem{rmk}{Remark}

% SHORTCUTS FOR FREQUENTLY USED MATH SYMBOLS
\newcommand{\R}{\mathbb{R}}

\newcommand{\N}{\mathbb{N}}

%%%%%%%%%%%%%%%%%%%%%%%%%%%%%%%%%%%%%%%%%%%%%%%%%%%%%%%%%%%%%%%%%%%%%%

%%%%%%%%%%%%%%%%%%%%%%%%%%%%%%%%%%%%%%%%%%%%%%%%%%%%%%%%%%%%%%%%%%%%%%%%

\title[2D Euler equations stochastic model reduction]{2D Euler equations with Stratonovich transport noise as a large scale
stochastic model reduction}
\author[F. Flandoli]{Franco Flandoli}
  \address{Scuola Normale Superiore, Piazza dei Cavalieri, 7, 56126 Pisa, Italia}
  \email{\href{mailto:franco.flandoli@sns.it}{franco.flandoli@sns.it}}
\author[U. Pappalettera]{Umberto Pappalettera}
  \address{Scuola Normale Superiore, Piazza dei Cavalieri, 7, 56126 Pisa, Italia}
  \email{\href{mailto:umberto.pappalettera@sns.it}{umberto.pappalettera@sns.it}}
\keywords{}
\date\today

\begin{document}

\begin{abstract}
The limit from an Euler type system to the 2D Euler equations with
Stratonovich transport noise is investigated.
A weak convergence result for the vorticity field and a strong convergence result for the velocity field are proved. Our results aim to provide a  stochastic reduction of fluid-dynamics models with three different time scales.
\end{abstract}

\maketitle

%%%%%%%%%%%%%%%%%%%%%%%%%%%%%%%%%%%%%%%%%%%%%%%%%%%%%%%%%%%%%%%%%%%%%%%%%%%%%%%%%%%%%%%%%%%%%

\section{Introduction}

This work deals with the 2D Euler equations in vorticity form on the two
dimensional torus $\mathbb{T}^{2}=\mathbb{R}^{2}/\mathbb{Z}^{2}:$
\begin{align} \label{eq:xi_intro}
	\begin{cases}
\partial_{t}\xi+u\cdot\nabla\xi  =0, \\
\xi|_{t=0}  =\xi_{0}, 
	\end{cases}
\end{align}
where $\xi : \mathbb{T}^2 \to \R$ is the vorticity field and 
\begin{align*}
u  =K\ast\xi, \quad
\operatorname{div}u  =0,
\end{align*}
is the solenoidal velocity vector field reconstructed from $\xi$ using the Biot-Savart kernel $K$:
\begin{align*}
K\ast\xi = -\nabla^\perp (-\Delta)^{-1} \xi.
\end{align*}

Simulations of this ideal model, as well as observations of roughly two dimensional physical
systems like certain layers of the atmosphere, show a superposition of vortex
structures of different size. The basic idea behind this work is that, with a
great degree of approximation, one could describe the motion of large scale
structures by a stochastic version of 2D Euler equations, where the noise
replaces part of the influence of small scale structures on large scale ones. This
fits with the general idea of stochastic model reduction \cite{MaTiVE01,FrMaVa05,FrMa06,JaTiVa15,FrOlRaBa19}, but the precise
formulation given here is new to our knowledge.

Mathematically speaking, we present a convergence result from a system of two,
coupled, Euler-type equations to a single stochastic Euler equation with
transport type Stratonovich noise. Behind the theoretical statement, there is
a heuristic motivation based on three time scales, carefully described in
\autoref{sec:mot}.

Let us start with the mathematical result. The system of two,
coupled, Euler-type equations we consider is this work is the following:%
\begin{align} \label{eq:full_euler}\tag{E}
	\begin{cases}
d\xi_{\text{L}}^{\epsilon}+u_{\text{L}}^{\epsilon}\cdot\nabla\xi_{\text{L} %
}^{\epsilon} dt  = 
-u_{\text{S}}^{\epsilon}\cdot\nabla\xi_{\text{L} %
}^{\epsilon} dt,\\
d\xi_{\text{S}}^{\epsilon}+u_{\text{L}}^{\epsilon}\cdot\nabla\xi_{\text{S}%
}^{\epsilon}dt   =-\epsilon^{-2}%
\xi_{\text{S}}^{\epsilon}dt+\epsilon^{-2}dW,\\
u_{\text{L}}^{\epsilon}=K\ast\xi_{\text{L}}^{\epsilon},\\
u_{\text{S}}^{\epsilon}=K\ast\xi_{\text{S}}^{\epsilon},\\
\xi_{\text{L}}^{\epsilon}|_{t=0}   =\xi_{0}
,\quad
\xi_{\text{S}}^{\epsilon
}|_{t=0}=\xi_{\text{S}}^{0,\epsilon}.
	\end{cases}
\end{align}
Here $\epsilon>0$ is a scaling parameter and $\left(  W_{t}\right)  _{t\geq0}$
is a space-dependent Brownian motion of the form
\begin{align} \label{eq:W_intro}
W_{t}(x)=\sum_{k\in\mathbb{N}}\theta_{k}(x)\beta_{t}^{k},
\end{align}
where the family $\{\beta^{k}\}_{k\in\mathbb{N}}$ is made of independent
standard Brownian motions and the coefficients $\theta_{k}$ are solenoidal,
periodic and zero mean, sufficiently regular and decrease sufficiently fast
with respect to $k$, in a suitable sense to be determined later. 

The subscripts in the two components $\left(  \xi_{\text{L}}^{\epsilon} ,\xi_{\text{S}}^{\epsilon}\right)  $ refer to large scales and
small scales. 
{ For the sake of simplicity, we take the initial condition $\xi_{\text{S}}^{0,\epsilon}$ to be distributed as the invariant measure of the linear part of the equation for $\xi_{\text{S}}^{\epsilon}$ (see \autoref{sec:not} for details), but a more general initial condition in the small scale dynamics can be easily handled. }

Our main result is the following, the precise
meaning of solution to \eqref{eq:full_euler} being given by \autoref{prop:well_posedness} below:
\begin{thm}
\label{thm:intro} 
Let $T>0$ and suppose we are given a zero-mean $\xi_0 \in L^\infty(\mathbb{T}^2)$.
Denote $B_{t}(x)=-K\ast W_{t}(x)$ and let
$\xi_{\text{L}}$ be the unique solution of the stochastic equation%
\begin{align} \label{eq:xi_lim_intro}
	\begin{cases}
d\xi_{\text{L}}+u_{\text{L}}\cdot\nabla\xi_{\text{L}}dt  = 
\nabla\xi_{\text{L}}\circ dB,\\
u_{\text{L}}  =K\ast\xi_{\text{L}},\\
\xi_{\text{L}}|_{t=0}  =\xi_{0}.
	\end{cases}
\end{align}
Then, under suitable assumptions on the coefficients $\theta_{k}$, the process
$\xi_{\text{L}}^{\epsilon}$ solution of \eqref{eq:full_euler} converges as
$\epsilon\rightarrow0$ to $\xi_{\text{L}}$ in the following sense: for every
$f\in L^{1}(\mathbb{T}^{2})$:
\[
\mathbb{E}\left[  \left\vert \int_{\mathbb{T}^{2}}\xi_{\text{L}}^{\epsilon
}(t,x)f\left(  x\right)  dx-\int_{\mathbb{T}^{2}}\xi_{\text{L}}(t,x)f\left(
x\right)  dx\right\vert \right]  \rightarrow0
\]
as $\epsilon\rightarrow0$, for every fixed $t\in\lbrack0,T]$ and in
$L^{p}([0,T])$ for every finite $p$. Under the same assumptions on the
coefficients $\theta_{k}$, the velocity field $u_{\text{L}}^{\epsilon}=K\ast\xi_{\text{L}%
}^{\epsilon}$ converges as $\epsilon\rightarrow0$, in mean value, to
$u_{\text{L}}=K\ast\xi_{\text{L}}$, as variables in $C([0,T],L^{1}%
(\mathbb{T}^{2},\mathbb{R}^{2}))$.
\end{thm}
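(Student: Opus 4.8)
The plan is to combine the uniform a priori bounds coming from the transport structure with an It\^o--Stratonovich corrector analysis of the fast small-scale dynamics, and then to close the argument by a tightness/identification scheme together with the uniqueness provided by \autoref{prop:well_posedness}. First I would record the a priori estimates. The first line of \eqref{eq:full_euler} says that $\xi_{\text{L}}^\epsilon$ solves a linear transport equation driven by the divergence-free field $u_{\text{L}}^\epsilon+u_{\text{S}}^\epsilon$, so all spatial $L^p$ norms and the zero-mean property are conserved pathwise; in particular $\|\xi_{\text{L}}^\epsilon(t)\|_{L^\infty}=\|\xi_0\|_{L^\infty}$ uniformly in $\epsilon$, $t$ and $\omega$. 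For the fast component, the second line is an Ornstein--Uhlenbeck equation perturbed by the transport term $u_{\text{L}}^\epsilon\cdot\nabla\xi_{\text{S}}^\epsilon$; since the relaxation rate $\epsilon^{-2}$ dominates the $O(1)$ transport, I would show that $\xi_{\text{S}}^\epsilon$ stays close, in suitable negative Sobolev norms, to the stationary OU process of its linear part, whose stationary amplitude is of order $\epsilon^{-1}$ while $\epsilon^2\,\E[\xi_{\text{S}}^\epsilon\otimes\xi_{\text{S}}^\epsilon]$ stabilises to the covariance structure associated with $W$.

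The heart of the argument is the passage of the small-scale transport into a Stratonovich noise. Testing the large-scale equation against a smooth $\phi$ and integrating by parts gives
\[
\langle\xi_{\text{L}}^\epsilon(t),\phi\rangle=\langle\xi_0,\phi\rangle+\int_0^t\langle\xi_{\text{L}}^\epsilon,u_{\text{L}}^\epsilon\cdot\nabla\phi\rangle\,ds+\int_0^t\langle\xi_{\text{L}}^\epsilon,u_{\text{S}}^\epsilon\cdot\nabla\phi\rangle\,ds .
\]
To handle the last term I would use the small-scale equation to substitute $u_{\text{S}}^\epsilon\,ds=-dB-\epsilon^2\,du_{\text{S}}^\epsilon-\epsilon^2\,K\ast(u_{\text{L}}^\epsilon\cdot\nabla\xi_{\text{S}}^\epsilon)\,ds$, obtained by applying $K\ast$ to the second line of \eqref{eq:full_euler} and recalling $B=-K\ast W$. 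The leading piece produces the martingale term $-\int_0^t\langle\xi_{\text{L}}^\epsilon,dB\cdot\nabla\phi\rangle$; the term $-\epsilon^2\int_0^t\langle\xi_{\text{L}}^\epsilon,du_{\text{S}}^\epsilon\cdot\nabla\phi\rangle$ I would integrate by parts in time, so that, using $d\xi_{\text{L}}^\epsilon=-(u_{\text{L}}^\epsilon+u_{\text{S}}^\epsilon)\cdot\nabla\xi_{\text{L}}^\epsilon\,ds$, it generates the quadratic functional $\epsilon^2\int_0^t\langle u_{\text{S}}^\epsilon\cdot\nabla\xi_{\text{L}}^\epsilon,u_{\text{S}}^\epsilon\cdot\nabla\phi\rangle\,ds$ together with boundary and mixed remainders that are genuinely $o(1)$ by the amplitude bounds above. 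Since $\epsilon^2\,\E[u_{\text{S}}^\epsilon\otimes u_{\text{S}}^\epsilon]$ is $O(1)$, an ergodic averaging argument on the fast OU process shows that this functional converges to the deterministic corrector $\tfrac12\sum_k\int_0^t\langle(K\ast\theta_k)\cdot\nabla\xi_{\text{L}},(K\ast\theta_k)\cdot\nabla\phi\rangle\,ds$, which is exactly the It\^o correction that, together with the martingale term, reconstitutes the Stratonovich integral $\langle\nabla\xi_{\text{L}}\circ dB,\phi\rangle$ of \eqref{eq:xi_lim_intro}.

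Finally I would close the loop by compactness. The uniform $L^\infty$ bound makes $\{\xi_{\text{L}}^\epsilon(t)\}$ relatively compact in the weak-$*$ topology, and the decomposition above provides equicontinuity of the tested trajectories $t\mapsto\langle\xi_{\text{L}}^\epsilon,\phi\rangle$, hence tightness of the laws of $\xi_{\text{L}}^\epsilon$ jointly with $B$ in, say, $C([0,T];H^{-s})$ for $s$ large. Any limit point solves the weak/martingale formulation of \eqref{eq:xi_lim_intro}, so by the uniqueness in \autoref{prop:well_posedness} the whole family converges; bounded convergence together with the $L^\infty$ bound then upgrades this to the stated convergence in expectation for fixed $t$ and in $L^p([0,T])$. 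For the velocity, $K\ast$ gains one derivative and hence maps $L^\infty$-bounded sets compactly into $C([0,T];L^1)$ by an Aubin--Lions type argument, again using the time-equicontinuity, so the weak convergence of $\xi_{\text{L}}^\epsilon$ turns into strong convergence of $u_{\text{L}}^\epsilon=K\ast\xi_{\text{L}}^\epsilon$. I expect the main obstacle to be the corrector step: controlling the quadratic fast functional uniformly in $\epsilon$, and in particular showing that the transport perturbation $u_{\text{L}}^\epsilon\cdot\nabla\xi_{\text{S}}^\epsilon$ in the small-scale equation does not contaminate the ergodic average at order $O(1)$.
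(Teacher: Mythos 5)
Your route is genuinely different from the paper's. The paper works entirely at the Lagrangian level: it defines solutions as transport of $\xi_0$ along characteristics, proves convergence of the flows $\varphi^\epsilon\to\varphi$ in $L^1(\mathbb{T}^2,\mathbb{T}^2)$ via a time discretization and the Nakao method (\autoref{lem:nakao}), controls the Biot--Savart drift with the log-Lipschitz estimate of \autoref{lem:log_lip} and the Osgood comparison of \autoref{lem:comp}, reduces the full system \eqref{eq:full_euler} to the simplified one by the decomposition $\xi_{\text{S}}^\epsilon=\Theta^\epsilon+\zeta^\epsilon$ with $\|\zeta^\epsilon\|_{L^1}=O(\epsilon)$ in expectation, and only at the very end translates flow convergence into weak convergence of the vorticity. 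You instead propose the Eulerian/perturbed-test-function scheme: substitute the fast equation into the tested slow equation, extract the martingale part and the It\^o--Stratonovich corrector, and close by tightness plus identification of the limit martingale problem. That scheme is in principle viable and would yield convergence in law rather directly, but as written it has gaps at exactly the points where the paper's machinery does the work.

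Concretely: (i) the ``ergodic averaging'' of $\epsilon^2\int_0^t\langle u_{\text{S}}^\epsilon\cdot\nabla\xi_{\text{L}}^\epsilon,\,u_{\text{S}}^\epsilon\cdot\nabla\phi\rangle\,ds$ is not a side issue you can flag and defer --- it is the theorem. The weight $\xi_{\text{L}}^\epsilon$ is adapted, $\epsilon$-dependent and correlated with the fast OU modes, so you need a quantitative decoupling statement of the form $\int_0^t\bigl(\epsilon^2\eta^{\epsilon,k}_s\eta^{\epsilon,h}_s-\tfrac{\delta_{hk}}{2}\bigr)g_s\,ds\to0$ uniformly over a class of adapted integrands; this is precisely what \autoref{lem:nakao} provides (by freezing $g$ on a mesh $\delta_\epsilon$ with $\delta_\epsilon/\epsilon^2\to\infty$, $\delta_\epsilon^2/\epsilon^3\to0$, and computing the conditional expectation \eqref{eq:cond_exp}), and the contamination by the transport term $u_{\text{L}}^\epsilon\cdot\nabla\xi_{\text{S}}^\epsilon$ is handled by the $\zeta^\epsilon$ estimate of \autoref{prop:well_posedness}; neither is supplied in your sketch. (ii) Your corrector term involves $\nabla\xi_{\text{L}}^\epsilon$, which is only a distribution; you must integrate by parts onto the test function, which produces $\nabla u_{\text{S}}^\epsilon$ and is where assumption (A1) enters --- worth making explicit. (iii) In the identification step, the quadratic term $\int\langle\xi_{\text{L}}^\epsilon,u_{\text{L}}^\epsilon\cdot\nabla\phi\rangle$ does not pass to the limit under weak-$*$ convergence of $\xi_{\text{L}}^\epsilon$ alone; you need the strong $C([0,T],L^1)$ (or $L^2$) compactness of $u_{\text{L}}^\epsilon$ \emph{before} identifying the limit, not only afterwards for the velocity statement. (iv) Tightness and martingale identification give convergence in law, whereas \autoref{thm:intro} asserts convergence in mean with $\xi_{\text{L}}$ driven by the \emph{same} $W$; you need a Gy\"ongy--Krylov or pathwise-uniqueness argument to upgrade, and the uniqueness you invoke should be that of the limit equation \eqref{eq:xi} (from \cite{BrFlMa16}), not \autoref{prop:well_posedness}, which concerns the approximating system. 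Until (i)--(iv) are filled in, the proposal is a plausible program rather than a proof.
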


Equations of fluid mechanics with Stratonovich transport noise like \eqref{eq:xi_lim_intro} received great attention in recent years. Precursors already
appeared several years ago, see for instance \cite{BrCaFl91,BrCaFl92,MiRo04,MiRo05}. Then it was observed, for particular models (see for
instance \cite{FlGuPr10,Ma11,FlMaNe14,
BaBeFe14,Bi13,BeFlGuMa14,Fl11,BiFl20} and others) that such noise has sometimes
rich regularizing properties, typically in terms of improved uniqueness
results or blow-up control. This also contributed to additional investigations
on such random perturbation. More recently, the problem of which precise Stratonovich
transport-advection noise should be considered was understood by \cite{Ho15}
by the development of a stochastic geometric approach based on a variational
principle; concerning this important issue, let us mention that the correct
noise term for the vorticity equation in 3-dimensions has the form $\nabla
\xi_{\text{L}}\circ dB-\xi_{\text{L}}\circ d\nabla B$, which reduces in 2D to
$\nabla\xi_{\text{L}}\circ dB$, the noise used in the theorem above (see for
instance \cite{CrFlHo19} for a rigorous result in the 3D case). Our result here,
therefore, adds further motivation for the use of this kind of random
perturbations; see also \cite{CoGoHo17,GaHo18} for a justification of this
noise from a viewpoint that has certain conceptual similarities with our one here.

In \autoref{sec:mot} we describe in detail why a system for
$\left(  \xi_{\text{L}}^{\epsilon},\xi_{\text{S}}^{\epsilon}\right)$ like \eqref{eq:full_euler} above may
arise in applications. It is not only a question of splitting the global
vorticity field in two parts; a central detail, responsible for the final
result, is the precise scaling $\epsilon^{-2}\xi_{\text{S}}^{\epsilon
}dt+\epsilon^{-2}dW$. It is not obvious, a priori, why this scaling should
appear, since the usual stochastic equations with a scaling parameter that
appear in the literature have the form $\epsilon^{-2}\xi_{\text{S}}^{\epsilon
}dt+\epsilon^{-1}dW$. But when there are three time scales in the system, with
the features outlined in \autoref{sec:mot}, the special scaling of our model is
natural. See \cite{MaTiVE01} and subsequent works for
similar arguments, that were the basis of our research, although other
aspects are basically different - in particular the finite dimensionality of
the limit models in those works. As remarked in \autoref{sec:mot}, one issue over
others is critical in the approximations: the inverse cascade is not properly
captured by this model. This is however a general open problem in the realm of
stochastic model reduction.

Our final result looks like a particular issue of the general Wong-Zakai
approximation principle \cite{WoZa65}. For the Euler system, it seems the first result in
this direction. In the case of Navier-Stokes equations, other forms are
already known, see \cite{HoLeNi19,HoLeNi19+} based on rough path theory; for different equations, we mention among others the results contained in \cite{BrCaFl88,Tw93,TeZa06}. 

Our proof is based on a probabilistic argument for the Lagrangian dynamics associated with the problem \eqref{eq:full_euler}: in fact, the formulation itself - the meaning of solution -
adopted here is the Lagrangian one. 
For the deterministic Euler equations the
Lagrangian approach is classical, see for instance \cite{MaPu94} where it is
also used to prove existence and uniqueness of a solution of class $\xi\in
L^{\infty}\left(  \left[  0,T\right]  ,L^{\infty}\left(  \mathbb{T}%
^{2}\right)  \right)  $ for bounded measurable zero-mean initial vorticity. For the
stochastic case we rely on similar results proved in \cite{BrFlMa16}.

{
In the present work, we prove in the first place  a convergence result for the Lagrangian particle trajectories, or \emph{characteristics}. Then, relying on the measure-preserving property of characteristics, we are able to prove convergence of the vorticity fields in the sense of \autoref{thm:intro}. 
We would like to stress the following technical issue: the equation of characteristics contains the velocity field itself as drift, and a careful analysis of the Biot-Savart kernel is required to overcome this difficulty. 
We hope that our method can be generalized to other equations in dimension two similar to Euler, such as modified surface quasi-geostrophic equations \cite{ChCoWu11}.
Three dimensional models might also be included, possibly requiring a regularization of the nonlinearity as in \cite{ChHoOlTi05}.  }

The paper is organized as follows. In \autoref{sec:mot} we present the main
motivations behind this work, in particular we justify the interest in the
asymptotics as $\epsilon\rightarrow0$ of system \eqref{eq:full_euler}. In
\autoref{sec:not} we introduce a rigorous mathematical setting and give a
reformulation of the convergence $\xi_{\text{L}}^{\epsilon}\rightarrow
\xi_{\text{L}}$ in terms of the convergence of the characteristics, see below for details; here we
introduce a simplified version of system \eqref{eq:full_euler}, which is more
convenient to capture the main mathematical features of the original system
without obscuring them behind heavy calculations. Subsequent
\autoref{sec:conv_char} is devoted to the convergence of characteristics,
which relies on an argument similar to those contained in \cite{IkWa14} as well as some classical
estimates on the Biot-Savart kernel $K$. In \autoref{sec:conv_vort} we see how
the convergence of the vorticity fields (in the sense of
\autoref{thm:intro}) can be deduced from the convergence at the level of
characteristics. Finally, in \autoref{sec:full} we transpose the results
concerning the simplified system introduced in \autoref{sec:not} back to the
original system \eqref{eq:full_euler}. In the Appendix we prove the
equivalence between the Lagrangian notion of solution and the distributional
notion of solution to \eqref{eq:full_euler}, in a sense to be specified later, thus further broadening
the scope of our results.

\section{Motivations} \label{sec:mot}
In this section we discuss the motivations that justify our interest for the asymptotical behaviour as $\epsilon \to 0$ of $\xi^\epsilon_{\text{L}}$ solution to \eqref{eq:full_euler}.

First of all, we clarify from the beginning that the theory illustrated in this work applies to systems with \textit{three}
time scales, this sentence to be understood as explained below.

We need a small time scale $\mathcal{T}_{\text{S}}$ at which we observe
variations, fluctuations, of the main fields (here the vorticity field). We
need an intermediate scale $\mathcal{T}_{\text{M}}$ at which the previous
fluctuations look random, but not like a white noise, just random with a
typical time of variation of order $\mathcal{T}_{\text{S}}$ (small with
respect to $\mathcal{T}_{\text{M}}$). Then we need a third, large, time scale
$\mathcal{T}_{\text{L}}$ where, as a result of the theory, the small scale
fluctuations will appear as a white noise, of multiplicative type in the
present work. The following relation will play a role:%
\begin{equation}
\frac{\mathcal{T}_{\text{L}}}{\mathcal{T}_{\text{M}}}=\frac{\mathcal{T}%
_{\text{M}}}{\mathcal{T}_{\text{S}}}.\label{link scales}%
\end{equation}

We illustrate this framework of three time scales by means of an admittedly
phenomenological model. We think of a fluid which develops small scale
fluctuations at the time scale of $1 \,s$: think to wind, roughly two dimensional to
fit with our mathematical result, which flows over an irregular ground
producing small scale vortices and perturbations. The small scale
$\mathcal{T}_{\text{S}}$ has the order of $1 \,s$. The intermediate scale has the
order of $1 \,min$: in a minute, the fluctuations we observe appear as
random, with a typical fluctuation time of $1 \,s$. The large time scale will be
of the order of $1 \,h$: at such scale, the fluctuations will look as a white noise.

An example, always ideal, may be the atmospheric fluid over a large region,
limited to the lower layer, the one that interacts with the irregularities of
the ground (like the mountains). Not aiming to a precise description of such a
complex physical system, but just to visualize certain ideas, let us idealize
such fluid by means of 2D Euler equations with forcing, written in vorticity
form:%
\begin{align} \label{eq:euler_f}
	\begin{cases}
\partial_{t}\xi+u\cdot\nabla\xi  =f,\\
u  =K\ast\xi,
	\end{cases}
\end{align}
where $f$ represents the production of small scale perturbations by the
irregularities of hills and mountain profiles, for instance. For long run
investigations it is necessary to include other realistic terms, like a small
friction $-\alpha\xi$ and an even smaller dissipation $\nu \Delta \xi$, for some coefficients $1 \gg \alpha \gg \nu >0$, in order to dissipate
the energy introduced by $f$; but it is not essential to discuss such facts here.

\subsection{Human scale: seconds}

By human scale we mean the system observed by us, humans, who observe
distances in meters and appreciate variations over time spans of seconds. The
key quantity here is $\mathcal{T}_{\text{S}} = 1 \,s$.
Velocity $u\left(  t,x\right)  $ is measured in $m/s$ and vorticity $\xi\left(
t,x\right)  $ in $s^{-1}$.

Assume we split the initial conditions according to some reasonable rule
(geometric, spectral...), in large and small scales%
\[
\xi|_{t=0}=\xi_{\text{L}}\left(  0\right)  +\xi_{\text{S}}\left(  0\right)  .
\]
Small scales describe the wind fluctuations at space distances of $1-10$
meters;\ large scales those which impact at the regional level (national,
continental), namely with structures of size $10-1000 \,km$. We assume this
separation of scales at time $t=0$.

{
Having in mind \eqref{eq:euler_f}, and the previous splitting of the vorticity field in large and small scales, we consider the following system for the evolution of $\xi_{\text{L}}$, $\xi_{\text{S}}$:%
\begin{align} \label{eq:split}
	\begin{cases}
\partial_{t}\xi_{\text{L}}+\left(  u_{\text{L}}+u_{\text{S}}\right)
\cdot\nabla\xi_{\text{L}}   =0,\\
\partial_{t}\xi_{\text{S}}+\left(  u_{\text{L}}+u_{\text{S}}\right)
\cdot\nabla\xi_{\text{S}}  =f_{\text{S}},%
	\end{cases}
\end{align}
where $u_{\text{L}} = K \ast \xi_{\text{L}}$, $u_{\text{S}} = K \ast \xi_{\text{S}}$ and $f_{\text{S}}$ incorporates the small scale inputs due to ground
irregularities. 
We assume that $f_{\text{S}}$ includes variations at distances of $1-10$ meters, with changes in time in a range of order of $1$ second. 

It is easy to check that the splitting \eqref{eq:split} is consistent with \eqref{eq:euler_f}, in the
sense that if $(\xi_{\text{L}}, \xi_{\text{S}})$ is a solution of \eqref{eq:split}, then $\xi = \xi_{\text{L}}+\xi_{\text{S}}$ is a solution of \eqref{eq:euler_f}.
We point out, however, that \eqref{eq:split} can not be deduced from \eqref{eq:euler_f} and the separation of scales at time $t=0$, but rather it is a modelling hypothesis.
}

\subsection{Intermediate scale: minutes}

Let us observe the same system from the viewpoint of a recording device which
keeps memory of the wind, but with a time scale of minutes: $\mathcal{T}_{\text{M}} = 1 \,min$.
At such time scale, the fluctuations described in the previous subsection look random, the spatial scale beeing the same as above: $1-10$ meters.

This motivates our main modelling assumption, see also \cite{PeMa94,MaTiVE01,BoEc12}. We replace the small scales by a
stochastic equation, Gaussian conditionally to the large scales:%
\begin{align} \label{eq:model}
	\begin{cases}
\partial_{t}\xi_{\text{L}}+\left(  u_{\text{L}}+u_{\text{S}}\right)
\cdot\nabla\xi_{\text{L}}  =0,\\
\partial_{t}\xi_{\text{S}}+  u_{\text{L}} 
\cdot\nabla\xi_{\text{S}}  =-\frac{1}{\tau_{\text{M}}}\xi_{\text{S}}%
+\frac{\sigma}{\sqrt{\tau_{\text{M}}}}W_{\text{S}}^{\prime},%
	\end{cases}
\end{align}
with
\[
\tau_{\text{M}}=\frac{1}{60}%
\]
in the unit of measure of minutes.

\begin{rmk}
We cannot introduce this modelling assumption at the human scale, it is too
unrealistic. If we could, the value of the constant $\tau$ would be
$\tau_{\text{S}}=1$, in the unit of measure of seconds.
\end{rmk}

Heuristically speaking, in order to understand the phenomenology of the second
equation, let us drop the term $ u_{\text{L}}
\cdot\nabla\xi_{\text{S}}$, let us think to $W_{\text{S}}$ as a one dimensional Brownian motion, and realize
that the stochastic process defined as%
\[
\widetilde{\xi}_{\text{S}}\left(  t\right)  :=e^{-\frac{1}{\tau_{\text{M}}}%
t}\xi_{\text{S}}\left(  0\right)  +\int_{0}^{t}e^{-\frac{1}{\tau_{\text{M}}%
}\left(  t-s\right)  }\frac{\sigma}{\sqrt{\tau_{\text{M}}}}W_{\text{S}%
}^{\prime}\left(  s\right),
\]
which is a caricature of the true process $\xi_{\text{S}}$, converges very
fast (on the time-scale of minutes)\ to a stationary process, and - similarly
- it takes roughly $\tau_{\text{M}}=\frac{1}{60}$ minutes to go back to
equilibrium after a fluctuation. Thus, at the intermediate time scale $\mathcal{T}_{\text{M}}$, the
small scale process looks random, with visible variations every $\frac{1}{60}$
units of time.
Its intensity is (essentially)\ independent of $\tau_{\text{M}}$ and given by
$\sigma$: the variance of the stochastic integral in the previous formula is
$\int_{0}^{t}e^{-\frac{2}{\tau_{\text{M}}}\left(  t-s\right)  }\frac
{\sigma^{2}}{\tau_{\text{M}}}ds$. When the noise is space-dependent, the
intensity is also modulated in space, so $\sigma$ is a sort of global, mean
order of magnitude.

The replacement just discussed of the true small scale equation by a
stochastic equation has some natural motivations, discussed above, but it also
has flaws. One of them is related to the inverse cascade, which dominates the
energy transfer between scales in 2D, see \cite{Kr67}. Inverse cascade is mostly discarded in
this model, having replaced the transfer mechanism due to the term
$u_{\text{S}}\cdot\nabla\xi_{\text{S}}$ by a Gaussian term with no Fourier
exchange. We do not know how to remedy this drawback. Let us only mention
that, generally speaking, the problem of a correct energy transfer between
scales in stochastic parametrization and stochastic model reduction theories
is the most important essentially open problem; our work is not a contribution
to the solution of this extremely difficult problem but only the description
of a particular stochastic model reduction procedure, different from others
previously introduced in the literature.

\subsection{Regional scale: hours}

By this we mean the same system, lower atmospheric layer over a large region,
observed by a satellite. The unit of measure of  time is $\mathcal{T}_{\text{L}} =1 \,h$ and the unit of measure of space may be $10 - 1000 \,km$, that is now different from the spatial scale of meters proper of human and intermediate points of view.
We have chosen this scales having in mind, for instance, weather prediction.

How does it look like the system above seen at this space-time scale? If there
is no noise term, the formulae are the same as above with
\[
\tau_{\text{L}}=\frac{1}{60\times60}.
\]
But this rescaling, correct for the term $-\frac{1}{\tau_{\text{L}}}%
\xi_{\text{S}}$, does not hold true for the stochastic term $\frac{\sigma}{\sqrt{\tau}}W_{\text{S}%
}^{\prime}$. Let us see more closely the correct rescaling.

\begin{rmk}
We start to see here how the final result depends on the precise procedure
described in this section. If we had imposed the stochastic structure of
small scales from the very beginning, namely at the human level, then the
intermediate step would be unessential, since a single rescaling to the regional
level would give the same result. But the result of this alternative procedure would not be the one described in
this work, it would be different. It is essential that the passage from human
to intermediate scale is based on the rules of deterministic calculus, while
the passage from the intermediate to the regional scales is based on the rules
of stochastic calculus. Only in this way we get the scaling factors
characteristic of the theory described in this work.
\end{rmk}

In order to avoid trivial mistakes in the rescaling from intermediate to
regional scale, let us formalize in more detail the change of unit of measure.
The space and time variables at intermediate level will be denoted by $x,t$
and those at regional level by $X,T$. Essential is that the unit of measure of
$t$ is minutes and the one of $T$ is hours, differing by the factor%
\[
\epsilon^{-1}=60, \quad t = \epsilon^{-1} T.
\]
Less essential here is the role of the unit of measures of $x$ and $X$. We
assume they differ by a factor $\epsilon_{x}^{-1}$, namely 
\begin{align*}
x = \epsilon_{x}^{-1} X.
\end{align*}
The only place relevant for
applications where it will appear is in the modification of the
space-covariance of the noise, which however is not our main concern here.

Denote by $u\left(  t,x\right)  $ and $U\left(  T,X\right)  $ the velocities
in the intermediate and regional scale, respectively, and similarly by
$\xi\left(  t,x\right)  $ and $\Xi\left(  T,X\right)  $ for the vorticities.
We adopt the same notation for their large scale components $u_{\text{L}}$, $U_{\text{L}}$, $\xi_{\text{L}}$, $\Xi_{\text{L}}$ and their small scale components $u_{\text{S}}$, $U_{\text{S}}$, $\xi_{\text{S}}$, $\Xi_{\text{S}}$.
We have
\[
U\left(  T,X\right)  =\epsilon_{x}\epsilon^{-1}u\left(  \frac{T}{\epsilon
},\frac{X}{\epsilon_{x}}\right)
\]
and thus
\[
\Xi\left(  T,X\right)  =\epsilon^{-1}\xi\left(  \frac{T}{\epsilon},\frac
{X}{\epsilon_{x}}\right)  .
\]
Notice that the material derivative preserves its structure under unit measure
change, here up to the factor $\epsilon^{-2}$:
\[
\left[  \frac{\partial\Xi}{\partial T}+U\cdot\nabla_{X}\Xi\right]  \left(
T,X\right)  =\epsilon_{{}}^{-2}\left[  \frac{\partial\xi}{\partial t}%
+u\cdot\nabla_{x}\xi\right]  \left(  \frac{T}{\epsilon},\frac{X}{\epsilon_{x}%
}\right)  .
\]
Similar identities hold for \textquotedblleft mixed\textquotedblright%
\ material derivatives,  like $U_{\text{S}} 
\cdot\nabla_{X}\Xi_{\text{L}}$ and $U_{\text{L}}\cdot\nabla_{X}\Xi_{\text{S}}$.

Now, let us write the equations \eqref{eq:model} above from the viewpoint of the satellite:
\[
\left[  \partial_{T}\Xi_{\text{L}}+\left(  U_{\text{L}}+U_{\text{S}}\right)
\cdot\nabla_{X}\Xi_{\text{L}}\right]  \left(  T,X\right)  =0
\]%
\[
\left[  \partial_{T}\Xi_{\text{S}}+U_{\text{L}}\cdot\nabla_{X}\Xi_{\text{S}%
}\right]  \left(  T,X\right)  =\epsilon^{-2}\left[  -\frac{1}{\tau_{\text{M}}%
}\xi_{\text{S}}+\frac{\sigma}{\sqrt{\tau_{\text{M}}}}W_{\text{S}}^{\prime
}\right]  \left(  \frac{T}{\epsilon},\frac{X}{\epsilon_{x}}\right)  .
\]
Notice that we still have $\tau_{\text{M}}$ in these equations. Let us
elaborate the term on the right-hand-side of the second equation. First,
\begin{align*}
-\epsilon^{-2}\frac{1}{\tau_{\text{M}}}\xi_{\text{S}}\left(  \frac{T}%
{\epsilon},\frac{X}{\epsilon_{x}}\right)   &  =-\frac{1}{\tau_{\text{M}%
}\epsilon}\Xi_{\text{S}}\left(  T,X\right) \\
&  =:-\frac{1}{\tau_{\text{L}}}\Xi_{\text{S}}\left(  T,X\right)
\end{align*}
havind defined%
\[
\tau_{\text{L}}=\tau_{\text{M}}\epsilon=\frac{1}{60\times60}.
\]

Second, working with finite increments which is more clear when we deal with
Brownian motion, we have
\begin{align*}
\frac{\Delta W_{\text{S}}}{\Delta t}\left(  \frac{T}{\epsilon},\frac
{X}{\epsilon_{x}}\right)   &  =\frac{W_{\text{S}}\left(  \frac{T}{\epsilon
}+\Delta t,\frac{X}{\epsilon_{x}}\right)  -W_{\text{S}}\left(  \frac
{T}{\epsilon},\frac{X}{\epsilon_{x}}\right)  }{\Delta t}\\
&  =\epsilon\frac{W_{\text{S}}\left(  \frac{T+\epsilon\Delta t}{\epsilon
},\frac{X}{\epsilon_{x}}\right)  -W_{\text{S}}\left(  \frac{T}{\epsilon}%
,\frac{X}{\epsilon_{x}}\right)  }{\epsilon\Delta t}\\
&  \overset{\mathcal{L}}{=}\sqrt{\epsilon}\frac{\widetilde{W}_{\text{S}%
}\left(  T+\epsilon\Delta t,\frac{X}{\epsilon_{x}}\right)  -\widetilde{W}%
_{\text{S}}\left(  T,\frac{X}{\epsilon_{x}}\right)  }{\epsilon\Delta t},%
\end{align*}
for an auxiliary Brownian motion $\widetilde{W}_{\text{S}}$, namely
\[
W_{\text{S}}^{\prime}\left(  \frac{T}{\epsilon},\frac{X}{\epsilon_{x}}\right)
\overset{\mathcal{L}}{=}\sqrt{\epsilon}\widetilde{W}_{\text{S}}^{\prime
}\left(  T,\frac{X}{\epsilon_{x}}\right),
\]%
and therefore
\[
\epsilon^{-2}\frac{\sigma}{\sqrt{\tau_{\text{M}}}}W_{\text{S}}^{\prime}\left(
\frac{T}{\epsilon},\frac{X}{\epsilon_{x}}\right)  \overset{\mathcal{L}%
}{=}\epsilon^{-3/2}\frac{\sigma}{\sqrt{\tau_{\text{M}}}}\widetilde{W}%
_{\text{S}}^{\prime}\left(  T,\frac{X}{\epsilon_{x}}\right)  .
\]
Hence the equation for $U_{\text{S}}$ reads%
\[
\left[  \partial_{T}U_{\text{S}}+U_{\text{L}}\cdot\nabla_{X}U_{\text{S}%
}\right]  \left(  T,X\right)  =-\frac{1}{\tau_{\text{L}}}\Xi_{\text{S}}\left(
T,X\right)  +\frac{\sigma}{\epsilon\sqrt{\tau_{\text{L}}}}\widetilde{W}%
_{\text{S}}^{\prime}\left(  T,\frac{X}{\epsilon_{x}}\right)  .
\]
The distance at which we still may feel a correlation of the noise
$\widetilde{W}_{\text{S}}^{\prime}\left(  T,\frac{X}{\epsilon_{x}}\right)  $
is of order $\epsilon_{x}$, rescaled with respect to the intermediate level.

Recall now condition (\ref{link scales}). Translated into the new constant it
corresponds to what we have tacitly assumed, namely that $\epsilon=\tau_{\text{M}}$.
This implies $\tau_{\text{L}}=\epsilon^{2}$,
and thus%
\[
\left[  \partial_{T}U_{\text{S}}+U_{\text{L}}\cdot\nabla_{X}U_{\text{S}%
}\right]  \left(  T,X\right)  =-\frac{1}{\epsilon^{2}}\Xi_{\text{S}}\left(
T,X\right)  +\frac{\sigma}{\epsilon^{2}}\widetilde{W}_{\text{S}}^{\prime
}\left(  T,\frac{X}{\epsilon_{x}}\right),
\]
which is the form of our starting model of the rigorous theory (with different notation).

\section{Notation and preliminaries}\label{sec:not}

For any $p \in [1,\infty]$ denote $L^p_0(\mathbb{T}^2)$ the space of $p$-integrable zero-mean real functions on the two dimensional torus $\mathbb{T}^2$.
For the sake of a clear and effective presentation, we decide to study in the first place the following \textit{simplified} 2D Euler system:
\begin{equation} \label{eq:xi_eps}\tag{sE} 
\begin{cases}
d\xi^{\epsilon}_t
+u^{\epsilon}_t \cdot\nabla\xi^{\epsilon}_t dt   
=
-
\sum_{k \in \mathbb{N}} \sigma_k  \cdot\nabla\xi^{\epsilon}_t \eta^{\epsilon,k}_t dt,\\
\nonumber
u^{\epsilon}_t   =K\ast\xi^{\epsilon}_t,\\
\nonumber
\xi^{\epsilon}|_{t=0}   =\xi_{0},
\end{cases}
\end{equation}
where $\xi_{0} \in L_0^\infty(\mathbb{T}^2)$ is the (deterministic) initial condition, $K$ is the Biot-Savart kernel on the two dimensional torus $\mathbb{T}^2$, $\sigma_k = K \ast \theta_k : \mathbb{T}^2 \to \mathbb{R}^2$ and $\eta^{\epsilon,k}$ is a Ornstein-Uhlenbeck process:
\begin{equation*}
\eta^{\epsilon,k}_t = 
e^{-\epsilon^{-2} t} \eta^{\epsilon,k}_0 +
\int_0^t
\epsilon^{-2} e^{-\epsilon^{-2} (t-s)} d\beta^{k}_s,
\qquad k \in \mathbb{N}.
\end{equation*}
The family $\beta = \{\beta^{k}\}_{k \in \mathbb{N}}$ is made of independent standard Brownian motions on a given filtered probability space $(\Omega,\{\mathcal{F}_t\}_{t \geq 0},\mathbb{P})$, { and the initial conditions $\{\eta^{\epsilon,k}_0\}_{k \in \N}$ are measurable with respect to $\mathcal{F}_0$, so that the processes $\{\eta^{\epsilon,k}\}_{k \in \N}$ are progressively measurable with respect to the filtration $\{\mathcal{F}_t\}_{t \geq 0}$.
Moreover, up to a possible enlargement of the filtration $\{\mathcal{F}_t\}_{t \geq 0}$, to simplify our discussion we take independent initial conditions $\{\eta^{\epsilon,k}_0\}_{k \in \N}$, also independent of $\beta$, and distributed as centred Gaussian variables with variance equal to $\epsilon^{-2}/2$. In this way, the processes $\{\eta^{\epsilon,k}\}_{k \in \N}$ are stationary, and $\eta^{\epsilon,k}$ is independent of $\eta^{\epsilon,h}$ for $k \neq h$. 

\begin{rmk}
Notice that the process $\sum_k \sigma_k \eta^{\epsilon,k}$ is nothing but a rough approximation for the small scale vorticity $\xi_{\text{S}}^\epsilon$, obtained by simply dropping the nonlinear term in the second equation of \eqref{eq:full_euler}.
This simplified formulation of \eqref{eq:full_euler} clarifies \emph{why} we expect a Wong-Zakai result to be true for the large scale vorticity: indeed, for every $k \in \N$ the process $\eta^{\epsilon,k}$ formally converges to a white-in-time noise, because of the following computation:
\begin{align*}
\int_0^t \eta^{\epsilon,k}_s ds
&=
\int_0^t e^{-\epsilon^{-2} s} \eta^{\epsilon,k}_0 ds
+
\int_0^t \left(
\int_0^s \epsilon^{-2} e^{-\epsilon^{-2}(s-r)} d\beta^k_r \right) ds
\\
&=
\int_0^t e^{-\epsilon^{-2} s} \eta^{\epsilon,k}_0 ds
+
\int_0^t \left(
\int_r^t \epsilon^{-2} e^{-\epsilon^{-2}(s-r)} ds \right) d\beta^k_r 
\\
&=
\int_0^t e^{-\epsilon^{-2} s} \eta^{\epsilon,k}_0 ds
+
\int_0^t \left(
1 - e^{-\epsilon^{-2}(t-r)} ds \right) d\beta^k_r 
\\
&= \beta^k_t + O(\epsilon).
\end{align*}
\end{rmk}
}
 
We make the following assumption on the coefficients $\sigma_k$:
\begin{itemize}
\item[(\textbf{A1})] 
$\sigma_k \in C^2(\mathbb{T}^2,\mathbb{R}^2)$ for every $k \in \mathbb{N}$ and $\sum_{k \in \mathbb{N}} 
\| \nabla^2 \sigma_k \|_{L^\infty(\mathbb{T}^2,\mathbb{R}^8)}
< \infty $,
\end{itemize}
where $\nabla^2 \sigma_k$ is the second spatial derivative of $\sigma_k$ and is understood as a vector field taking values in $\mathbb{R}^8$:
\begin{align*}
(\nabla^2 \sigma_k(x))^\alpha_{\beta,\gamma} = \partial_{x_\beta} \partial_{x_\gamma} \sigma_k^\alpha (x), 
\quad
x \in \mathbb{T}^2, 
\quad 
\alpha,\beta,\gamma \in \{1,2\}. 
\end{align*}
Assumption (A1) above is immediately translated in the equivalent assumption on the coefficients $\theta_k$ of \eqref{eq:W_intro}:
\begin{itemize}
\item[(\textbf{A1})]
$\theta_k \in L^2_0(\mathbb{T}^2) \cap C^1(\mathbb{T}^2,\mathbb{R})$ for every $k \in \mathbb{N}$ and $\sum_{k \in \mathbb{N}} 
\| \nabla \theta_k \|_{L^\infty(\mathbb{T}^2,\mathbb{R}^2)}
< \infty $.
\end{itemize}
As an example, one can take, for $\mathbf{k} \in \mathbb{Z}^2 \setminus \{(0,0)\}$ and $e_\mathbf{k}(x) = \exp(2\pi i\mathbf{k}\cdot x)$,  
\begin{align*}
\theta_\mathbf{k} (x)
= q_\mathbf{k} e_\mathbf{k}(x), 
\qquad
q_\mathbf{k} \sim
\frac{1}{|\mathbf{k}|^{3+\delta}}, 
\mbox{ for some }\delta>0.
\end{align*}

In order to study well posedness of the system \eqref{eq:xi_eps}, we first need to specify what is the notion of solution we are going to study.
We give the following definitions:
\begin{definition}
We say that a measurable map $\varphi: \Omega \times [0,T] \times \mathbb{T}^2 \to \mathbb{T}^2$ is a \emph{stochastic flow of homeomorphisms} if:
\begin{itemize}
\item for almost every $\omega \in \Omega$, 
$\varphi(\omega,t):\mathbb{T}^2 \to \mathbb{T}^2$ is a homeomorphism for every $t \in [0,T]$;
\item for every $x \in \mathbb{T}^2$, $\varphi(x): \Omega \times [0,T] \to \mathbb{T}^2$ is progressively measurable with respect the filtration $\{\mathcal{F}_t\}_{t \in [0,T]}$. 
\end{itemize}
\end{definition}

\begin{definition}
A process $\xi \in L^\infty(\Omega \times [0,T] \times \mathbb{T}^2)$ is said to be \emph{weakly progressively measurable} if for every test function $f \in L^1(\mathbb{T}^2)$ the process 
\begin{align*}
t \mapsto \int_{\mathbb{T}^2} \xi_t(x) f(x) dx
\end{align*}
is progressively measurable with respect to the filtration $\{\mathcal{F}_t\}_{t \geq 0}$.
\end{definition}

The notion of solution to \eqref{eq:xi_eps} we adopt hereafter is the following Lagrangian formulation: 
{
\begin{definition} \label{def:sol}
Let $\epsilon>0$ and $\xi_0 \in L^\infty(\mathbb{T}^2)$. We say
that a weakly progressively measurable process $\xi^\epsilon$ is a solution to \eqref{eq:xi_eps}
if it is given by the transportation of the initial vorticity $\xi_0$ along the particle trajectories, in formulae: 
\begin{align} \label{eq:transport_eps}
\xi^{\epsilon}_t = \xi_0 \circ (\varphi^{\epsilon}_t)^{-1},
\end{align}
where $\varphi^{\epsilon}_t : \mathbb{T}^2 \to \mathbb{T}^2$ is a stochastic flow of homeomorphisms which satisfies for every $x \in \mathbb{T}^2$:
\begin{align} \label{eq:char_eps}
	\begin{cases}
d\varphi_{t}^{\epsilon}\left(  x\right) = 
u^{\epsilon}_t\left(
\varphi_{t}^{\epsilon}\left(  x\right)  \right)dt  +\sum_{k \in \mathbb{N}} \sigma_k \left( \varphi_{t}^{\epsilon}\left(  x\right)  \right) \eta^{\epsilon,k}_t dt,\\ 
\varphi_{0}^{\epsilon}\left(  x\right) = x.
	\end{cases}
\end{align}  
\end{definition}
}
We adopt the same terminology, \emph{mutatis mutandis}, for equations and systems similar to \eqref{eq:xi_eps}.

In fact, we shall prove that in this setting there exists an unique triple $(\xi^{\epsilon},u^{\epsilon},\varphi^{\epsilon})$ such that $u^\epsilon = K \ast \xi^\epsilon$, \eqref{eq:transport_eps}, and \eqref{eq:char_eps} hold simultaneously for every $t \in [0,T]$, see \autoref{prop:lagrangian} below.

The maps $\mathbb{T}^2 \ni x \mapsto \varphi_t(x)$, $t \in [0,T]$ are usually called the \emph{characteristics} associated to \eqref{eq:xi_eps}, since they describe the trajectory of an ideal fluid particle with initial position $x_0=x$. 

\begin{prop} \label{prop:lagrangian}
Assume (A1). Then, for every $\epsilon >0$ and $\xi_0 \in L^\infty(\mathbb{T}^2)$ there exists a unique stochastic flow of homeomorphisms $\varphi^\epsilon$ such that \eqref{eq:char_eps} holds with $u^{\epsilon}_t
= K \ast \xi^{\epsilon}_t $ and
\begin{equation*}
\xi^{\epsilon}_t   =\xi_{0} \circ (\varphi
_{t}^{\epsilon})^{-1}, 
\end{equation*}
as variables in $L^\infty(\mathbb{T}^2)$.
In particular, system \eqref{eq:xi_eps} is well posed in the sense of \autoref{def:sol}.

Moreover, for a.e. $\omega \in \Omega$, the map $\varphi^\epsilon_t :\mathbb{T}^2 \to \mathbb{T}^2$  is measure-preserving with respect to the Lebesgue measure on $\mathbb{T}^2$ for every $t \in [0,T]$:
\begin{align*}
\int_{\mathbb{T}^2} f(x) dx =
\int_{\mathbb{T}^2} f(\varphi^\epsilon_t(y)) dy,
\qquad
\mbox{ for every } f \in L^1(\mathbb{T}^2).
\end{align*}
\end{prop}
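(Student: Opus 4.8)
The plan is to reduce the whole statement to a pathwise, deterministic argument, exploiting the fact that \eqref{eq:char_eps} contains no stochastic integral: the processes $\eta^{\epsilon,k}$ enter only as time-dependent coefficients. Fix $\omega$ outside a $\PP$-null set. Then \eqref{eq:char_eps} becomes the ordinary (time-inhomogeneous) differential equation
\begin{align*}
\frac{d}{dt}\varphi_t^\epsilon(x) = u_t^\epsilon(\varphi_t^\epsilon(x)) + b_t^\epsilon(\varphi_t^\epsilon(x)), \qquad b_t^\epsilon(x) := \sum_{k\in\N} \sigma_k(x)\, \eta_t^{\epsilon,k},
\end{align*}
coupled with $u_t^\epsilon = K\ast(\xi_0\circ(\varphi_t^\epsilon)^{-1})$. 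First I would check that, under \textbf{(A1)}, the added drift $b^\epsilon$ is well defined for a.e.\ $\omega$: since each $\eta^{\epsilon,k}$ is a continuous Ornstein--Uhlenbeck process with variance independent of $k$, the quantity $\E[\sup_{t\le T}|\eta_t^{\epsilon,k}|]$ is finite and independent of $k$; combined with $\sum_k\|\sigma_k\|_{C^1}<\infty$ (which follows from \textbf{(A1)} after controlling the lower-order norms of the zero-mean fields $\sigma_k$ by Poincaré's inequality on the torus), the series defining $b^\epsilon$ converges in $L^1(\Omega;C([0,T];C^1))$, hence a.s. Thus $b^\epsilon$ is bounded, continuous in time and Lipschitz (indeed $C^1$) in space, uniformly on $[0,T]$.

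Next I would invoke the deterministic Lagrangian theory for the 2D Euler equations with bounded vorticity, following \cite{MaPu94}, modified by the extra regular drift $b^\epsilon$. The starting point is the classical analysis of the Biot--Savart kernel: for $\xi_0\in L^\infty(\mathbb{T}^2)$ the velocity $u_t^\epsilon = K\ast(\xi_0\circ(\varphi_t^\epsilon)^{-1})$ is bounded and \emph{log-Lipschitz} in space, with constants depending only on $\|\xi_0\|_{L^\infty}$. Adding the genuinely Lipschitz field $b^\epsilon$ preserves the log-Lipschitz character of the total drift. I would then construct the flow by the standard fixed-point/iteration scheme on the space of continuous, measure-preserving flows: each iterate $\varphi^{(n)}$ produces $\xi^{(n)}=\xi_0\circ(\varphi^{(n)})^{-1}$ with $\|\xi_t^{(n)}\|_{L^\infty}=\|\xi_0\|_{L^\infty}$, so the log-Lipschitz modulus of $u^{(n)}$ stays uniform in $n$ and $t$; this uniform modulus together with Osgood's condition yields convergence of the scheme, existence of a stochastic flow of homeomorphisms solving the coupled system, and uniqueness. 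This simultaneously gives \eqref{eq:transport_eps} and \eqref{eq:char_eps} for every $t\in[0,T]$, in the sense of \autoref{def:sol}.

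For the measure-preserving property I would use that both $u_t^\epsilon$ (divergence free by the Biot--Savart law) and every $\sigma_k$ (solenoidal by assumption) are divergence free, so the total drift is divergence free; Liouville's theorem then gives that $\varphi_t^\epsilon$ preserves the Lebesgue measure for every $t$, which in turn justifies the a priori conservation $\|\xi_t^\epsilon\|_{L^\infty}=\|\xi_0\|_{L^\infty}$ used above. For a merely log-Lipschitz field this is obtained by mollifying the drift, applying the classical Liouville identity to the approximating smooth flows, and passing to the limit. Finally, for measurability in $\omega$: since the data $\eta^{\epsilon,k}$ are progressively measurable and the flow arises as the a.e.\ limit of the measurable iterates $\varphi^{(n)}$, the resulting $\varphi^\epsilon$ is a stochastic flow of homeomorphisms in the required sense, and weak progressive measurability of $\xi^\epsilon=\xi_0\circ(\varphi^\epsilon)^{-1}$ follows from the measure-preserving change of variables.

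The main obstacle I expect is the interplay between the nonlocal, nonlinear coupling through $K$ and the merely log-Lipschitz regularity of $u^\epsilon$: standard Cauchy--Lipschitz theory does not apply, so both the homeomorphism property and uniqueness must rest on the Osgood/log-Lipschitz machinery (as for the genuinely stochastic case in \cite{BrFlMa16}). The additional random drift $b^\epsilon$ is in fact the easy part, being genuinely Lipschitz; the delicate point is keeping the log-Lipschitz constants uniform in time, which requires the $L^\infty$ conservation of vorticity, i.e.\ measure-preservation. This creates a mild circularity — measure-preservation is used to bound $u^\epsilon$, yet it is itself a property of the flow being constructed — which I would resolve by arranging the iteration so that every iterate is measure-preserving by construction, making the bound $\|\xi^{(n)}_t\|_{L^\infty}=\|\xi_0\|_{L^\infty}$ hold a priori along the scheme.
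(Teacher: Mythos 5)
Your proposal is correct and follows essentially the same route as the paper: the paper omits the proof of \autoref{prop:lagrangian}, referring to the argument for \autoref{prop:well_posedness}, which is exactly the pathwise Picard iteration on the space of measure-preserving homeomorphism flows, with the log-Lipschitz Biot--Savart estimate (\autoref{lem:log_lip}) and the Osgood-type comparison (\autoref{lem:comp}) providing convergence and uniqueness, the regular random drift being the easy part. Your resolution of the apparent circularity --- building measure preservation into every iterate so that $\|\xi^{(n)}_t\|_{L^\infty}=\|\xi_0\|_{L^\infty}$ holds a priori --- is precisely the paper's device of working in the class $\mathcal{M}_T$.
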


The proof of the previous proposition is omitted, being easily reconstructed from the proof of the analogous result for the characteristics of the full system \eqref{eq:full_euler}, see \autoref{prop:well_posedness}. Thanks to this proposition, we can finally define our notion of solution.

The presumed limit equation for $\xi^{\epsilon}$ is%
\begin{align} \label{eq:xi}
	\begin{cases}
d\xi_t+u_t\cdot\nabla\xi_t dt =
-\sum_{k \in \mathbb{N}}
\sigma_{k}\cdot\nabla\xi_t\circ d\beta^{k}_t, \\
u_t  =K\ast\xi_t,\\
\xi|_{t=0} =\xi_{0},%
	\end{cases}
\end{align}
where $\circ d\beta^{k}_t$ stands for the Stratonovich integral. 
In \cite[Section 7]{BrFlMa16} it is proved that, for $C^2$ coefficients $\sigma_k$, equation \eqref{eq:xi} above admits an unique weakly progressively measurable solution given by
\begin{equation} \label{eq:transport}
\xi_t  =\xi_{0} \circ (\varphi_{t})
^{-1},
\end{equation}
as variables in $L^\infty(\mathbb{T}^2)$, where $\varphi_{t}$ is the stochastic flow of measure-preserving homeomorphisms solution to the SDE%
\begin{align} \label{eq:char}
	\begin{cases}
d\varphi_{t}(x) =
u_t\left(  \varphi_{t}(x)\right)  dt+\sum_{k}\sigma_{k}\left(  \varphi_{t}(x)\right)
\circ d\beta_{t}^{k},\\ 
\varphi_{0}\left(  x\right) = x.
	\end{cases}
\end{align}

\subsection{Reformulation of the problem}

Recall that, since both $\varphi_{t}^{\epsilon}$ and $\varphi_{t}$ are
measure-preserving maps of the torus $\mathbb{T}^2$, for every test
function $f \in L^1(\mathbb{T}^2)$ we have the following identities:%
\begin{align*}
\int_{\mathbb{T}^2}
\xi_t^{\epsilon}(x)  f\left(  x\right)  dx
&=
\int_{\mathbb{T}^2}
\xi_{0}\left(
y\right)  f\left(  \varphi_{t}^{\epsilon}\left(  y\right)  \right)  dy,
\\
\int_{\mathbb{T}^2}
\xi_t(x)  f\left(  x\right)  dx
&=
\int_{\mathbb{T}^2}
\xi_{0}\left(  y\right)
f\left(  \varphi_{t}\left(  y\right)  \right)  dy.
\end{align*}

This motivates, in view of the meaning of convergence $\xi^{\epsilon}\rightarrow\xi$ (in a suitable sense), to investigate instead the convergence of characteristics 
$\varphi^{\epsilon}\rightarrow\varphi$,
where the characteristics $\varphi^{\epsilon}$, $\varphi$ are stochastic flows of measure-preserving homeomorphisms that solve:
\begin{align*}
d\varphi_{t}^{\epsilon}\left(  x\right)  &= 
u^{\epsilon}_t\left(
\varphi_{t}^{\epsilon}\left(  x\right)  \right)dt  +\sum_{k \in \mathbb{N}} \sigma_k \left( \varphi_{t}^{\epsilon}\left(  x\right)  \right) \eta^{\epsilon,k}_t dt,
\\
d\varphi_{t}(x) &=
u_t\left(  \varphi_{t}(x)\right)  dt+\sum_{k \in \mathbb{N}}\sigma_{k}\left(  \varphi_{t}(x)\right)
\circ d\beta_{t}^{k},
\end{align*}
keeping in mind, however, that $u^{\epsilon}$ and $u$ are not given functions,
but they depend on the other variables, in partcular they are random.
Indeed, we do not know a propri that $
u^{\epsilon}\rightarrow u$
in some sense, but this information is part of the problem (cfr. \autoref{cor:vel}).

\subsection{Properties of the Biot-Savart kernel}
Here we briefly recall some useful properties of the Biot-Savart kernel $K$. We refer to \cite{BrFlMa16} for details and proofs.

First of all, recall that the convolution $K \ast \xi$ is well-defined for every $\xi \in L^p(\mathbb{T}^2)$, $p \in [1,\infty]$ and the following estimate holds: for every $p \in [1,\infty]$ there exists a constant $C$ such that for every $\xi \in L^p(\mathbb{T}^2)$
\begin{align*}
\| K \ast \xi \|_{L^p(\mathbb{T}^2,\mathbb{R}^2)}
\leq C
\| \xi \|_{L^p(\mathbb{T}^2)}.
\end{align*}

For $p \in (1,\infty)$ and $\xi \in L^p_0(\mathbb{T}^2)$, the convolution with $K$ actually represents the Biot-Savart operator:
\begin{align*}
K \ast \xi = -\nabla^\perp (-\Delta)^{-1} \xi,
\end{align*}
which, to every $\xi \in L^p_0(\mathbb{T}^2)$, associates the unique zero-mean, divergence-free
velocity vector field $u \in W^{1,p}(\mathbb{T}^2,\mathbb{R}^2)$ such that
\begin{align*}
\mbox{curl}\,u = \xi.
\end{align*}
Moreover, for every $p \in (1,\infty)$ there exist constants $c$, $C$ such that for every $\xi \in L^p_0(\mathbb{T}^2)$
\begin{align*}
c
\| \xi \|_{L^p(\mathbb{T}^2)} 
\leq 
\| K \ast \xi \|_{W^{1,p}(\mathbb{T}^2,\mathbb{R}^2)}
\leq C
\| \xi \|_{L^p(\mathbb{T}^2)}.
\end{align*}

Let $r \geq 0$. Denote $\gamma$ the concave function:
\begin{equation*}
\gamma(r) =
r(1-\log r) \mathbf{1}_{\{0<r<1/e\}} + 
(r+1/e)		\mathbf{1}_{\{r\geq 1/e\}}.
\end{equation*}
The following two lemmas are proved in \cite{BrFlMa16}.

\begin{lem} \label{lem:log_lip}
There exists a positive constant $C$ such that:
\begin{equation*}
\int_{\mathbb{T}^2}
\left| K(x-y) - K(x'-y)\right| dy
\leq
C \gamma(|x-x'|)
\end{equation*}
for every $x,x' \in \mathbb{T}^2$.
\end{lem}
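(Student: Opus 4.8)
\medskip

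The plan is to exploit the explicit near-diagonal behaviour of the Biot-Savart kernel on the torus, namely that $K(z)$ behaves like $z^\perp/(2\pi|z|^2)$ as $z \to 0$, so that $|K(z)| \sim C/|z|$ while $|\nabla K(z)| \sim C/|z|^2$. The integral $\int_{\mathbb{T}^2}|K(x-y)-K(x'-y)|\,dy$ is therefore an integral of a singular difference, and the point is that the $1/|z|$ singularity is integrable in two dimensions but the $1/|z|^2$ bound coming from the gradient is not — this borderline logarithmic divergence is exactly what produces the $\gamma$ (i.e. $r\log(1/r)$) modulus of continuity rather than Lipschitz continuity. Write $\delta = |x-x'|$ and split the domain of integration into the region near the two singular points and the region away from them.

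\medskip

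Concretely, I would set $\delta = |x-x'|$ and split $\int_{\mathbb{T}^2}$ into the ball $B = \{y : |x-y| \le 2\delta \text{ or } |x'-y| \le 2\delta\}$ and its complement $B^c$. On $B^c$ both arguments $x-y$ and $x'-y$ stay at distance $\gtrsim \delta$ from the origin, so I apply the mean value theorem to $K$ along the segment from $x$ to $x'$:
\begin{align*}
|K(x-y)-K(x'-y)| \le |x-x'| \sup_{z \in [x,x']} |\nabla K(z-y)| \le \frac{C\,\delta}{|x-y|^2},
\end{align*}
using that $|z-y| \gtrsim |x-y|$ on this region. Integrating $\int_{\delta \lesssim |x-y| \lesssim 1} \delta/|x-y|^2\,dy$ in polar coordinates gives $C\delta \int_\delta^{C} r^{-1}\,dr = C\delta\log(1/\delta)$, which is precisely $C\gamma(\delta)$ for small $\delta$. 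On the ball $B$, whose area is $O(\delta^2)$, I instead use the triangle inequality and the integrability of each singular term separately:
\begin{align*}
\int_B |K(x-y)-K(x'-y)|\,dy \le \int_B |K(x-y)|\,dy + \int_B |K(x'-y)|\,dy \le C\delta,
\end{align*}
since $\int_{|z|\le C\delta}|z|^{-1}\,dz = C\delta$. Adding the two contributions yields the bound $C\gamma(\delta)$ for $\delta$ small, while for $\delta \ge 1/e$ the estimate is trivial because the left-hand side is bounded uniformly (by $2\|K\|_{L^1}$) and $\gamma(r) \ge 1/e$ there; this matches the second branch in the definition of $\gamma$.

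\medskip

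The main obstacle is justifying the pointwise kernel bounds $|K(z)| \le C/|z|$ and $|\nabla K(z)| \le C/|z|^2$ on the torus and controlling the segment $[x,x']$ uniformly. Off the diagonal the periodic kernel $K = -\nabla^\perp(-\Delta)^{-1}$ is smooth, and near the diagonal it differs from the whole-plane kernel $z^\perp/(2\pi|z|^2)$ only by a smooth (bounded, with bounded derivatives) correction, so the required estimates follow from the analogous classical bounds on $\mathbb{R}^2$; one should also take $\delta$ small enough (say $\delta < 1/e$) that the segment from $x$ to $x'$ together with the balls of radius $2\delta$ stays within a single fundamental domain, so that the periodic geometry does not interfere. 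Since the lemma is attributed to \cite{BrFlMa16}, I would either cite those estimates directly or reprove them via the decomposition of $K$ into its singular and smooth parts.
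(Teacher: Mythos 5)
Your argument is correct and is essentially the classical proof of the log-Lipschitz estimate for the Biot--Savart kernel: the near/far splitting at scale $\delta=|x-x'|$, the bound $|K(z)|\lesssim |z|^{-1}$ on the small ball giving a contribution $O(\delta)$, and the mean value theorem with $|\nabla K(z)|\lesssim |z|^{-2}$ on the complement giving the borderline $\delta\log(1/\delta)$ term, which together reproduce $\gamma(\delta)=\delta(1-\log\delta)$. The paper does not prove this lemma itself but cites \cite{BrFlMa16}, where the proof is the same standard decomposition you describe (going back to Marchioro--Pulvirenti), so there is nothing to correct; your care with the periodic geometry and the reduction to the planar kernel plus a smooth correction is exactly the right way to make the torus case rigorous.
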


\begin{lem} \label{lem:comp}
Fix $T>0$ and let $\lambda>0$, $z_0 \in [0,\exp(1-2e^{\lambda T})]$ be constants.
Denote $z$ the unique solution of the following ODE:
\begin{align*}
z_t = z_0 + \lambda  \int_0^t \gamma(z_s) ds.
\end{align*}
Then for every $t \in [0,T]$ the following estimate holds:
\begin{align*}
z_t \leq e z_0^{\exp(-\lambda t)}.
\end{align*}
\end{lem}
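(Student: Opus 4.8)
The plan is to solve the integral equation essentially explicitly in the only regime that matters, and to show that the smallness hypothesis on $z_0$ forces the solution to remain in that regime throughout $[0,T]$. First, if $z_0=0$ then $z\equiv 0$ solves the equation, since $\gamma(0)=0$, and the claimed bound reads $0\le 0$; so I may assume $z_0>0$. Next I would observe that the hypothesis gives $z_0\le\exp(1-2e^{\lambda T})<1/e$, because $e^{\lambda T}>1$ forces the exponent $1-2e^{\lambda T}<-1$. Hence, at least for $t$ near $0$, the solution lies in the region $\{0<r<1/e\}$, where $\gamma(r)=r(1-\log r)$.

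The key computation is that, as long as $0<z_t<1/e$, the equation reads $\dot z_t=\lambda z_t(1-\log z_t)$, and the substitution $w_t=\log z_t$ linearises it into $\dot w_t=\lambda(1-w_t)$. This is solved explicitly by $w_t=1-(1-\log z_0)e^{-\lambda t}$, so the candidate solution is $\bar z_t:=\exp\bigl(1-(1-\log z_0)e^{-\lambda t}\bigr)$, which rearranges to $\bar z_t=e\,z_0^{\exp(-\lambda t)}\exp(-\exp(-\lambda t))$.

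It then remains to check that $\bar z$ is genuinely the solution $z$ on the whole interval $[0,T]$. A direct inspection shows that $\bar z_t\le 1/e$ is equivalent to $z_0\le\exp(1-2e^{\lambda t})$, and since $t\mapsto\exp(1-2e^{\lambda t})$ is decreasing we have $\exp(1-2e^{\lambda t})\ge\exp(1-2e^{\lambda T})\ge z_0$ for every $t\in[0,T]$. Therefore $\bar z_t\le 1/e$ on all of $[0,T]$, so that $\gamma(\bar z_t)=\bar z_t(1-\log\bar z_t)$ there (recall $\gamma(r)=r(1-\log r)$ on all of $(0,1/e]$, by continuity of $\gamma$ at the threshold $1/e$), and consequently $\bar z$ satisfies the same integral equation as $z$ on $[0,T]$. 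By the assumed uniqueness one concludes $z_t=\bar z_t$ for $t\in[0,T]$. Finally, dropping the factor $\exp(-\exp(-\lambda t))\le 1$ yields $z_t\le e\,z_0^{\exp(-\lambda t)}$, as claimed.

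The only genuine obstacle is this confinement argument: one must ensure that the trajectory never reaches the threshold $1/e$, beyond which $\gamma$ changes form and the clean logarithmic linearisation breaks down. This is exactly what the seemingly opaque assumption $z_0\le\exp(1-2e^{\lambda T})$ is calibrated to provide, and the monotonicity of $t\mapsto\exp(1-2e^{\lambda t})$ promotes the endpoint condition at $T$ into a condition valid at every intermediate $t$. If one prefers not to invoke uniqueness, the same conclusion follows from a standard continuation argument: setting $t^\ast=\sup\{t\le T:z_s<1/e\ \text{for all}\ s\le t\}$, the explicit formula is valid on $[0,t^\ast)$ and shows there that $z_t$ stays strictly below $1/e$, which by continuity forces $t^\ast=T$.
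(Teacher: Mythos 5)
Your proof is correct: the logarithmic substitution $w=\log z$ linearises the equation in the regime $0<z\le 1/e$, the hypothesis $z_0\le\exp(1-2e^{\lambda T})$ is exactly what confines the trajectory to that regime on all of $[0,T]$, and dropping the factor $\exp(-\exp(-\lambda t))\le 1$ gives the stated bound. The paper itself does not prove this lemma but defers to \cite{BrFlMa16}, where the argument is essentially the one you give, so there is nothing further to compare.
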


Hereafter, the symbol $\lesssim$ will be used to indicate an inequality up to a multiplicative constant $C$ which depends only of the data of the problem (\emph{e.g.} $T$, $\xi_0$, $\theta_k$ etc.). However, for the sake of clarity, we always try to show in the calculations where assumption (A1) comes into play.

\section{Convergence of characteristics} \label{sec:conv_char}
For a given $y \in \mathbb{T}^2$, denote $|y|$ the geodesic distance on the flat two dimensional torus of the point $y$ from $(0,0) \in \mathbb{T}^2$.
To keep the notation as simple as possible, we define, for a measurable map $\varphi$ from $\mathbb{T}^2 $ to itself, the following quantity:
\begin{align*}
\left\| \varphi \right\|_{L^1(\mathbb{T}^2,\mathbb{T}^2)}
=
\int_{\mathbb{T}^2}
\left| \varphi(x) \right|dx.
\end{align*}
We adopt this notation because of the similarity with the norm of the Banach space $L^1(\mathbb{T}^2,\mathbb{R}^2)$, altough $\left\| \cdot \right\|_{L^1(\mathbb{T}^2,\mathbb{T}^2)}$ is not a norm on the space of measurable maps $\mathbb{T}^2 \to \mathbb{T}^2$, in particular it is not positively homogeneus.
In a similar fashion, we define $\left\| \cdot \right\|_{L^\infty(\mathbb{T}^2,\mathbb{T}^2)}$ as
\begin{align*}
\left\| \varphi \right\|_{L^\infty(\mathbb{T}^2,\mathbb{T}^2)}
=
\mbox{ess}\sup_{x \in \mathbb{T}^2}
\left| \varphi(x) \right|.
\end{align*}

In this section we prove the following result,
concerning convergence of the characteristics $\varphi^\epsilon$ of the simplified system \eqref{eq:xi_eps} towards the characteristics $\varphi$ of system \eqref{eq:xi}.
 
\begin{prop} \label{prop:char}
Assume (A1). Let $\varphi^\epsilon$
be the solution of \eqref{eq:char_eps} and let $\varphi$
be the solution of \eqref{eq:char}.
Then, for every $T>0$, the following convergence holds as $\epsilon \to 0$:
\begin{align} \label{eq:conv_char}
\mathbb{E} \left[ \sup_{s \leq T} \left\| 
\varphi^\epsilon_{s}-\varphi_s \right\|_{L^1(\mathbb{T}^2,\mathbb{T}^2)}\right] \to 0.
\end{align}
\end{prop}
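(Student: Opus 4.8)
The plan is to compare the two flows directly and to run an Osgood-type (log-Lipschitz) Gronwall argument on $g(t) := \mathbb{E}\big[\sup_{s\le t}\|\varphi^\epsilon_s-\varphi_s\|_{L^1(\mathbb{T}^2,\mathbb{T}^2)}\big]$, after recasting the fast, smooth-in-time driving of $\varphi^\epsilon$ as an It\^o equation that agrees, up to a vanishing error, with the It\^o form of \eqref{eq:char}. Set $R^\epsilon_t(x)=\varphi^\epsilon_t(x)-\varphi_t(x)$; its time evolution splits into a \emph{drift difference} stemming from $u^\epsilon_t(\varphi^\epsilon_t)-u_t(\varphi_t)$ and a \emph{noise difference} stemming from $\sum_k\sigma_k(\varphi^\epsilon)\eta^{\epsilon,k}\,dt$ versus $\sum_k\sigma_k(\varphi)\circ d\beta^k$. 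The whole point is that, in the limit, the first transport term should reproduce the second.

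The crucial step is a Wong--Zakai identification of the noise term, in the spirit of \cite{IkWa14}. From the Ornstein--Uhlenbeck dynamics one has $\eta^{\epsilon,k}_t\,dt=-\epsilon^2\,d\eta^{\epsilon,k}_t+d\beta^k_t$, so that an integration by parts turns $\int_0^t\sigma_k(\varphi^\epsilon_s)\eta^{\epsilon,k}_s\,ds$ into the genuine It\^o integral $\int_0^t\sigma_k(\varphi^\epsilon_s)\,d\beta^k_s$, plus boundary terms of order $\epsilon$ (harmless, since $\epsilon^2\eta^{\epsilon,k}$ is $O(\epsilon)$ by stationarity), plus the quadratic corrector $\epsilon^2\sum_j\int_0^t\eta^{\epsilon,k}_s\eta^{\epsilon,j}_s\,D\sigma_k(\varphi^\epsilon_s)\sigma_j(\varphi^\epsilon_s)\,ds$ (recall that $\varphi^\epsilon$ solves a random ODE, so ordinary calculus applies and $d[\sigma_k(\varphi^\epsilon_s)]=D\sigma_k(\varphi^\epsilon_s)\dot\varphi^\epsilon_s\,ds$). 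Since the processes $\{\eta^{\epsilon,k}\}$ are independent, stationary, with $\mathbb{E}[(\eta^{\epsilon,k}_s)^2]=\epsilon^{-2}/2$ and correlation time $\epsilon^2$, the off-diagonal ($j\neq k$) terms average to zero and the diagonal one converges to $\tfrac12\int_0^t(\sigma_k\cdot\nabla)\sigma_k(\varphi^\epsilon_s)\,ds$; summed over $k$, this is exactly the It\^o--Stratonovich corrector promoting $\sum_k\int_0^t\sigma_k(\varphi_s)\,d\beta^k_s$ to $\sum_k\int_0^t\sigma_k(\varphi_s)\circ d\beta^k_s$. I would make this quantitative by writing $\epsilon^2(\eta^{\epsilon,k}_s)^2=\tfrac12+\big[\epsilon^2(\eta^{\epsilon,k}_s)^2-\tfrac12\big]$ and bounding the time integral of the centred fast fluctuation via a mixing estimate, the series in $k$ converging under (A1).

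With this, $\varphi^\epsilon$ satisfies in It\^o form $d\varphi^\epsilon_t=u^\epsilon_t(\varphi^\epsilon_t)\,dt+\tfrac12\sum_k(\sigma_k\cdot\nabla)\sigma_k(\varphi^\epsilon_t)\,dt+\sum_k\sigma_k(\varphi^\epsilon_t)\,d\beta^k_t+dE^\epsilon_t$ with $\mathbb{E}\big[\sup_{s\le T}|E^\epsilon_s|\big]\to0$, matching the It\^o form of \eqref{eq:char}. Integrating $|R^\epsilon_t(x)|$ over $x\in\mathbb{T}^2$ and using that both flows are measure-preserving, the drift differences are estimated as follows: $u^\epsilon_t(\varphi^\epsilon_t)-u^\epsilon_t(\varphi_t)$ and the corrector difference are controlled by the log-Lipschitz bound of \autoref{lem:log_lip} and by the $C^2$-regularity of the $\sigma_k$ (so that $(\sigma_k\cdot\nabla)\sigma_k$ is Lipschitz), whereas the delicate term $u^\epsilon_t(\varphi_t)-u_t(\varphi_t)=\big(K\ast(\xi^\epsilon_t-\xi_t)\big)(\varphi_t)$ is rewritten, via $\xi^\epsilon_t=\xi_0\circ(\varphi^\epsilon_t)^{-1}$ and the change of variables afforded by measure preservation, as $\int_{\mathbb{T}^2}\big[K(\,\cdot-\varphi^\epsilon_t(w))-K(\,\cdot-\varphi_t(w))\big]\xi_0(w)\,dw$, whose $L^1$ norm is again bounded by $\int_{\mathbb{T}^2}\gamma(|R^\epsilon_t(w)|)\,dw$ through \autoref{lem:log_lip}. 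The martingale increment $\sum_k[\sigma_k(\varphi^\epsilon)-\sigma_k(\varphi)]\,d\beta^k$ is handled by the Burkholder--Davis--Gundy inequality together with the Lipschitz bound on the $\sigma_k$.

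Collecting these bounds and using the concavity of $\gamma$ with Jensen's inequality to move both the spatial average and the expectation inside $\gamma$, I obtain a closed inequality $g(t)\le\delta(\epsilon)+\lambda\int_0^t\gamma(g(s))\,ds$ with $\delta(\epsilon)\to0$, to which the comparison \autoref{lem:comp} applies and forces $g(T)\to0$, that is \eqref{eq:conv_char}. I expect the main obstacle to be the rigorous identification of the Stratonovich corrector in the second step: because $\varphi^\epsilon$ is itself driven by the fast noise, the centred fluctuations $\epsilon^2(\eta^{\epsilon,k}_s)^2-\tfrac12$ are correlated with $\sigma_k(\varphi^\epsilon_s)$, and controlling these correlations uniformly in $x$ and summably in $k$ under (A1)---while simultaneously verifying that the boundary terms and the cross term between $u^\epsilon$ and the fast noise are negligible---is the technically demanding part; by comparison, the log-Lipschitz Gronwall closure is by now standard 2D-Euler machinery of Yudovich type.
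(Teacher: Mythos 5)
Your overall architecture coincides with the paper's: both routes reduce \eqref{eq:conv_char} to an Osgood--Gronwall closure through \autoref{lem:log_lip}, Jensen's inequality for the concave $\gamma$, and the comparison \autoref{lem:comp}; both treat the Biot--Savart drift by the same measure-preserving change of variables; and both identify the limit noise by showing that the quadratic Ornstein--Uhlenbeck term produces the Stratonovich corrector $\frac12\sum_k(\sigma_k\cdot\nabla)\sigma_k$. The gap is in the step you yourself flag as the obstacle, and it is not a technicality that a generic ``mixing estimate'' fills in --- it is the core of the proof. After your integration by parts you must show that
\begin{align*}
\epsilon^2\int_0^t\Big(\eta^{\epsilon,k}_s\eta^{\epsilon,j}_s-\delta_{jk}\,\tfrac{\epsilon^{-2}}{2}\Big)\,D\sigma_k(\varphi^\epsilon_s)\,\sigma_j(\varphi^\epsilon_s)\,ds
\end{align*}
is negligible. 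Centring alone does not help: the fluctuation $\epsilon^2\eta^{\epsilon,k}_s\eta^{\epsilon,j}_s-\delta_{jk}/2$ and the coefficient $D\sigma_k(\varphi^\epsilon_s)\sigma_j(\varphi^\epsilon_s)$ are both $\mathcal{F}_s$-measurable and genuinely correlated, so the expectation of their product has no reason to vanish, and the fluctuation itself is of order one in $L^2$ for each fixed $s$; smallness can only come from decorrelation in time, which a pointwise decomposition into ``mean plus centred part'' does not capture. Some decorrelation device is mandatory, and you have not supplied one. The same objection applies to your assertion that the off-diagonal terms ``average to zero''.

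The paper's device is the Nakao block decomposition of \autoref{ssec:nakao}: choose a mesh $\delta_\epsilon$ with $\delta_\epsilon/\epsilon^2\to\infty$ and $\delta_\epsilon^2/\epsilon^3\to0$, freeze the coefficient at the left endpoint of each block --- in fact at $\varphi_{n\delta_\epsilon}$, the limiting flow, so that it is $\mathcal{F}_{n\delta_\epsilon}$-measurable --- compute the conditional expectation of the iterated integral $c^n_{h,k}=\int_{n\delta_\epsilon}^{(n+1)\delta_\epsilon}\big(\int_{n\delta_\epsilon}^{s}\eta^{\epsilon,h}_r\,dr\big)\eta^{\epsilon,k}_s\,ds$ exactly (it equals $\delta_{h,k}\delta_\epsilon/2$ up to errors of order $\epsilon^2$ and $\epsilon^4|\eta^{\epsilon}_{n\delta_\epsilon}|^2$, see \eqref{eq:cond_exp}), and control the sum of the centred residuals $c^n_{h,k}-\E\left[ c^n_{h,k}\mid\mathcal{F}_{n\delta_\epsilon}\right]$ as an $L^2$ martingale via Doob's maximal inequality (\autoref{lem:nakao}). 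The price is the freezing error, which is where the increment estimates of \autoref{lem:phi_eps_incr}, \autoref{lem:phi_eps_incr_bis} and \autoref{lem:phi_incr} and the upper constraint $\delta_\epsilon^2/\epsilon^3\to0$ enter. If you incorporate such a block argument (or an equivalent corrector/Poisson-equation device), the rest of your outline --- boundary terms of order $\epsilon$, the cross term with $u^\epsilon$, the Burkholder--Davis--Gundy estimate for the martingale difference, and the log-Lipschitz closure --- matches the paper and is sound.
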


The strategy of the proof is the following, and is taken from \cite{IkWa14}{,  see also \cite{AsFlPa20+}}.
The idea is to discretize the time interval $[0,T]$ into subintervals of the form $[n\delta_\epsilon,(n+1)\delta_\epsilon]$, for a suitable choice of the mesh $\delta_\epsilon$.
Then, we adapt an argument in \cite{IkWa14} that gives a control of the noisy part of the equations for the characteristics $\varphi^\epsilon$, $\varphi$ in the regime
$\delta_\epsilon^2/\epsilon^3 \to 0$, $\delta_\epsilon/\epsilon^2 \to \infty$.
The nonlinear drift is controlled by \autoref{lem:log_lip}. Finally, \autoref{lem:comp} gives the convergence \eqref{eq:conv_char}.

\subsection{Estimates on the increments}
In this paragraph we give some preliminary estimates on the increments of the characteristics $\varphi^\epsilon$, $\varphi$.
We will make use of the following lemma on the supremum of the Ornstein-Uhlenbeck process, which can be found in \cite{JiZh20}.

\begin{lem} \label{lem:OU}
Let $T>0$, $p\geq 1$. Then for every $k \in \mathbb{N}$: 
\begin{align*}
\mathbb{E} \left[ 
\sup_{s\in[0,T]} |\eta^{\epsilon,k}_s|^p
\right]
\lesssim
\epsilon^{-p} 
{ \log^{p/2}(1+\epsilon^{-2})}.
\end{align*}
\end{lem}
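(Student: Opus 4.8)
The plan is to prove the moment bound on the supremum of the stationary Ornstein--Uhlenbeck process $\eta^{\epsilon,k}$ directly from its explicit representation, reducing the problem to a known maximal inequality for Gaussian processes. Since the estimate is uniform in $k$ and the bound is stated with $\lesssim$, I may fix $k$ and suppress it from the notation. Writing $\lambda = \epsilon^{-2}$, the stationary solution is the centred Gaussian process
\begin{align*}
\eta^{\epsilon}_t = \lambda \int_{-\infty}^t e^{-\lambda(t-s)} \, d\beta_s,
\end{align*}
with $\beta$ a two-sided Brownian motion, whose variance is the constant $\lambda^2 \int_{-\infty}^t e^{-2\lambda(t-s)}\,ds = \lambda/2 = \epsilon^{-2}/2$, matching the prescribed initial distribution. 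The key structural fact I would exploit is that, after the time rescaling $t = \lambda^{-1} \tau$, the process $\lambda^{-1/2}\eta^{\epsilon}_{\lambda^{-1}\tau}$ is a \emph{standard, variance-one} stationary Ornstein--Uhlenbeck process independent of $\epsilon$. Consequently
\begin{align*}
\mathbb{E}\left[\sup_{s\in[0,T]} |\eta^{\epsilon}_s|^p\right]
=
\lambda^{p/2}\,
\mathbb{E}\left[\sup_{\tau\in[0,\lambda T]} |Y_\tau|^p\right],
\end{align*}
where $Y$ is the standard stationary OU process. Since $\lambda^{p/2} = \epsilon^{-p}$, the whole problem is reduced to showing that the supremum of $|Y|$ over a window of length $\lambda T = \epsilon^{-2}T$ grows only logarithmically, namely $\mathbb{E}[\sup_{\tau\le L}|Y_\tau|^p] \lesssim \log^{p/2}(1+L)$ with $L = \epsilon^{-2}T$.

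The logarithmic growth of the maximum of a stationary Gaussian process over a long interval is classical and is precisely the content of the cited reference \cite{JiZh20}; the first step is therefore to invoke it, but I would also indicate the mechanism so the bound is self-contained. The standard OU process $Y$ has unit pointwise variance and is Lipschitz in $L^2$ on bounded scales, with exponentially decaying covariance $\mathbb{E}[Y_0 Y_\tau] = e^{-|\tau|}$. A Dudley/chaining argument, or equivalently a union bound over the $\lfloor L\rfloor$ unit subintervals combined with the Borell--TIS concentration inequality applied on each subinterval, yields $\mathbb{E}[\sup_{\tau\le L}|Y_\tau|] \lesssim \sqrt{\log(1+L)}$ and, by the same concentration estimate controlling all higher moments through the Gaussian tail, $\mathbb{E}[\sup_{\tau\le L}|Y_\tau|^p] \lesssim \log^{p/2}(1+L)$ for every $p\ge 1$. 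Substituting $L = \epsilon^{-2} T$ gives $\log^{p/2}(1+\epsilon^{-2}T) \lesssim \log^{p/2}(1+\epsilon^{-2})$, where the implicit constant absorbs the fixed horizon $T$; this last comparison uses only that $T$ is a fixed positive constant and $\epsilon$ is small.

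Collecting the two displays, one obtains
\begin{align*}
\mathbb{E}\left[\sup_{s\in[0,T]} |\eta^{\epsilon}_s|^p\right]
\lesssim
\epsilon^{-p}\,\log^{p/2}(1+\epsilon^{-2}),
\end{align*}
which is the claim, and the argument is manifestly uniform in $k$ because every standard OU process arising this way has the identical law regardless of $k$. The main obstacle is not the rescaling, which is routine, but ensuring that the logarithmic maximal bound for the standard stationary Gaussian process is quoted or proved in exactly the form needed, with the correct $\log^{p/2}$ power for all $p$ rather than just $p=1$ or $p=2$; the clean way to secure all moments simultaneously is to pass through the Gaussian concentration inequality (Borell--TIS), since it controls the upper tail of $\sup|Y|$ by a Gaussian tail centred at its mean, from which every $L^p$ norm of the supremum inherits the same $\sqrt{\log}$ scaling up to $p$-dependent constants. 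I would also take care, when handling the two-sided versus one-sided construction and the stationarity of the given initial condition, to confirm that the prescribed initial variance $\epsilon^{-2}/2$ is exactly the stationary variance, so that no transient boundary term in $t=0$ spoils the uniform-in-time estimate.
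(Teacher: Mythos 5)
Your proposal is correct, but it takes a genuinely different route from the paper: the paper does not prove \autoref{lem:OU} at all, it simply quotes it from the reference \cite{JiZh20}, whose moderate maximal inequalities for the Ornstein--Uhlenbeck process $dX_t=-\lambda X_t\,dt+\sigma\,d\beta_t$ give $\mathbb{E}[\sup_{s\le T}|X_s|^p]\lesssim (\sigma/\sqrt{\lambda})^p\log^{p/2}(1+\lambda T)$, which with $\lambda=\sigma=\epsilon^{-2}$ is exactly the stated bound. You instead re-derive the upper bound from scratch: the scaling reduction $Y_\tau=c\,\lambda^{-1/2}\eta^{\epsilon}_{\lambda^{-1}\tau}$ to a fixed standard stationary OU process (correct, with the harmless caveat that your normalization gives pointwise variance $1/2$ unless you rescale by $\sqrt{2/\lambda}$), followed by a union bound over unit subintervals plus Borell--TIS (or Dudley chaining) to get $\mathbb{E}[\sup_{\tau\le L}|Y_\tau|^p]\lesssim_p\log^{p/2}(1+L)$, and finally the absorption of the fixed horizon $T$ into the constant. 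Each of these steps is sound; your observation that the stationary tail controls all moments $p\ge1$ simultaneously through Gaussian concentration is the right mechanism, and your check that the prescribed initial variance $\epsilon^{-2}/2$ is exactly the stationary one is precisely what makes the restriction of the two-sided stationary process to $[0,T]$ match the paper's definition in law. What the citation buys the paper is brevity and sharp two-sided constants; what your argument buys is self-containedness and transparency about where the $\log^{p/2}$ comes from (the $\epsilon^{-2}T/\,$unit-interval count entering a Gaussian tail). One point worth making explicit in either approach: the inequality as stated can only hold for $\epsilon$ bounded above (for large $\epsilon$ the left side is at least $\mathbb{E}|\eta^{\epsilon,k}_0|^p\gtrsim\epsilon^{-p}$ while the right side is $o(\epsilon^{-p})$), which is consistent with the paper's remark that only the regime $\epsilon\to0$ matters and with your own closing caveat; also, your phrase ``Lipschitz in $L^2$'' should read that the \emph{squared} $L^2$ increment is Lipschitz, i.e. $\mathbb{E}[(Y_\tau-Y_{\tau'})^2]=2(1-e^{-|\tau-\tau'|})\le 2|\tau-\tau'|$, which is what makes the entropy integral converge.
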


{ Hereafter, we absorb every factor $\log(1+\epsilon^{-2})$ coming from \autoref{lem:OU} in the symbol $\lesssim$. Since we are only interested in the limit $\epsilon \to 0$, the reader can readily check that doing so does not affect the correctness of our next computations.}

The first lemma we prove is the following: it permits to control small-time excursions of the characteristics $\varphi^\epsilon$, in terms of the time increment $\Delta$ and the parameter $\epsilon$.
\begin{lem} \label{lem:phi_eps_incr}
Let $T>0$, $p\geq 1$, $\Delta>0$. Then
\begin{equation*}
\mathbb{E} \left[ 
\sup_{\substack{t+\delta \leq T \\ \delta \leq \Delta}}
\| \varphi^\epsilon_{t+\delta} - \varphi^\epsilon_t \|^p_{L^{\infty}(\mathbb{T}^2,\mathbb{T}^2)}\right] \lesssim
\frac{\Delta^p}{\epsilon^p}.
\end{equation*}
\end{lem}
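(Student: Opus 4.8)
The plan is to exploit the fact that the characteristic equation \eqref{eq:char_eps} contains no genuine stochastic integral: the randomness enters only through the Ornstein--Uhlenbeck drift $\eta^{\epsilon,k}_t\,dt$, so every bound can be made pathwise and no martingale machinery is needed. Writing the increment in integral form and bounding the geodesic distance on $\mathbb{T}^2$ by the arclength of the flow trajectory $s \mapsto \varphi^\epsilon_s(x)$, for fixed $x$ and $t+\delta\leq T$, $\delta\leq\Delta$ one has
\begin{align*}
|\varphi^\epsilon_{t+\delta}(x)-\varphi^\epsilon_t(x)|
\leq
\int_t^{t+\delta}|u^\epsilon_s(\varphi^\epsilon_s(x))|\,ds
+\sum_{k\in\mathbb{N}}\int_t^{t+\delta}|\sigma_k(\varphi^\epsilon_s(x))|\,|\eta^{\epsilon,k}_s|\,ds.
\end{align*}

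First I would control the drift. By the $L^\infty$ bound on the Biot--Savart kernel recalled in \autoref{sec:not}, $\|u^\epsilon_s\|_{L^\infty}\leq C\|\xi^\epsilon_s\|_{L^\infty}$, and since $\xi^\epsilon_s=\xi_0\circ(\varphi^\epsilon_s)^{-1}$ with $\varphi^\epsilon_s$ measure-preserving (\autoref{prop:lagrangian}), one has $\|\xi^\epsilon_s\|_{L^\infty}=\|\xi_0\|_{L^\infty}$; hence the drift integral is at most $C\|\xi_0\|_{L^\infty}\delta\leq C\|\xi_0\|_{L^\infty}\Delta$. For the noise term, bounding $|\sigma_k(\cdot)|\leq\|\sigma_k\|_{L^\infty}$ and $\int_t^{t+\delta}|\eta^{\epsilon,k}_s|\,ds\leq\Delta\sup_{s\leq T}|\eta^{\epsilon,k}_s|$ produces a bound that is uniform in $x$, $t$ and $\delta$, so the supremum over them is taken for free:
\begin{align*}
\sup_{\substack{t+\delta\leq T\\ \delta\leq\Delta}}\|\varphi^\epsilon_{t+\delta}-\varphi^\epsilon_t\|_{L^\infty(\mathbb{T}^2,\mathbb{T}^2)}
\lesssim
\Delta+\Delta\sum_{k\in\mathbb{N}}\|\sigma_k\|_{L^\infty}\sup_{s\leq T}|\eta^{\epsilon,k}_s|.
\end{align*}

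It then remains to take the $L^p(\Omega)$ norm. I would apply Minkowski's inequality to pass the norm through the sum over $k$, and then \autoref{lem:OU} to obtain $\|\sup_{s\leq T}|\eta^{\epsilon,k}_s|\|_{L^p(\Omega)}\lesssim\epsilon^{-1}$, absorbing the logarithmic factor as agreed. This yields
\begin{align*}
\Big\|\sum_{k\in\mathbb{N}}\|\sigma_k\|_{L^\infty}\sup_{s\leq T}|\eta^{\epsilon,k}_s|\Big\|_{L^p(\Omega)}
\lesssim
\epsilon^{-1}\sum_{k\in\mathbb{N}}\|\sigma_k\|_{L^\infty}.
\end{align*}
Raising to the $p$-th power and using $\Delta\leq\Delta/\epsilon$ in the regime $\epsilon\to0$ then gives the claimed $\lesssim\Delta^p/\epsilon^p$.

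The main point to secure is the summability $\sum_k\|\sigma_k\|_{L^\infty}<\infty$, which is not (A1) verbatim but follows from it: since $\sigma_k=K\ast\theta_k$ is zero-mean and periodic, the Poincaré inequality on the torus gives $\|\sigma_k\|_{L^\infty}\lesssim\|\nabla\sigma_k\|_{L^\infty}\lesssim\|\nabla^2\sigma_k\|_{L^\infty}$ (each gradient being automatically zero-mean by periodicity), so $\sum_k\|\sigma_k\|_{L^\infty}\lesssim\sum_k\|\nabla^2\sigma_k\|_{L^\infty}<\infty$. The only other delicate step is that the estimate of \autoref{lem:OU} must hold \emph{uniformly} in $k$, so that Minkowski's inequality produces the convergent series above rather than an uncontrolled sum; this is guaranteed because the processes $\eta^{\epsilon,k}$ are identically distributed, and it also justifies the a.s.\ interchange of the sum and the time integral in the very first display.
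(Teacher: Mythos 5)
Your proof is correct and follows essentially the same route as the paper's: write the increment in integral form, bound the drift by $\|\xi_0\|_{L^\infty(\mathbb{T}^2)}$ via the measure-preserving property, bound the noise term by $\Delta\sum_k\|\sigma_k\|_{L^\infty}\sup_{s\leq T}|\eta^{\epsilon,k}_s|$, and conclude with \autoref{lem:OU}. Your extra justification that $\sum_k\|\sigma_k\|_{L^\infty}<\infty$ follows from (A1) via Poincar\'e on the torus is a point the paper uses implicitly without comment, so including it is a welcome refinement rather than a deviation.
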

\begin{proof}
The increment $\varphi^\epsilon_{t+\delta}(x) - \varphi^\epsilon_{t}(x)$ can be written as
\begin{align*}
\varphi^\epsilon_{t+\delta}(x) - \varphi^\epsilon_{t}(x) 
= &\int_{t}^{t+\delta}
u^\epsilon_s(\varphi^\epsilon_s(x)) ds +
\int_{t}^{t+\delta} 
\sum_{k \in \mathbb{N}} \sigma_k(\varphi^\epsilon_s(x))\eta^{\epsilon,k}_s ds,
\end{align*}
therefore, since $\|u^\epsilon_s\|_{L^\infty(\mathbb{T}^2,\mathbb{R}^2)} \lesssim
\|\xi_0\|_{L^\infty(\mathbb{T}^2)}$, we have 
\begin{align*}
\sup_{\substack{t+\delta \leq T}}
\| \varphi^\epsilon_{t+\delta} - \varphi^\epsilon_t \|_{L^{\infty}(\mathbb{T}^2,\mathbb{T}^2)}
\lesssim\,
&\delta\left( 1+\sum_{k \in \mathbb{N}} 
\|\sigma_k\|_{L^\infty(\mathbb{T}^2,\mathbb{R}^2)}
\sup_{s\in[0,T]} |\eta^{\epsilon,k}_s|\right).
\end{align*}
The thesis follows by \autoref{lem:OU}.
\end{proof}

The previous lemma can be slightly improved by the following:
\begin{lem} \label{lem:phi_eps_incr_bis}
For every $T>0$, $p\geq 1$ and fixed $n=0,\dots,T/\delta_\epsilon -1$ we have
\begin{equation*}
\mathbb{E} \left[ 
\| \varphi^\epsilon_{(n+1)\delta_\epsilon} - \varphi^\epsilon_{n\delta_\epsilon} \|^p_{L^{\infty}(\mathbb{T}^2,\mathbb{T}^2)}\right] \lesssim
\frac{\delta_\epsilon^{2p}}{\epsilon^{2p}} + \delta_\epsilon^{p/2} + \epsilon^p.
\end{equation*}
\end{lem}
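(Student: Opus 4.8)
The plan is to split the increment coming from \eqref{eq:char_eps} on $[n\delta_\epsilon,(n+1)\delta_\epsilon]$ as
\begin{align*}
\varphi^\epsilon_{(n+1)\delta_\epsilon}(x) - \varphi^\epsilon_{n\delta_\epsilon}(x)
= &\int_{n\delta_\epsilon}^{(n+1)\delta_\epsilon} u^\epsilon_s(\varphi^\epsilon_s(x)) ds
+ \sum_{k \in \mathbb{N}} \sigma_k(\varphi^\epsilon_{n\delta_\epsilon}(x)) \int_{n\delta_\epsilon}^{(n+1)\delta_\epsilon} \eta^{\epsilon,k}_s ds \\
&+ \sum_{k \in \mathbb{N}} \int_{n\delta_\epsilon}^{(n+1)\delta_\epsilon} \left[ \sigma_k(\varphi^\epsilon_s(x)) - \sigma_k(\varphi^\epsilon_{n\delta_\epsilon}(x)) \right] \eta^{\epsilon,k}_s ds,
\end{align*}
that is, a drift term, a \emph{frozen-noise} term in which the coefficient is evaluated at the left endpoint, and a \emph{freezing error}. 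The gain over \autoref{lem:phi_eps_incr} comes entirely from freezing $\sigma_k$ at $\varphi^\epsilon_{n\delta_\epsilon}$: this uncouples the coefficient from the Ornstein--Uhlenbeck process and lets us exploit the near-cancellation in its time integral.

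For the drift term I would use $\|u^\epsilon_s\|_{L^\infty(\mathbb{T}^2,\mathbb{R}^2)} \lesssim \|\xi_0\|_{L^\infty(\mathbb{T}^2)}$, exactly as in \autoref{lem:phi_eps_incr}, to get a deterministic bound of order $\delta_\epsilon$, hence a contribution $\lesssim \delta_\epsilon^p \leq \delta_\epsilon^{p/2}$ to the $p$-th moment (for $\delta_\epsilon \leq 1$). For the frozen-noise term the crucial input is the computation recorded in the Remark after \eqref{eq:xi_eps}: integrating the explicit form of $\eta^{\epsilon,k}$ and interchanging the order of integration gives $\int_0^t \eta^{\epsilon,k}_s ds = \beta^k_t + O(\epsilon)$, the remainder collecting the terms $\epsilon^2(1-e^{-\epsilon^{-2}t})\eta^{\epsilon,k}_0$ and $\int_0^t e^{-\epsilon^{-2}(t-r)}d\beta^k_r$, each of size $O(\epsilon)$ in every $L^p(\Omega)$. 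Subtracting the values at the two endpoints, $\int_{n\delta_\epsilon}^{(n+1)\delta_\epsilon}\eta^{\epsilon,k}_s ds$ is a Brownian increment $\beta^k_{(n+1)\delta_\epsilon}-\beta^k_{n\delta_\epsilon}$ of $L^p(\Omega)$-size $\delta_\epsilon^{1/2}$ plus an $O(\epsilon)$ remainder. Bounding $|\sigma_k(\varphi^\epsilon_{n\delta_\epsilon}(x))| \leq \|\sigma_k\|_{L^\infty(\mathbb{T}^2,\mathbb{R}^2)}$ uniformly in $x$ and summing over $k$ by Minkowski's inequality in $L^p(\Omega)$ --- using $\sum_k \|\sigma_k\|_{L^\infty(\mathbb{T}^2,\mathbb{R}^2)} < \infty$, which follows from (A1) since each $\sigma_k$ is zero-mean on the torus --- produces the contribution $\lesssim \delta_\epsilon^{p/2} + \epsilon^p$.

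The freezing error is the term that generates $\delta_\epsilon^{2p}/\epsilon^{2p}$, and it is where I expect the main difficulty. Using the $C^1$-regularity of $\sigma_k$ I would bound its $L^\infty(\mathbb{T}^2,\mathbb{T}^2)$-norm in $x$ by $\delta_\epsilon \left( \sum_k \|\nabla\sigma_k\|_{L^\infty} \sup_{s \leq T}|\eta^{\epsilon,k}_s| \right) \sup_{s \leq (n+1)\delta_\epsilon}\|\varphi^\epsilon_s - \varphi^\epsilon_{n\delta_\epsilon}\|_{L^\infty(\mathbb{T}^2,\mathbb{T}^2)}$, after pulling the flow increment out of the integral and estimating $\int |\eta^{\epsilon,k}_s| ds \leq \delta_\epsilon \sup_{s\leq T}|\eta^{\epsilon,k}_s|$. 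The two random factors are correlated, so I would decouple them by the Cauchy--Schwarz inequality in $\Omega$ at exponent $2p$: \autoref{lem:phi_eps_incr} (with exponent $2p$) controls the flow increment by $\delta_\epsilon^p/\epsilon^p$, while \autoref{lem:OU} together with $\sum_k\|\nabla\sigma_k\|_{L^\infty}<\infty$ controls the other factor by $\epsilon^{-p}$, and the overall $\delta_\epsilon^p$ multiplies these to give $\delta_\epsilon^{2p}/\epsilon^{2p}$. Summing the three contributions yields the stated bound. The delicate point is precisely this bookkeeping: the crude $\epsilon^{-1}$ losses from \autoref{lem:phi_eps_incr} and from \autoref{lem:OU} must meet each other only once, which is exactly what the left-endpoint freezing guarantees.
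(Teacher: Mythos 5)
Your proposal is correct and follows essentially the same route as the paper: the same three-term decomposition (drift, coefficient frozen at the left endpoint, freezing error), the same identification of $\int_{n\delta_\epsilon}^{(n+1)\delta_\epsilon}\eta^{\epsilon,k}_s\,ds$ with a Brownian increment plus an $O(\epsilon)$ remainder, and the same decoupling of the flow increment from the Ornstein--Uhlenbeck supremum via H\"older (the paper uses a general exponent $q>1$ where you use Cauchy--Schwarz, which makes no difference). No gaps.
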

\begin{proof}
The increment $\varphi^\epsilon_{(n+1)\delta_\epsilon}(x) - \varphi^\epsilon_{n\delta_\epsilon}(x)$ can be written as
\begin{align*}
\varphi^\epsilon_{(n+1)\delta_\epsilon}(x) - \varphi^\epsilon_{n\delta_\epsilon}(x)
= &\int_{n\delta_\epsilon}^{(n+1)\delta_\epsilon}
u^\epsilon_s(\varphi^\epsilon_s(x)) ds \\
&+
\int_{n\delta_\epsilon}^{(n+1)\delta_\epsilon}
\sum_{k \in \mathbb{N}} \left( \sigma_k(\varphi^\epsilon_s(x))-\sigma_k(\varphi^\epsilon_{n\delta_\epsilon}(x)) \right)\eta^{\epsilon,k}_s ds \\
&+
\int_{n\delta_\epsilon}^{(n+1)\delta_\epsilon}
\sum_{k \in \mathbb{N}} 
\sigma_k(\varphi^\epsilon_{n\delta_\epsilon}(x)) \eta^{\epsilon,k}_s ds.
\end{align*}
The first term is easy, and can be controlled as in \autoref{lem:phi_eps_incr}. The second one is bounded in $L^\infty(\mathbb{T}^2,\mathbb{T}^2)$ uniformly in $n$ by
\begin{align*}
\int_{0}^{\delta_\epsilon}
\sum_{k \in \mathbb{N}} 
\|\nabla \sigma_k \|_{L^\infty(\mathbb{T}^2,\mathbb{R}^4)}
\sup_{\substack{t+s \leq T}}
\| \varphi^\epsilon_{t+s} - \varphi^\epsilon_t \|_{L^{\infty}(\mathbb{T}^2,\mathbb{T}^2)}
\sup_{s\in[0,T]} |\eta^{\epsilon,k}_s| ds,
\end{align*}
and by H\"older inequality with exponent $q>1$
\begin{align*}
\mathbb{E}& \left[ \left( \int_{0}^{\delta_\epsilon}
\sum_{k \in \mathbb{N}} 
\|\nabla \sigma_k \|_{L^\infty(\mathbb{T}^2,\mathbb{R}^4)}
\sup_{\substack{t+s \leq T}}
\| \varphi^\epsilon_{t+s} - \varphi^\epsilon_t \|_{L^{\infty}(\mathbb{T}^2,\mathbb{T}^2)}
\sup_{s\in[0,T]} |\eta^{\epsilon,k}_s| ds
\right)^p \right] \\
&\leq
\delta_\epsilon^{p-1}
\left( \sum_{k \in \mathbb{N}} 
\|\nabla \sigma_k \|_{L^\infty(\mathbb{T}^2,\mathbb{R}^4)} \right)^{p-1} 
\int_{0}^{\delta_\epsilon}
\sum_{k \in \mathbb{N}} 
\|\nabla \sigma_k \|_{L^\infty(\mathbb{T}^2,\mathbb{R}^4)}\\
&\quad \times
\mathbb{E} \left[ \sup_{\substack{t+s \leq T}}
\| \varphi^\epsilon_{t+s} - \varphi^\epsilon_t \|^{pq}_{L^{\infty}(\mathbb{T}^2,\mathbb{T}^2)} \right]^{1/q} 
\mathbb{E} \left[ \sup_{s\in[0,T]} |\eta^{\epsilon,k}_s|^{pq'}  \right]^{1/q'} ds \\
&\lesssim
\delta_\epsilon^{p-1}
\int_{0}^{\delta_\epsilon}
\frac{s^p}{\epsilon^{2p}} ds
\lesssim
\frac{\delta_\epsilon^{2p}}{\epsilon^{2p}}.
\end{align*}
The third term is bounded in $L^\infty(\mathbb{T}^2,\mathbb{R}^2)$ by
\begin{align*}
\sum_{k \in \mathbb{N}}
\| \sigma_k \|_{L^\infty(\mathbb{T}^2,\mathbb{R}^2)}
\left| 
\int_{n\delta_\epsilon}^{(n+1)\delta_\epsilon}
\eta^{\epsilon,k}_s ds \right|
=
\sum_{k \in \mathbb{N}}
\| \sigma_k \|_{L^\infty(\mathbb{T}^2,\mathbb{R}^2)}
\left| 
\beta^{\epsilon,k}_{(n+1)\delta_\epsilon}-\beta^{\epsilon,k}_{n\delta_\epsilon} \right|,
\end{align*}
where $\beta^{\epsilon,k}_t$ stands for the integrated Ornstein-Uhlenbeck process:
\begin{align*}
\beta^{\epsilon,k}_t = \int_0^t \eta^{\epsilon,k}_s ds, 
\quad
t \in [0,T], k \in \mathbb{N}.
\end{align*}
Using 
\begin{align*}
\beta^{\epsilon,k}_{(n+1)\delta_\epsilon}-\beta^{\epsilon,k}_{n\delta_\epsilon} 
=
\beta^{k}_{(n+1)\delta_\epsilon}-\beta^{k}_{n\delta_\epsilon} 
- \epsilon^2
\left( 
\eta^{\epsilon,k}_{(n+1)\delta_\epsilon}-\eta^{\epsilon,k}_{n\delta_\epsilon} \right)
\end{align*}
and \autoref{lem:OU} we get
\begin{align*}
\mathbb{E}& \left[ \left( \sum_{k \in \mathbb{N}}
\| \sigma_k \|_{L^\infty(\mathbb{T}^2,\mathbb{R}^2)}
\left| 
\beta^{\epsilon,k}_{(n+1)\delta_\epsilon}-\beta^{\epsilon,k}_{n\delta_\epsilon} \right|
\right)^p \right] 
\lesssim
\delta_\epsilon^{p/2} + \epsilon^p.
\end{align*}
\end{proof}
Next, we move to the analogous estimate for the limiting characteristics $\varphi$. 
Denote by $c:\mathbb{T}^2 \to \mathbb{R}^2$ the following Stratonovich corrector:
\begin{align*}
c(x)=\frac12 \sum_{k \in \mathbb{N}} \nabla\sigma_k(x) \cdot \sigma_k(x),
\quad
x \in \mathbb{T}^2,
\end{align*}
which allows to rewrite \eqref{eq:char} in the following It\=o form:
\begin{align*}
	\begin{cases}
d\varphi_{t}(x) =
u_t\left(  \varphi_{t}(x)\right)  dt +
c\left(  \varphi_{t}(x)\right)  dt +
\sum_{k}\sigma_{k}\left(  \varphi_{t}(x)\right)
 d\beta_{t}^{k},\\ 
\varphi_{0}\left(  x\right) = x.
	\end{cases}
\end{align*}

\begin{lem} \label{lem:phi_incr}
Let $T>0$, $p\geq 1$, $\Delta>0$. Then for every fixed $n=0,\dots,T/\delta_\epsilon -1$:
\begin{equation*}
\mathbb{E} \left[ 
\sup_{\substack{\delta \leq \Delta}}
\| \varphi_{n\delta_\epsilon+\delta} - \varphi_{n\delta_\epsilon} \|^p_{L^{\infty}(\mathbb{T}^2,\mathbb{T}^2)}\right] \lesssim 
\Delta^{p/2}.
\end{equation*}
\end{lem}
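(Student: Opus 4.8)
The plan is to write the increment of the limiting flow over the window $[n\delta_\epsilon,n\delta_\epsilon+\delta]$, using the It\^o form of \eqref{eq:char} recalled above, as the sum of three contributions:
\[
\varphi_{n\delta_\epsilon+\delta}(x)-\varphi_{n\delta_\epsilon}(x)
= \int_{n\delta_\epsilon}^{n\delta_\epsilon+\delta} u_s(\varphi_s(x))\,ds
+ \int_{n\delta_\epsilon}^{n\delta_\epsilon+\delta} c(\varphi_s(x))\,ds
+ \sum_{k}\int_{n\delta_\epsilon}^{n\delta_\epsilon+\delta}\sigma_k(\varphi_s(x))\,d\beta_s^k.
\]
First I would dispose of the two drift terms. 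Since $\varphi_s$ is measure-preserving, $\|\xi_s\|_{L^\infty(\mathbb{T}^2)}=\|\xi_0\|_{L^\infty(\mathbb{T}^2)}$, and the Biot--Savart estimate gives $\|u_s\|_{L^\infty(\mathbb{T}^2,\mathbb{R}^2)}\lesssim\|\xi_0\|_{L^\infty(\mathbb{T}^2)}$ uniformly in $s$ and $\omega$; similarly $\|c\|_{L^\infty}\le\frac12\sum_k\|\nabla\sigma_k\|_{L^\infty}\|\sigma_k\|_{L^\infty}<\infty$ by (A1). Hence both drift integrals are bounded, uniformly in $x$ and $\omega$, by $\lesssim\delta\le\Delta$, contributing $\Delta^p$ after raising to the power $p$. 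Since increments on the torus are anyway bounded by its diameter, we may assume $\Delta\le 1$, so that $\Delta^p\le\Delta^{p/2}$ and this term is harmless.

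The heart of the matter is the martingale term $M_\delta(x):=\sum_k\int_{n\delta_\epsilon}^{n\delta_\epsilon+\delta}\sigma_k(\varphi_s(x))\,d\beta_s^k$. For a \emph{fixed} $x$, the Burkholder--Davis--Gundy inequality controls its running supremum by the quadratic variation, whose bracket over a window of length at most $\Delta$ is $\sum_k\int|\sigma_k(\varphi_s(x))|^2\,ds\le\Delta\sum_k\|\sigma_k\|_{L^\infty}^2$, a bound \emph{uniform in $x$} and finite by (A1). This yields $\E[\sup_{\delta\le\Delta}|M_\delta(x)|^p]\lesssim\Delta^{p/2}$ for each $x$, hence the pointwise-in-$x$ version of the claimed estimate.

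The main obstacle is to upgrade this pointwise-in-$x$ bound to the essential supremum over $x$ required by the $L^\infty(\mathbb{T}^2,\mathbb{T}^2)$ norm, since the supremum over $x$ of a family of martingales is not controlled by BDG alone. I would handle this by a Kolmogorov continuity (equivalently Garsia--Rodemich--Rumsey) argument applied to the increment field $\Psi_\delta(x):=\varphi_{n\delta_\epsilon+\delta}(x)-\varphi_{n\delta_\epsilon}(x)$. This requires a spatial increment estimate: writing the SDE for the difference $D_t:=\varphi_t(x)-\varphi_t(x')$, the contributions of $c$ and of the $\sigma_k$ are Lipschitz in their argument, while the velocity difference $u_s(\varphi_s(x))-u_s(\varphi_s(x'))$ is only log-Lipschitz, controlled through \autoref{lem:log_lip} by $\gamma(|D_s|)$. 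An It\^o estimate on the moments of $D_t$ combined with \autoref{lem:comp} produces a spatial modulus $\E[\sup_s|D_s|^q]\lesssim|x-x'|^{q\alpha}$ with a positive Hölder exponent $\alpha=\exp(-CT)$ (possibly small for large $T$); feeding this into the bracket of the increment of $M_\delta(x)-M_\delta(x')$, and using $\gamma(r)\lesssim r^\beta$ for the drift part with any $\beta<1$, gives $\E[\sup_{\delta\le\Delta}|\Psi_\delta(x)-\Psi_\delta(x')|^q]\lesssim\Delta^{q/2}|x-x'|^{\theta q}$ for some $\theta>0$. Choosing $q$ large enough that $\theta q>2$, the Kolmogorov estimate converts the diagonal bound into $\E[\sup_x\sup_{\delta\le\Delta}|\Psi_\delta(x)|^q]\lesssim\Delta^{q/2}$ without spoiling the $\Delta$-scaling; since the statement is claimed for every $p\ge1$, Jensen's inequality then transfers the bound from this large $q$ to the desired $p$. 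The only genuinely delicate input is precisely that the log-Lipschitz regularity of the Biot--Savart velocity forces a time-dependent, possibly small, spatial Hölder exponent, which is why one must take the moment order large before invoking Kolmogorov.
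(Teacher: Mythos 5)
Your decomposition and the treatment of the two drift terms coincide with the paper's: the increment is split into the $u$-drift, the Stratonovich corrector $c$, and the It\^o martingale part, the first two being bounded pathwise by $C\delta\le C\Delta$ and hence harmless. The core estimate is also the same Burkholder--Davis--Gundy bound with bracket $\lesssim\Delta$. Where you genuinely diverge is in how the supremum over $x\in\mathbb{T}^2$ is handled. The paper simply applies BDG to $\sup_{\delta\le\Delta}\|\int_{n\delta_\epsilon}^{n\delta_\epsilon+\delta}\sigma_k(\varphi_s(\cdot))\,d\beta^k_s\|_{L^\infty(\mathbb{T}^2,\mathbb{R}^2)}$ as if the stochastic integral were an $L^\infty(\mathbb{T}^2)$-valued martingale, obtaining $\Delta^{p/2}\|\sigma_k\|^p_{L^\infty}$ for each fixed $k$ in one line, and then sums over $k$ by a weighted H\"older inequality with weights $\|\sigma_k\|^{1-1/p}_{L^\infty}$. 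You instead prove the pointwise-in-$x$ BDG bound (with the whole sum over $k$ inside the bracket, which only needs $\sum_k\|\sigma_k\|^2_{L^\infty}<\infty$) and then upgrade it to the essential supremum via a Kolmogorov/Garsia--Rodemich--Rumsey argument, using the spatial H\"older modulus of the flow that comes from \autoref{lem:log_lip} and \autoref{lem:comp}. This is more work, but it addresses the one point the paper glosses over: scalar BDG does not by itself control a supremum over an uncountable family of martingales, and $L^\infty(\mathbb{T}^2)$ is not a space in which vector-valued BDG is available off the shelf, so some continuity-in-$x$ input (of the kind you supply, and which is essentially available in \cite{BrFlMa16}) is indeed what makes the $L^\infty_x$ statement rigorous. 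The price you pay is the time-dependent, possibly very small, spatial H\"older exponent forced by the log-Lipschitz Biot--Savart velocity, which you correctly absorb by taking the moment order $q$ large before invoking Kolmogorov and then descending to general $p$ by Jensen; none of this spoils the $\Delta^{p/2}$ rate. In short: same skeleton and same key inequality, but your proposal fills in a genuine technical step that the paper's proof leaves implicit.
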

\begin{proof}
The increment $\varphi_{n\delta_\epsilon+\delta}(x) - \varphi_{n\delta_\epsilon}(x)$ can be written as
\begin{align*}
\varphi_{n\delta_\epsilon+\delta}(x) - \varphi_{n\delta_\epsilon}(x) 
= &\int_{n\delta_\epsilon}^{n\delta_\epsilon+\delta}
u_s(\varphi_s(x)) ds +
\int_{n\delta_\epsilon}^{n\delta_\epsilon+\delta}
c(\varphi_s(x))ds \\
&+
\int_{n\delta_\epsilon}^{n\delta_\epsilon+\delta} 
\sum_{k \in \mathbb{N}} \sigma_k(\varphi_s(x))d\beta^{k}_s.
\end{align*}
The first two terms are easy and can be handled as usual. On the other hand, using Burkholder-Davis-Gundy inequality, for fixed $n$ and $k$ the last term is controlled by
\begin{align*}
\mathbb{E} \left[ 
\sup_{\delta \leq \Delta}
\left\| \int_{n\delta_\epsilon}^{n\delta_\epsilon+\delta} 
\sigma_k(\varphi_s(x))d\beta^{k}_s \right\|^p_{L^{\infty}(\mathbb{T}^2,\mathbb{R}^2)}\right]
\lesssim
\Delta^{p/2} \|\sigma_k\|^p_{L^{\infty}(\mathbb{T}^2,\mathbb{R}^2)},
\end{align*}
hence, for fixed $n$ and for $\alpha=1-1/p$, H\"older inequality with exponent $p$ gives
\begin{align*}
\mathbb{E}& \left[ 
\sup_{\delta \leq \Delta}
\left\| 
\sum_{k \in \mathbb{N}}
\int_{n\delta_\epsilon}^{n\delta_\epsilon+\delta}
\sigma_k(\varphi_s(x))d\beta^{k}_s \right\|^p_{L^{\infty}(\mathbb{T}^2,\mathbb{R}^2)}\right] \\
&=
\mathbb{E} \left[ 
\sup_{\delta \leq \Delta}
\left\| 
\sum_{k \in \mathbb{N}}
\|\sigma_k\|^{\alpha}_{L^{\infty}(\mathbb{T}^2,\mathbb{R}^2)}
\|\sigma_k\|^{-\alpha}_{L^{\infty}(\mathbb{T}^2,\mathbb{R}^2)}
\int_{n\delta_\epsilon}^{n\delta_\epsilon+\delta} 
\sigma_k(\varphi_s(x))d\beta^{k}_s \right\|^p_{L^{\infty}(\mathbb{T}^2,\mathbb{R}^2)}\right] \\
&\leq
\left( \sum_{k \in \mathbb{N}}
\|\sigma_k\|_{L^{\infty}(\mathbb{T}^2,\mathbb{R}^2)}\right)^{p}\Delta^{p/2} .
\end{align*}
\end{proof}

\subsection{The Nakao method} \label{ssec:nakao}
The argument presented in this paragraph is due to Nakao and can be found, for instance, in \cite{IkWa14}. Roughly speaking, it allows to exploit the discretization of the equation to show the closeness, in a certain sense to be specified, between the Stratonovich corrector and the iterated integral of the Ornstein-Uhlenbeck process.

First we need some preparation.
For any $n=0,\dots,T/\delta_\epsilon-1$, consider the following decomposition:
\begin{align*}
\int_{n\delta_\epsilon}^{(n+1)\delta_\epsilon} &\sum_{k \in \mathbb{N}} \sigma_k(\varphi^\epsilon_s(x)) \eta^{\epsilon,k}_s ds \\
=
&\int_{n\delta_\epsilon}^{(n+1)\delta_\epsilon} \sum_{k \in \mathbb{N}} \left(\sigma_k(\varphi^\epsilon_s(x))-\sigma_k(\varphi^\epsilon_{n\delta_\epsilon}(x))\right)
\eta^{\epsilon,k}_s ds \\
&+
\int_{n\delta_\epsilon}^{(n+1)\delta_\epsilon} \sum_{k \in \mathbb{N}} \sigma_k(\varphi^\epsilon_{n\delta_\epsilon}(x))
\eta^{\epsilon,k}_s ds \\
=
&
\int_{n\delta_\epsilon}^{(n+1)\delta_\epsilon}
\sum_{k \in \mathbb{N}}\left(
\int_{n\delta_\epsilon}^{s} 
\nabla\sigma_k(\varphi^\epsilon_r(x)) \cdot
u^\epsilon_r(\varphi^\epsilon_r(x)) dr
\right)\eta^{\epsilon,k}_s ds \\
&+
\int_{n\delta_\epsilon}^{(n+1)\delta_\epsilon}
\sum_{k,h \in \mathbb{N}}\left(
\int_{n\delta_\epsilon}^{s} 
\nabla\sigma_k(\varphi^\epsilon_r(x)) \cdot
\sigma_h(\varphi^\epsilon_r(x)) \eta^{\epsilon,h}_r dr
\right)\eta^{\epsilon,k}_s ds \\
&+
\int_{n\delta_\epsilon}^{(n+1)\delta_\epsilon} \sum_{k \in \mathbb{N}} \sigma_k(\varphi^\epsilon_{n\delta_\epsilon}(x))
d\beta^k_s \\
&-
\int_{n\delta_\epsilon}^{(n+1)\delta_\epsilon} \sum_{k \in \mathbb{N}} \sigma_k(\varphi^\epsilon_{n\delta_\epsilon}(x))
\epsilon^2 d\eta^{\epsilon,k}_s  \\
=&\, I^\epsilon_1(n) + I^\epsilon_2(n) + I^\epsilon_3(n) + I^\epsilon_4(n).
\end{align*}

We further decompose
\begin{align*}
I^\epsilon_2(n) =
&\int_{n\delta_\epsilon}^{(n+1)\delta_\epsilon}
\sum_{k,h \in \mathbb{N}} \bigg(
\int_{n\delta_\epsilon}^{s} 
\Big( \nabla\sigma_k(\varphi^\epsilon_r(x)) \cdot
\sigma_h(\varphi^\epsilon_r(x)) \\
& \quad
-\nabla\sigma_k(\varphi^\epsilon_{n\delta_\epsilon}(x)) \cdot
\sigma_h(\varphi^\epsilon_{n\delta_\epsilon}(x)) \Big)\eta^{\epsilon,h}_r dr
\bigg)\eta^{\epsilon,k}_s ds \\
&+
\int_{n\delta_\epsilon}^{(n+1)\delta_\epsilon}
\sum_{k,h \in \mathbb{N}} \bigg(
\int_{n\delta_\epsilon}^{s} 
\Big( \nabla\sigma_k(\varphi^\epsilon_{n\delta_\epsilon}(x)) \cdot
\sigma_h(\varphi^\epsilon_{n\delta_\epsilon}(x)) \\ 
& \quad
-\nabla\sigma_k(\varphi_{n\delta_\epsilon}(x)) \cdot
\sigma_h(\varphi_{n\delta_\epsilon}(x))\Big)\eta^{\epsilon,h}_r dr
\bigg)\eta^{\epsilon,k}_s ds \\
&+
\int_{n\delta_\epsilon}^{(n+1)\delta_\epsilon}
\sum_{k,h \in \mathbb{N}} \left(
\int_{n\delta_\epsilon}^{s} 
\nabla\sigma_k(\varphi_{n\delta_\epsilon}(x)) \cdot
\sigma_h(\varphi_{n\delta_\epsilon}(x))\eta^{\epsilon,h}_r dr
\right)\eta^{\epsilon,k}_s ds \\
=&\,
I^\epsilon_{2a}(n)+I^\epsilon_{2b}(n)+I^\epsilon_{2c}(n).
\end{align*}

Regarding the limiting Stratonovich integral, we can rewrite:
\begin{align*}
\int_{n\delta_\epsilon}^{(n+1)\delta_\epsilon}\sum_{k\in \mathbb{N}} \sigma_k(\varphi_s(x)) \circ d\beta^k_s 
=
&\int_{n\delta_\epsilon}^{(n+1)\delta_\epsilon}
\sum_{k\in \mathbb{N}} \left(\sigma_k(\varphi_s(x))-\sigma_k(\varphi_{n\delta_\epsilon}(x))\right) d\beta^k_s \\
&+
\int_{n\delta_\epsilon}^{(n+1)\delta_\epsilon}
\sum_{k\in \mathbb{N}} \sigma_k(\varphi_{n\delta_\epsilon}(x)) 
d\beta^k_s \\
&+
\int_{n\delta_\epsilon}^{(n+1)\delta_\epsilon}
\left( c(\varphi_s(x))- c(\varphi_{n\delta_\epsilon}(x))  \right) ds \\
&+
\int_{n\delta_\epsilon}^{(n+1)\delta_\epsilon}
c(\varphi_{n\delta_\epsilon}(x)) ds \\
=&\, J^\epsilon_1(n) + J^\epsilon_2(n) + J^\epsilon_3(n) + J^\epsilon_4(n).
\end{align*}

\begin{lem} \label{lem:est_aux}
The following inequalities hold:
\begin{align*}
\mathbb{E} \left[ \sup_{m=1,\dots,T/\delta_\epsilon} \left\| 
\sum_{n=0}^{m-1} I^\epsilon_1(n) \right\|_{L^1(\mathbb{T}^2,\mathbb{R}^2)}\right]
&\lesssim \frac{\delta_\epsilon}{\epsilon};\\
\mathbb{E} \left[ \sup_{m=1,\dots,T/\delta_\epsilon} \left\| 
\sum_{n=0}^{m-1} I^\epsilon_{2a}(n) \right\|_{L^1(\mathbb{T}^2,\mathbb{R}^2)}\right]
&\lesssim \frac{\delta_\epsilon^2}{\epsilon^3} ;\\
\mathbb{E} \left[ \sup_{m=1,\dots,T/\delta_\epsilon} \left\| 
\sum_{n=0}^{m-1} I^\epsilon_4(n) \right\|_{L^1(\mathbb{T}^2,\mathbb{R}^2)}\right]
&\lesssim \frac{\delta_\epsilon}{\epsilon} +
\frac{\epsilon}{\delta_\epsilon^{1/2}} +
\frac{\epsilon^2}{\delta_\epsilon} +
\epsilon;\\
\mathbb{E} \left[ \sup_{m=1,\dots,T/\delta_\epsilon} \left\| 
\sum_{n=0}^{m-1} J^\epsilon_1(n) \right\|_{L^1(\mathbb{T}^2,\mathbb{R}^2)}\right]
&\lesssim \delta_\epsilon^{1/2} ;\\
\mathbb{E} \left[ \sup_{m=1,\dots,T/\delta_\epsilon} \left\| 
\sum_{n=0}^{m-1} J^\epsilon_3(n) \right\|_{L^1(\mathbb{T}^2,\mathbb{R}^2)}\right]
&\lesssim \delta_\epsilon^{1/2}.\\
\end{align*}
In particular, all the quantities above go to zero as $\epsilon \to 0$, under the condition $\delta_\epsilon^2/\epsilon^3 \to 0$, $\delta_\epsilon/\epsilon^2 \to \infty$.
\end{lem}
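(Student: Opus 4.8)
\section*{Proof proposal}

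The plan is to bound the five sums one at a time, reducing each estimate to a pointwise-in-$x$ bound on $\mathbb{T}^2$: since the torus has unit measure, the $L^1(\mathbb{T}^2,\R^2)$ norm is dominated by the corresponding $L^\infty$ quantity, the only exception being the martingale term $J^\epsilon_1$, where I will exchange the spatial integral and the expectation by Fubini and argue pointwise in $x$. The recurring ingredients are: the moment bound $\E[\sup_{s\le T}|\eta^{\epsilon,k}_s|^p]\lesssim\epsilon^{-p}$ of \autoref{lem:OU}; the uniform velocity estimate $\|u^\epsilon_s\|_{L^\infty}\lesssim\|\xi_0\|_{L^\infty}$ coming from the Biot--Savart bound and the measure-preserving property; the increment estimates of \autoref{lem:phi_eps_incr}, \autoref{lem:phi_eps_incr_bis} and \autoref{lem:phi_incr}; and assumption (A1), which guarantees summability of the series in $k$ (and in $(k,h)$) through the norms $\|\sigma_k\|_{L^\infty}$, $\|\nabla\sigma_k\|_{L^\infty}$, $\|\nabla^2\sigma_k\|_{L^\infty}$.

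For $I^\epsilon_1(n)$ I would bound the inner drift integral by $\delta_\epsilon\|u^\epsilon\|_{L^\infty}\|\nabla\sigma_k\|_{L^\infty}\lesssim\delta_\epsilon\|\nabla\sigma_k\|_{L^\infty}$, so that $|I^\epsilon_1(n)|\lesssim\delta_\epsilon\int_{n\delta_\epsilon}^{(n+1)\delta_\epsilon}\sum_k\|\nabla\sigma_k\|_{L^\infty}|\eta^{\epsilon,k}_s|\,ds$; summing over $n$ telescopes the time integrals into a single integral over $[0,m\delta_\epsilon]\subseteq[0,T]$, the $\sup_m$ is attained at $m=T/\delta_\epsilon$, and \autoref{lem:OU} supplies the factor $\epsilon^{-1}$, giving $\delta_\epsilon/\epsilon$. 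For $I^\epsilon_{2a}(n)$ I would estimate the difference of products $\nabla\sigma_k(\varphi^\epsilon_r)\cdot\sigma_h(\varphi^\epsilon_r)-\nabla\sigma_k(\varphi^\epsilon_{n\delta_\epsilon})\cdot\sigma_h(\varphi^\epsilon_{n\delta_\epsilon})$ by the product rule and (A1) by $(\|\nabla^2\sigma_k\|_{L^\infty}\|\sigma_h\|_{L^\infty}+\|\nabla\sigma_k\|_{L^\infty}\|\nabla\sigma_h\|_{L^\infty})\,|\varphi^\epsilon_r-\varphi^\epsilon_{n\delta_\epsilon}|$; the two nested time integrals contribute $\delta_\epsilon^2$, the two Ornstein--Uhlenbeck factors contribute $\epsilon^{-1}$ each, the flow increment contributes $\delta_\epsilon/\epsilon$ via \autoref{lem:phi_eps_incr}, and there are $T/\delta_\epsilon$ subintervals; distributing the expectation over this triple product of suprema by H\"older yields $\delta_\epsilon^2\cdot(1/\delta_\epsilon)\cdot\epsilon^{-2}\cdot(\delta_\epsilon/\epsilon)=\delta_\epsilon^2/\epsilon^3$.

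The term $I^\epsilon_4$ is the crux, and I expect it to be the main obstacle. Writing $I^\epsilon_4(n)=-\epsilon^2\sum_k\sigma_k(\varphi^\epsilon_{n\delta_\epsilon}(x))(\eta^{\epsilon,k}_{(n+1)\delta_\epsilon}-\eta^{\epsilon,k}_{n\delta_\epsilon})$, a direct bound would cost one factor $\sup|\eta^{\epsilon,k}|\lesssim\epsilon^{-1}$ per subinterval, i.e. $\epsilon^2\cdot(T/\delta_\epsilon)\cdot\epsilon^{-1}=T\epsilon/\delta_\epsilon$, which does \emph{not} vanish in the regime $\delta_\epsilon/\epsilon^2\to\infty$. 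The remedy, following Nakao, is summation by parts: $\sum_{n=0}^{m-1}\sigma_k(\varphi^\epsilon_{n\delta_\epsilon})\,(\eta^{\epsilon,k}_{(n+1)\delta_\epsilon}-\eta^{\epsilon,k}_{n\delta_\epsilon})$ equals the boundary contributions $\sigma_k(\varphi^\epsilon_{(m-1)\delta_\epsilon})\eta^{\epsilon,k}_{m\delta_\epsilon}-\sigma_k(\varphi^\epsilon_0)\eta^{\epsilon,k}_0$ minus the variation sum $\sum_{n=1}^{m-1}(\sigma_k(\varphi^\epsilon_{n\delta_\epsilon})-\sigma_k(\varphi^\epsilon_{(n-1)\delta_\epsilon}))\eta^{\epsilon,k}_{n\delta_\epsilon}$. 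The boundary terms are $O(\epsilon^2\cdot\epsilon^{-1})=O(\epsilon)$, uniformly in $m$; the variation sum is controlled using $\|\nabla\sigma_k\|_{L^\infty}$ together with \autoref{lem:phi_eps_incr_bis}, applying Cauchy--Schwarz over the $T/\delta_\epsilon$ increments and H\"older against $\sup|\eta^{\epsilon,k}|$, which reproduces exactly the four terms $\delta_\epsilon/\epsilon+\epsilon/\delta_\epsilon^{1/2}+\epsilon^2/\delta_\epsilon+\epsilon$. The delicate points are the summation by parts (without which the estimate diverges) and the bookkeeping of which moment of \autoref{lem:phi_eps_incr_bis} to feed into Cauchy--Schwarz.

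For the two limiting terms I would argue as follows. The sum $\sum_{n=0}^{m-1}J^\epsilon_1(n)$ is, for each fixed $x$, the value at time $m\delta_\epsilon$ of the continuous martingale $N_t(x)=\int_0^t\sum_k(\sigma_k(\varphi_s(x))-\sigma_k(\varphi_{\lfloor s/\delta_\epsilon\rfloor\delta_\epsilon}(x)))\,d\beta^k_s$; bounding $\sup_m|\sum_n J^\epsilon_1(n)|\le\sup_{t\le T}|N_t(x)|$, passing the spatial integral inside by Fubini, and applying Burkholder--Davis--Gundy pointwise, I would estimate the quadratic variation through $\|\nabla\sigma_k\|_{L^\infty}$ and \autoref{lem:phi_incr} with $p=2$ (which gives $\delta_\epsilon$ per subinterval), so that Jensen yields $\delta_\epsilon^{1/2}$. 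For $J^\epsilon_3$ I would use that under (A1) the corrector $c=\tfrac12\sum_k\nabla\sigma_k\cdot\sigma_k$ is Lipschitz, bound $|J^\epsilon_3(n)|\le\mathrm{Lip}(c)\,\delta_\epsilon\sup_{\delta\le\delta_\epsilon}|\varphi_{n\delta_\epsilon+\delta}-\varphi_{n\delta_\epsilon}|$, sum the $T/\delta_\epsilon$ terms and invoke \autoref{lem:phi_incr} with $p=1$, for a total $\delta_\epsilon\cdot(T/\delta_\epsilon)\cdot\delta_\epsilon^{1/2}=T\delta_\epsilon^{1/2}$. Finally, the concluding assertion follows directly from the scalings: $\delta_\epsilon^2/\epsilon^3\to0$ forces $\delta_\epsilon=o(\epsilon^{3/2})$, hence $\delta_\epsilon/\epsilon\to0$ and $\delta_\epsilon^{1/2}\to0$, while $\delta_\epsilon/\epsilon^2\to\infty$ gives $\epsilon^2/\delta_\epsilon\to0$ and therefore also $\epsilon/\delta_\epsilon^{1/2}\to0$; together with $\epsilon\to0$ this sends all five right-hand sides to zero.
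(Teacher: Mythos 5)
Your proposal is correct and follows essentially the same route as the paper's proof: the same term-by-term reduction to $L^\infty$ bounds via \autoref{lem:OU} and the increment lemmas, the same summation by parts for $I^\epsilon_4$ (which is indeed the step without which the estimate would only give $\epsilon/\delta_\epsilon$), the Burkholder--Davis--Gundy/Jensen argument for $J^\epsilon_1$, and the Lipschitz bound on the corrector for $J^\epsilon_3$. The final arithmetic and the deduction of the concluding assertion from the two scaling conditions also match the paper.
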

\begin{proof}
Consider first $I^\epsilon_1(n)$.
Using $\| u^\epsilon _r\|_{L^\infty(\mathbb{T}^2,\mathbb{R}^2)} \lesssim \| \xi_0 \|_{L^\infty(\mathbb{T}^2)}$ for every $r \in [0,T]$, we get 
\begin{align*} %%% I_1 %%%
\mathbb{E}& \left[ \sup_{m=1,\dots,T/\delta_\epsilon} \left\| 
\sum_{n=0}^{m-1} I^\epsilon_1(n) \right\|_{L^1(\mathbb{T}^2,\mathbb{R}^2)}\right] \\
&\lesssim
\mathbb{E} \left[
\sum_{n=0}^{T/\delta_\epsilon-1}
\int_{n\delta_\epsilon}^{(n+1)\delta_\epsilon}
\sum_{k \in \mathbb{N}}
\| \nabla \sigma_k \|_{L^\infty(\mathbb{T}^2,\mathbb{R}^4)}
(s-n\delta_\epsilon) |\eta^{\epsilon,k}_s| ds
\right] \\
&\lesssim
\sum_{k \in \mathbb{N}}
\| \nabla \sigma_k \|_{L^\infty(\mathbb{T}^2,\mathbb{R}^4)}
\delta_\epsilon
\mathbb{E} \left[ \sup_{s \in [0,T]}|\eta^{\epsilon,k}_s| \right] 
\lesssim \frac{\delta_\epsilon}{\epsilon}.
\end{align*}
For the term $I^\epsilon_{2a}(n)$, \autoref{lem:phi_eps_incr} gives:
\begin{align*} %%% I_2a %%%
\mathbb{E}& \left[ \sup_{m=1,\dots,T/\delta_\epsilon} \left\| 
\sum_{n=0}^{m-1} I^\epsilon_{2a}(n) \right\|_{L^1(\mathbb{T}^2,\mathbb{R}^2)}\right] \\
&\leq
\sum_{k,h \in \mathbb{N}}
\left( 
\|\nabla^2 \sigma_k \|_{L^\infty(\mathbb{T}^2,\mathbb{R}^8)}
\|\sigma_h \|_{L^\infty(\mathbb{T}^2,\mathbb{R}^2)} +
\|\nabla \sigma_k \|_{L^\infty(\mathbb{T}^2,\mathbb{R}^4)}
\|\nabla \sigma_h \|_{L^\infty(\mathbb{T}^2,\mathbb{R}^4)} 
\right) \\
&\quad \times
\mathbb{E} \left[
\sum_{n=0}^{T/\delta_\epsilon-1}
\int_{n\delta_\epsilon}^{(n+1)\delta_\epsilon}
\int_{n\delta_\epsilon}^{s}
\| \varphi^\epsilon_r - \varphi^\epsilon_{n\delta_\epsilon}\|_{L^\infty(\mathbb{T}^2,\mathbb{T}^2)}
|\eta^{\epsilon,h}_r||\eta^{\epsilon,k}_s| dr ds
\right] \\
&\lesssim
\sum_{k,h \in \mathbb{N}}
\left( 
\|\nabla^2 \sigma_k \|_{L^\infty(\mathbb{T}^2,\mathbb{R}^8)}
\|\sigma_h \|_{L^\infty(\mathbb{T}^2,\mathbb{R}^2)} +
\|\nabla \sigma_k \|_{L^\infty(\mathbb{T}^2,\mathbb{R}^4)}
\|\nabla \sigma_h \|_{L^\infty(\mathbb{T}^2,\mathbb{R}^4)} 
\right) \\
&\quad \times
\sum_{n=0}^{T/\delta_\epsilon-1}
\int_{n\delta_\epsilon}^{(n+1)\delta_\epsilon}
\int_{n\delta_\epsilon}^{s}
\frac{r-n\delta_\epsilon}{\epsilon^3} dr ds
\lesssim 
\frac{\delta_\epsilon^2}{\epsilon^3}.
\end{align*}
The term $I^\epsilon_{4}(n)$ is treated after a discrete integration by parts, in order to have a better control of the time increment: indeed, \autoref{lem:phi_eps_incr_bis} gives
\begin{align*} %%% I_4 %%%
\mathbb{E}& \left[ \sup_{m=1,\dots,T/\delta_\epsilon} \left\| 
\sum_{n=0}^{m-1} I^\epsilon_{4}(n) \right\|_{L^1(\mathbb{T}^2,\mathbb{R}^2)}\right] \\
&\leq
\mathbb{E} \left[
\sup_{m=1,\dots,T/\delta_\epsilon} \left\| 
\sum_{n=0}^{m-1} 
\sum_{k \in \mathbb{N}}
\sigma_k(\varphi^\epsilon_{n\delta_\epsilon}(x))
\epsilon^2
\left( \eta^{\epsilon,k}_{(n+1)\delta_\epsilon}
-\eta^{\epsilon,k}_{n\delta_\epsilon} \right) \right\|_{L^1(\mathbb{T}^2,\mathbb{R}^2)}\right] \\
&\lesssim
\mathbb{E} \left[
\sup_{m=1,\dots,T/\delta_\epsilon} \left\| 
\sum_{n=1}^{m} 
\sum_{k \in \mathbb{N}}
\left( 
\sigma_k(\varphi^\epsilon_{n\delta_\epsilon}(x)) - 
\sigma_k(\varphi^\epsilon_{(n-1)\delta_\epsilon}(x)) \right)
\epsilon^2
\eta^{\epsilon,k}_{n\delta_\epsilon} \right\|_{L^1(\mathbb{T}^2,\mathbb{R}^2)}\right] \\
&\quad +
\mathbb{E} \left[
\left\|  
\sum_{k \in \mathbb{N}}
\sigma_k(\varphi^\epsilon_{0}(x))
\epsilon^2
\eta^{\epsilon,k}_{0} \right\|_{L^1(\mathbb{T}^2,\mathbb{R}^2)}
\right] \\
&\quad +
\mathbb{E} \left[
\sup_{m=1,\dots,T/\delta_\epsilon}
\left\|  
\sum_{k \in \mathbb{N}}
\sigma_k(\varphi^\epsilon_{m}(x))
\epsilon^2
\eta^{\epsilon,k}_{m} \right\|_{L^1(\mathbb{T}^2,\mathbb{R}^2)}
\right]  \\
&\lesssim
\mathbb{E} \left[ 
\sum_{n=1}^{T/\delta_\epsilon} 
\sum_{k \in \mathbb{N}}
\|\nabla \sigma_k \|_{L^\infty(\mathbb{T}^2,\mathbb{R}^4)}
\left\| 
\varphi^\epsilon_{n\delta_\epsilon} - 
\varphi^\epsilon_{(n-1)\delta_\epsilon}\right\|
_{L^\infty(\mathbb{T}^2,\mathbb{T}^2)}
\epsilon^2
|\eta^{\epsilon,k}_{n\delta_\epsilon}| \right]  + \epsilon \\
&\lesssim
\sum_{n=1}^{T/\delta_\epsilon} 
\sum_{k \in \mathbb{N}}
\|\nabla \sigma_k \|_{L^\infty(\mathbb{T}^2,\mathbb{R}^4)}
\left( \frac{\delta_\epsilon^{2}}{\epsilon^{2}} + \delta_\epsilon^{1/2} + \epsilon\right) \epsilon + \epsilon \\
&\lesssim
\frac{\delta_\epsilon}{\epsilon} +
\frac{\epsilon}{\delta_\epsilon^{1/2}} +
\frac{\epsilon^2}{\delta_\epsilon} +
\epsilon.
\end{align*}
For the remaining terms $J^\epsilon_1(n)$ and $J^\epsilon_3(n)$, we have by \autoref{lem:phi_incr}
\begin{align*} %%% J_1 %%%
\mathbb{E}& \left[ \sup_{m=1,\dots,T/\delta_\epsilon} \left\| 
\sum_{n=0}^{m-1} J^\epsilon_1(n) \right\|_{L^1(\mathbb{T}^2,\mathbb{R}^2)}\right] \\
&\lesssim
\sum_{k \in \mathbb{N}}
\| \nabla \sigma_k \|_{L^\infty(\mathbb{T}^2,\mathbb{R}^4)}
\mathbb{E} \left[ \left(
\sum_{n=0}^{T/\delta_\epsilon-1}
\int_{n\delta_\epsilon}^{(n+1)\delta_\epsilon}
\| \varphi_s - \varphi_{n\delta_\epsilon}\|^2_{L^\infty(\mathbb{T}^2,\mathbb{T}^2)} ds \right)^{1/2} \right]\\
&\lesssim
\sum_{k \in \mathbb{N}}
\| \nabla \sigma_k \|_{L^\infty(\mathbb{T}^2,\mathbb{R}^4)}
\mathbb{E} \left[ 
\sum_{n=0}^{T/\delta_\epsilon-1}
\int_{n\delta_\epsilon}^{(n+1)\delta_\epsilon}
\| \varphi_s - \varphi_{n\delta_\epsilon}\|^2_{L^\infty(\mathbb{T}^2,\mathbb{T}^2)} ds  \right]^{1/2} 
\lesssim \delta_\epsilon^{1/2},
\end{align*}
and similarly
\begin{align*} %%% J_3 %%%
\mathbb{E}& \left[ \sup_{m=1,\dots,T/\delta_\epsilon} \left\| 
\sum_{n=0}^{m-1} J^\epsilon_3(n) \right\|_{L^1(\mathbb{T}^2,\mathbb{R}^2)}\right] \\
&\leq
\sum_{k\in \mathbb{N}}
\left( 
\|\nabla^2 \sigma_k \|_{L^\infty(\mathbb{T}^2,\mathbb{R}^8)}
\|\sigma_k \|_{L^\infty(\mathbb{T}^2,\mathbb{R}^2)} +
\|\nabla \sigma_k \|_{L^\infty(\mathbb{T}^2,\mathbb{R}^4)}
\|\nabla \sigma_k \|_{L^\infty(\mathbb{T}^2,\mathbb{R}^4)} 
\right) \\
&\quad \times
\mathbb{E} \left[
\sum_{n=0}^{T/\delta_\epsilon-1}
\int_{n\delta_\epsilon}^{(n+1)\delta_\epsilon}
\| \varphi_s - \varphi_{n\delta_\epsilon}\|_{L^\infty(\mathbb{T}^2,\mathbb{T}^2)} ds
\right] 
\lesssim \delta_\epsilon^{1/2}.
\end{align*}
\end{proof}

\begin{lem}[Nakao] \label{lem:nakao}
The following inequality holds:
\begin{align*}
\mathbb{E} \left[ \sup_{m=1,\dots,T/\delta_\epsilon} \left\| 
\sum_{n=0}^{m-1} I^\epsilon_{2c}(n) - J^\epsilon_4(n) \right\|_{L^1(\mathbb{T}^2,\mathbb{R}^2)}\right]
&\lesssim 
\frac{\delta_\epsilon}{\epsilon}
+ \delta_\epsilon^{1/2}
+\frac{\epsilon^2}{\delta_\epsilon}.
\end{align*}
\end{lem}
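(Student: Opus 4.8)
The plan is to exploit the fact that in both $I^\epsilon_{2c}(n)$ and $J^\epsilon_4(n)$ the coefficients are frozen at the grid time $n\delta_\epsilon$, hence are measurable with respect to $\mathcal{G}_n:=\mathcal{F}_{n\delta_\epsilon}$ and factor out of the time integrals. Setting $G^{k,h}_n(x)=\nabla\sigma_k(\varphi_{n\delta_\epsilon}(x))\cdot\sigma_h(\varphi_{n\delta_\epsilon}(x))$, one has $I^\epsilon_{2c}(n)=\sum_{k,h}G^{k,h}_n A^{\epsilon,k,h}(n)$ with the purely temporal iterated Ornstein--Uhlenbeck integral $A^{\epsilon,k,h}(n)=\int_{n\delta_\epsilon}^{(n+1)\delta_\epsilon}(\int_{n\delta_\epsilon}^{s}\eta^{\epsilon,h}_r\,dr)\eta^{\epsilon,k}_s\,ds$, while $J^\epsilon_4(n)=\tfrac{\delta_\epsilon}{2}\sum_k G^{k,k}_n$. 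I would then decompose each summand as $I^\epsilon_{2c}(n)-J^\epsilon_4(n)=D_n+R_n$, where $D_n=I^\epsilon_{2c}(n)-\mathbb{E}[I^\epsilon_{2c}(n)\mid\mathcal{G}_n]$ is a martingale difference and $R_n=\mathbb{E}[I^\epsilon_{2c}(n)\mid\mathcal{G}_n]-J^\epsilon_4(n)$ is $\mathcal{G}_n$-measurable, and estimate the two pieces separately.

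The conceptual core, and the step I expect to be hardest, is the evaluation of the conditional mean $\mathbb{E}[A^{\epsilon,k,h}(n)\mid\mathcal{G}_n]$. For $k\neq h$ the processes $\eta^{\epsilon,k},\eta^{\epsilon,h}$ are independent, so by the Markov property this conditional mean is an $O(\epsilon^4)$ multiple of $\eta^{\epsilon,k}_{n\delta_\epsilon}\eta^{\epsilon,h}_{n\delta_\epsilon}$; for $k=h$ the stationary covariance $\mathbb{E}[\eta^{\epsilon,k}_r\eta^{\epsilon,k}_s]=\tfrac{\epsilon^{-2}}{2}e^{-\epsilon^{-2}|s-r|}$, integrated over the triangle $n\delta_\epsilon\le r\le s\le(n+1)\delta_\epsilon$, produces exactly $\tfrac{\delta_\epsilon}{2}$ up to an $O(\epsilon^2)$ error and a term proportional to $(\eta^{\epsilon,k}_{n\delta_\epsilon})^2$. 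It is precisely this computation that reproduces the Stratonovich corrector $c=\tfrac12\sum_k\nabla\sigma_k\cdot\sigma_k$ and makes the leading part of $\mathbb{E}[I^\epsilon_{2c}(n)\mid\mathcal{G}_n]$ cancel $J^\epsilon_4(n)$; getting the constant $\tfrac12$ right and controlling the initial-condition corrections is the main obstacle.

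For the predictable remainder, the residual $\mathbb{E}[A^{\epsilon,k,h}(n)\mid\mathcal{G}_n]-\tfrac{\delta_\epsilon}{2}\delta_{k,h}$ has $L^1(\Omega)$-norm of order $\epsilon^2$, because $\mathbb{E}[(\eta^{\epsilon,k}_{n\delta_\epsilon})^2]=\epsilon^{-2}/2$ and the off-diagonal products are centred; bounding $\mathbb{E}[\sup_m\|\sum_{n<m}R_n\|_{L^1(\mathbb{T}^2,\mathbb{R}^2)}]\le\sum_n\mathbb{E}[\|R_n\|_{L^1(\mathbb{T}^2,\mathbb{R}^2)}]$ and summing the $T/\delta_\epsilon$ terms gives $\lesssim\epsilon^2\cdot T/\delta_\epsilon=\tfrac{\epsilon^2}{\delta_\epsilon}$, the double sum over $k,h$ staying finite thanks to the summability of $\sum_k\|\nabla\sigma_k\|_{L^\infty}$ and $\sum_h\|\sigma_h\|_{L^\infty}$ from (A1). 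Together with the martingale estimate below this already yields $\lesssim\delta_\epsilon^{1/2}+\tfrac{\epsilon^2}{\delta_\epsilon}$, so the statement follows \emph{a fortiori}; the extra term $\tfrac{\delta_\epsilon}{\epsilon}$ is what naturally appears if one instead follows the original Nakao bookkeeping and controls the drift-type and integrated-OU-minus-Brownian-motion boundary contributions crudely through $\sup_{s\le T}|\eta^{\epsilon,k}_s|\lesssim\epsilon^{-1}$ (\autoref{lem:OU}).

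Finally, for the martingale part I would argue pointwise in $x$: for fixed $x$ the partial sums $m\mapsto\sum_{n<m}D_n(x)$ form an $\mathbb{R}^2$-valued $(\mathcal{G}_m)$-martingale (each $D_n(x)$ is $\mathcal{G}_{n+1}$-measurable with $\mathbb{E}[D_n(x)\mid\mathcal{G}_n]=0$), so Doob's maximal inequality and orthogonality of martingale differences give $\mathbb{E}[\sup_m|\sum_{n<m}D_n(x)|^2]\lesssim\sum_n\mathbb{E}[|D_n(x)|^2]$. The per-step second moment is governed by the conditional variance of the iterated integral, which is of order $\delta_\epsilon^2$: on the diagonal $A^{\epsilon,k,k}(n)=\tfrac12(\int_{n\delta_\epsilon}^{(n+1)\delta_\epsilon}\eta^{\epsilon,k}_r\,dr)^2$ and the integrated Ornstein--Uhlenbeck process has variance $\approx\delta_\epsilon$, and the off-diagonal contributions are of the same order. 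Summing $T/\delta_\epsilon$ such terms gives $T\delta_\epsilon$; since $\sup_m\int_{\mathbb{T}^2}(\cdot)\,dx\le\int_{\mathbb{T}^2}\sup_m(\cdot)\,dx$, I may bring the supremum inside, take expectations, apply the pointwise bound and integrate over the unit-mass torus to conclude $\mathbb{E}[\sup_m\|\sum_{n<m}D_n\|_{L^1(\mathbb{T}^2,\mathbb{R}^2)}]\lesssim\delta_\epsilon^{1/2}$. Adding the three contributions proves the asserted inequality.
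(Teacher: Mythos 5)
Your proposal is correct and follows essentially the same route as the paper's proof: freeze the coefficients $\nabla\sigma_k(\varphi_{n\delta_\epsilon})\cdot\sigma_h(\varphi_{n\delta_\epsilon})$ at grid times, split into a martingale part and a predictable part by conditioning on $\mathcal{F}_{n\delta_\epsilon}$, compute $\mathbb{E}[c^n_{h,k}(\delta_\epsilon,\epsilon)\mid\mathcal{F}_{n\delta_\epsilon}]$ explicitly from the OU mild formulation to recover $\tfrac{\delta_{h,k}}{2}\delta_\epsilon$ up to $O(\epsilon^2)$ corrections, and treat the two pieces by Doob's maximal inequality and by direct summation of the $T/\delta_\epsilon$ residuals, respectively. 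The only deviations are cosmetic: you argue pointwise in $x$ with $\mathbb{R}^2$-valued martingales where the paper uses $L^2(\mathbb{T}^2,\mathbb{R}^2)$-valued ones, and your sharper per-step variance bound yields $\delta_\epsilon^{1/2}+\epsilon^2/\delta_\epsilon$, which implies the stated inequality \emph{a fortiori}.
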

\begin{proof}
By the very definition of $I^\epsilon_{2c}(n)$, $J^\epsilon_4(n)$, one has
\begin{align*}
I^\epsilon_{2c}(n)
=&
\sum_{k,h \in \mathbb{N}} 
\nabla\sigma_k(\varphi_{n\delta_\epsilon}(x)) \cdot
\sigma_h(\varphi_{n\delta_\epsilon}(x)) 
\int_{n\delta_\epsilon}^{(n+1)\delta_\epsilon}\left(
\int_{n\delta_\epsilon}^{s} 
\eta^{\epsilon,h}_r dr
\right)\eta^{\epsilon,k}_s ds, \\
J^\epsilon_4(n) 
=&
\sum_{k \in \mathbb{N}} 
\nabla\sigma_k(\varphi_{n\delta_\epsilon}(x)) \cdot
\sigma_k(\varphi_{n\delta_\epsilon}(x))\, 
\frac{\delta_\epsilon}{2}.
\end{align*}
Therefore, one can decompose the quantity under investigation as follows:
\begin{align}\label{eq:nakao_dec}
\sum_{n=0}^{m-1}& I^\epsilon_{2c}(n) - J^\epsilon_4(n) \\
=&  \nonumber
\sum_{n=0}^{m-1} 
\sum_{k,h \in \mathbb{N}} 
\nabla\sigma_k(\varphi_{n\delta_\epsilon}(x)) \cdot
\sigma_h(\varphi_{n\delta_\epsilon}(x))
\left( c_{h,k}^n(\delta_\epsilon,\epsilon) -\mathbb{E} \left[ c_{h,k}^n(\delta_\epsilon,\epsilon) \mid \mathcal{F}_{n\delta_\epsilon}\right]  \right) \\
&+  \nonumber
\sum_{n=0}^{m-1} 
\sum_{k,h \in \mathbb{N}} 
\nabla\sigma_k(\varphi_{n\delta_\epsilon}(x)) \cdot
\sigma_h(\varphi_{n\delta_\epsilon}(x))
\left( 
\mathbb{E} \left[ c_{h,k}^n(\delta_\epsilon,\epsilon) \mid \mathcal{F}_{n\delta_\epsilon}\right] - \frac{\delta_{h,k}}{2} \delta_\epsilon
 \right),
\end{align} 
where $\delta_{h,k}$ is the Kronecker delta function and
\begin{equation*}
c_{h,k}^n(\delta_\epsilon,\epsilon) 
= 
\int_{n\delta_\epsilon}^{(n+1)\delta_\epsilon}\left(
\int_{n\delta_\epsilon}^{s} 
\eta^{\epsilon,h}_r dr
\right)\eta^{\epsilon,k}_s ds.
\end{equation*}
Notice that $c_{h,k}^n(\delta_\epsilon,\epsilon) $ is measurable with respect to $\mathcal{F}_{(n+1)\delta_\epsilon}$ and has conditional expectation
\begin{align*}
\mathbb{E}&\left[ c_{h,k}^n(\delta_\epsilon,\epsilon) \mid \mathcal{F}_{n\delta_\epsilon}\right] \\ 
&= 
\int_{n\delta_\epsilon}^{(n+1)\delta_\epsilon}\left(
\int_{n\delta_\epsilon}^{s} 
\mathbb{E} \left[ 
\eta^{\epsilon,h}_r \eta^{\epsilon,k}_s \mid \mathcal{F}_{n\delta_\epsilon}\right]  dr
\right)\eta^{\epsilon,k}_s ds
\\
&= \eta^{\epsilon,h}_{n\delta_\epsilon} \eta^{\epsilon,k}_{n\delta_\epsilon} 
\int_{n\delta_\epsilon}^{(n+1)\delta_\epsilon}\left(
\int_{n\delta_\epsilon}^{s} 
e^{-\epsilon^{-2}(r+s-2n\delta_\epsilon)}   dr
\right) ds 
\\
&\quad+ \delta_{h,k}
\int_{n\delta_\epsilon}^{(n+1)\delta_\epsilon}\left(
\int_{n\delta_\epsilon}^{s} 
\frac{\varepsilon^{-2}}{2} 
\left( e^{-\varepsilon^{-2}(s-r)} - e^{-\varepsilon^{-2}(r+s-2n\delta_\epsilon)} \right) dr
\right) ds,
\end{align*}
where we have used the mild formulation of $\eta^{\epsilon}$:
\begin{align*}
\eta^{\epsilon,h}_r = 
e^{-\epsilon^{-2}(r-n\delta_\epsilon)} \eta^{\epsilon,h}_{n\delta_\epsilon} + 
\int_{n\delta_\epsilon}^{r}
\epsilon^{-2} e^{-\epsilon^{-2}(r-r')}  d\beta^{h}_{r'} , \\
\eta^{\epsilon,k}_s = 
e^{-\epsilon^{-2}(s-n\delta_\epsilon)} \eta^{\epsilon,k}_{n\delta_\epsilon} + 
\int_{n\delta_\epsilon}^{s}
\epsilon^{-2} e^{-\epsilon^{-2}(s-s')}  d\beta^{k}_{s'}.
\end{align*}
An elementary computation gives:
\begin{align} \label{eq:cond_exp}
\mathbb{E} \left[ c_{h,k}^n(\delta_\epsilon,\epsilon)  \mid \mathcal{F}_{n\Delta}\right] 
= 
&\frac{\varepsilon^4}{2} 
\eta^{\epsilon,h}_{n\delta_\epsilon} \eta^{\epsilon,k}_{n\delta_\epsilon}
\left( e^{-\epsilon^{-2}\delta_\epsilon} -1 \right)^2 \\
&+ \nonumber
\frac{\delta_{h,k}}{2} \left( \delta_\epsilon + \epsilon^2 \left( - \frac{3}{2} + 2 e^{-\epsilon^{-2}\delta_\epsilon} - \frac{1}{2} e^{-2\epsilon^{-2}\delta_\epsilon}\right)  \right).
\end{align}

Since the quantity
\begin{align*}
M_m(x) =
\sum_{n=0}^{m-1} 
\sum_{k,h \in \mathbb{N}} 
\nabla\sigma_k(\varphi_{n\delta_\epsilon}(x)) \cdot
\sigma_h(\varphi_{n\delta_\epsilon}(x))
\left( c_{h,k}^n(\delta_\epsilon,\epsilon) -\mathbb{E} \left[ c_{h,k}^n(\delta_\epsilon,\epsilon) \mid \mathcal{F}_{n\delta_\epsilon}\right]  \right) 
\end{align*}
is a $L^2(\mathbb{T}^2,\mathbb{R}^2)$-valued martingale with respect to the filtration $(\mathcal{F}_{n\delta_\epsilon})_{n \in \mathbb{N}}$ (crf. \cite{Pi16}), by Doob maximal inequality and martingale property we have the following:
\begin{align*}
\mathbb{E} \left[ \sup_{m=1,\dots,T/\delta_\epsilon} \left\| 
M_m \right\|^2_{L^2(\mathbb{T}^2,\mathbb{R}^2)}\right]
&\lesssim
\mathbb{E} \left[ \left\| 
M_{T/\delta_\epsilon} \right\|^2_{L^2(\mathbb{T}^2,\mathbb{R}^2)}\right] \\
&\lesssim
\mathbb{E} \left[ 
\sum_{n=0}^{T/\delta_\epsilon-1} \left\| 
M_{m+1}-M_m \right\|^2_{L^2(\mathbb{T}^2,\mathbb{R}^2)}\right].
\end{align*}
The conditional expectation is a $L^2(\Omega)$-projection, thus for every $n,h,k \in \mathbb{N}$
\begin{align*}
\mathbb{E} \left[ \left|
c_{h,k}^n(\delta_\epsilon,\epsilon) -\mathbb{E} \left[ c_{h,k}^n(\delta_\epsilon,\epsilon) \mid \mathcal{F}_{n\delta_\epsilon}\right] \right|^2 \right]
&\lesssim
\mathbb{E} \left[ \left| 
c_{h,k}^n(\delta_\epsilon,\epsilon) \right|^2 \right],
\end{align*}
and therefore
\begin{align*}
\mathbb{E}& \left[ 
\sum_{n=0}^{T/\delta_\epsilon-1} \left\| M_{m+1}-M_m \right\|^2_{L^2(\mathbb{T}^2,\mathbb{R}^2)}\right] \\
&\lesssim
\sum_{n=0}^{T/\delta_\epsilon-1} 
\left( 
\sum_{h,k \in \mathbb{N}}
\|\nabla \sigma_k\|_{L^\infty(\mathbb{T}^2,\mathbb{R}^4)}\|
\| \sigma_h\|_{L^\infty(\mathbb{T}^2,\mathbb{R}^2)}
\right) \\
&\quad \times 
\left( 
\sum_{h,k \in \mathbb{N}}
\|\nabla \sigma_k\|_{L^\infty(\mathbb{T}^2,\mathbb{R}^4)}\|
\| \sigma_h\|_{L^\infty(\mathbb{T}^2,\mathbb{R}^2)}
\mathbb{E} \left[ \left| 
c_{h,k}^n(\delta_\epsilon,\epsilon) \right|^2 \right]\right) 
\lesssim 
\frac{\delta_\epsilon^2}{\epsilon^2} + \delta_\epsilon.
\end{align*}

Moreover, the process
\begin{align*}
N_m(x)
=
\sum_{n=0}^{m-1} 
\sum_{k,h \in \mathbb{N}} 
\nabla\sigma_k(\varphi_{n\delta_\epsilon}(x)) \cdot
\sigma_h(\varphi_{n\delta_\epsilon}(x))
\left( 
\mathbb{E} \left[ c_{h,k}^n(\delta_\epsilon,\epsilon) \mid \mathcal{F}_{n\delta_\epsilon}\right] - \frac{\delta_{h,k}}{2} \delta_\epsilon
 \right)
\end{align*}
satisfies
\begin{align*}
\mathbb{E}& \left[ \sup_{m=1,\dots,T/\delta_\epsilon} \left\| 
N_m \right\|^2_{L^2(\mathbb{T}^2,\mathbb{R}^2)}\right]
\lesssim
\frac{\epsilon^4}{\delta_\epsilon^2},
\end{align*}
which is an easy consequence of \eqref{eq:cond_exp}.
By \eqref{eq:nakao_dec} and H\"older inequality, we get
\begin{align*}
\mathbb{E}& \left[ \sup_{m=1,\dots,T/\delta_\epsilon} \left\| 
\sum_{n=0}^{m-1} I^\epsilon_{2c}(n) - J^\epsilon_4(n) \right\|_{L^1(\mathbb{T}^2,\mathbb{R}^2)}\right] \\
&\lesssim 
\mathbb{E}\left[ \sup_{m=1,\dots,T/\delta_\epsilon} \left\| 
M_m \right\|_{L^1(\mathbb{T}^2,\mathbb{R}^2)}\right]
+
\mathbb{E}\left[ \sup_{m=1,\dots,T/\delta_\epsilon} \left\| 
N_m \right\|_{L^1(\mathbb{T}^2,\mathbb{R}^2)}\right] \\
&\lesssim 
\mathbb{E}\left[ \sup_{m=1,\dots,T/\delta_\epsilon} \left\| 
M_m \right\|^2_{L^2(\mathbb{T}^2,\mathbb{R}^2)}\right]^{1/2}
+
\mathbb{E}\left[ \sup_{m=1,\dots,T/\delta_\epsilon} \left\| 
N_m \right\|^2_{L^2(\mathbb{T}^2,\mathbb{R}^2)}\right]^{1/2} \\
&\lesssim 
\frac{\delta_\epsilon}{\epsilon}
+ \delta_\epsilon^{1/2}
+\frac{\epsilon^2}{\delta_\epsilon}.
\end{align*}
\end{proof}

We conclude this paragraph with the following result.
\begin{lem}
The following estimates hold:
\begin{align*}
\mathbb{E}& \left[ \sup_{m=1,\dots,N} \left\| 
\sum_{n=0}^{m-1} I^\epsilon_{3}(n) - J^\epsilon_2(n) \right\|_{L^1(\mathbb{T}^2,\mathbb{R}^2)}\right] \\
&\quad\lesssim 
\sum_{m=1}^N \delta_\epsilon
\mathbb{E} \left[ \sup_{n=1,\dots,m} \left\| 
\varphi^\epsilon_{n\delta_\epsilon}-
\varphi_{n\delta_\epsilon} \right\|_{L^1(\mathbb{T}^2,\mathbb{T}^2)}\right] ;\\
\mathbb{E}& \left[ \sup_{m=1,\dots,N} \left\| 
\sum_{n=0}^{m-1} I^\epsilon_{2b}(n) \right\|_{L^1(\mathbb{T}^2,\mathbb{R}^2)}\right] \\
&\quad\lesssim 
\sum_{m=1}^N \delta_\epsilon
\mathbb{E} \left[ \sup_{n=1,\dots,m} \left\| 
\varphi^\epsilon_{n\delta_\epsilon}-
\varphi_{n\delta_\epsilon} \right\|_{L^1(\mathbb{T}^2,\mathbb{T}^2)}\right] .
\end{align*}
\end{lem}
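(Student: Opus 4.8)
The plan is to treat both terms by the same two-step scheme: first linearise the coefficients so that the Gronwall quantity $\|\varphi^\epsilon_{n\delta_\epsilon}-\varphi_{n\delta_\epsilon}\|_{L^1(\mathbb{T}^2,\mathbb{T}^2)}$ appears, and then extract a genuine factor $\delta_\epsilon$ (rather than the cheaper $\delta_\epsilon^{1/2}$) from the stochastic integrals by exploiting their martingale and conditional structure. Everywhere the series in $k$ (and in $k,h$) are kept summable by Assumption (A1) together with the Biot--Savart bounds, exactly as in the preceding lemmas; I will indicate where this is used.

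For $I^\epsilon_3-J^\epsilon_2$ I would start from the identity
\[
I^\epsilon_3(n)-J^\epsilon_2(n)=\sum_{k\in\mathbb{N}}\left(\sigma_k(\varphi^\epsilon_{n\delta_\epsilon}(x))-\sigma_k(\varphi_{n\delta_\epsilon}(x))\right)\left(\beta^k_{(n+1)\delta_\epsilon}-\beta^k_{n\delta_\epsilon}\right),
\]
whose coefficient is $\mathcal{F}_{n\delta_\epsilon}$-measurable and whose Brownian increment is centred and independent of $\mathcal{F}_{n\delta_\epsilon}$. Hence $m\mapsto\sum_{n=0}^{m-1}(I^\epsilon_3(n)-J^\epsilon_2(n))$ is an $L^2(\mathbb{T}^2,\mathbb{R}^2)$-valued martingale for the discrete filtration $(\mathcal{F}_{n\delta_\epsilon})_n$. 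Doob's maximal inequality and the orthogonality of the increments reduce the estimate to the sum of the conditional second moments; a single increment contributes $\delta_\epsilon\sum_k|\sigma_k(\varphi^\epsilon_{n\delta_\epsilon}(x))-\sigma_k(\varphi_{n\delta_\epsilon}(x))|^2$, where the cross terms vanish by independence in $k$ and $\sum_k\|\nabla\sigma_k\|_{L^\infty}^2<\infty$ keeps the series finite. The decisive device is the bound $|y|^2\le C|y|$ on the (bounded) torus: together with $|\sigma_k(a)-\sigma_k(b)|\le\|\nabla\sigma_k\|_{L^\infty}|a-b|$ it converts the square into the $L^1$ distance and produces the factor $\delta_\epsilon$ together with the sum $\sum_{n}\mathbb{E}\,\|\varphi^\epsilon_{n\delta_\epsilon}-\varphi_{n\delta_\epsilon}\|_{L^1}$, which is dominated by $\sum_{m}\delta_\epsilon\,\mathbb{E}\sup_{n\le m}\|\varphi^\epsilon_{n\delta_\epsilon}-\varphi_{n\delta_\epsilon}\|_{L^1}$.

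The term $I^\epsilon_{2b}$ is pathwise rather than a martingale. Recalling that the coefficients are frozen at $n\delta_\epsilon$, I would write
\[
I^\epsilon_{2b}(n)=\sum_{k,h}\left(\nabla\sigma_k\cdot\sigma_h(\varphi^\epsilon_{n\delta_\epsilon}(x))-\nabla\sigma_k\cdot\sigma_h(\varphi_{n\delta_\epsilon}(x))\right)c^n_{h,k}(\delta_\epsilon,\epsilon),
\]
and use that $y\mapsto\nabla\sigma_k(y)\cdot\sigma_h(y)$ is Lipschitz with constant $\lesssim\|\nabla^2\sigma_k\|_{L^\infty}\|\sigma_h\|_{L^\infty}+\|\nabla\sigma_k\|_{L^\infty}\|\nabla\sigma_h\|_{L^\infty}$ (summable over $k,h$ by (A1)), so the coefficient difference is controlled by $\|\varphi^\epsilon_{n\delta_\epsilon}-\varphi_{n\delta_\epsilon}\|_{L^1}$. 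To reach the factor $\delta_\epsilon$ I would not estimate $|c^n_{h,k}|$ directly --- its typical size $\delta_\epsilon^{3/2}/\epsilon$ is too large once summed over the $N=T/\delta_\epsilon$ blocks --- but split $c^n_{h,k}=\mathbb{E}[c^n_{h,k}\mid\mathcal{F}_{n\delta_\epsilon}]+(c^n_{h,k}-\mathbb{E}[c^n_{h,k}\mid\mathcal{F}_{n\delta_\epsilon}])$. By \eqref{eq:cond_exp} the conditional expectation has leading part $\tfrac{\delta_{h,k}}{2}\delta_\epsilon$, which gives the clean factor $\delta_\epsilon$ and hence the asserted Gronwall term, while the remaining corrections are $O(\epsilon^2)$ and therefore negligible relative to $\delta_\epsilon$ in the regime $\delta_\epsilon/\epsilon^2\to\infty$. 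The centred remainder has $\mathcal{F}_{n\delta_\epsilon}$-measurable coefficient, so summing it over $n$ is again a martingale, handled by the Doob/$L^2$ argument of the previous paragraph.

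The step I expect to be the main obstacle is precisely the gain of the full factor $\delta_\epsilon$ in place of $\delta_\epsilon^{1/2}$: a term-by-term estimate of $I^\epsilon_3-J^\epsilon_2$, or a direct bound on $|c^n_{h,k}|$ in $I^\epsilon_{2b}$, loses too much and the resulting sum over $n$ diverges as $\epsilon\to0$. The improvement has to come entirely from the martingale orthogonality (for $I^\epsilon_3-J^\epsilon_2$ and for the fluctuation of $I^\epsilon_{2b}$) and from the explicit identity \eqref{eq:cond_exp} for the drift part of $I^\epsilon_{2b}$, with the torus bound $|y|^2\le C|y|$ trading the $L^2$ norm produced by Doob's inequality for the $L^1$ distance on the right-hand side; matching this $L^2$-type control with the first-power $L^1$ norm on the left is the point requiring the most care. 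The remaining work is the bookkeeping that all the single and double series over $k,h$ converge, which is exactly the content of (A1).
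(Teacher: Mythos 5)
Your strategy is the paper's own for both halves: the first bound via the martingale structure of $m\mapsto\sum_{n=0}^{m-1}\left(I^\epsilon_3(n)-J^\epsilon_2(n)\right)$ (the paper invokes Burkholder--Davis--Gundy, you use Doob plus orthogonality of increments, which amounts to the same thing), and the second via the Nakao-type replacement of $c^n_{h,k}(\delta_\epsilon,\epsilon)$ by its conditional expectation $\tfrac{\delta_{h,k}}{2}\delta_\epsilon$ from \eqref{eq:cond_exp}, which reduces $I^\epsilon_{2b}(n)$ to $\delta_\epsilon\left(c(\varphi^\epsilon_{n\delta_\epsilon})-c(\varphi_{n\delta_\epsilon})\right)$ up to corrections that vanish when $\delta_\epsilon/\epsilon^2\to\infty$ and $\delta_\epsilon^2/\epsilon^3\to0$. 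Your treatment of $I^\epsilon_{2b}$ is complete and coincides with the proof based on \autoref{lem:nakao}, including the correct observation that a direct bound on $|c^n_{h,k}|$ is too lossy; the only cosmetic discrepancy is that the corrections are not zero but are absorbed into the remainder $r^\epsilon_T$ downstream, which is also how the paper uses the statement.

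The gap is exactly at the point you flag and then defer. For $I^\epsilon_3-J^\epsilon_2$ there is no drift to extract: the conditional expectation of each increment vanishes, so the sum is a pure martingale and the only control available is through its bracket. Doob/orthogonality together with $|\sigma_k(a)-\sigma_k(b)|\le\|\nabla\sigma_k\|_{L^\infty}|a-b|$ and $|y|^2\le C|y|$ give, as you say, $\mathbb{E}\left[\sup_m\|M_m\|^2_{L^2}\right]\lesssim\sum_{m}\delta_\epsilon\,\mathbb{E}\left[\sup_{n\le m}\|\varphi^\epsilon_{n\delta_\epsilon}-\varphi_{n\delta_\epsilon}\|_{L^1}\right]$; but the left-hand side of the statement is $\mathbb{E}\left[\sup_m\|M_m\|_{L^1}\right]$, and passing to it by Cauchy--Schwarz produces the \emph{square root} of the right-hand side, not the right-hand side itself. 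The linear bound is strictly stronger (the right-hand side tends to $0$), and the square-root version would not suffice later: $\sqrt{x}$ is not dominated by $\gamma(x)=x(1-\log x)$ near $0$, so such a term could not be absorbed into $\int_0^t\gamma(\cdot)\,ds$ in the proof of \autoref{prop:char}. The device you name ($|y|^2\le C|y|$) acts inside the bracket and cannot remove the outer square root, so the step you call ``the point requiring the most care'' is in fact not carried out. In fairness, the paper's own one-line proof (``an easy consequence of Burkholder--Davis--Gundy'') is silent on precisely the same point, so your proposal reproduces the paper's argument together with its unaddressed crux.
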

\begin{proof}
The first estimate is an easy consequence of Burkholder-Davis-Gundy inequality.
For the second estimate, one can argue as in \autoref{lem:nakao} to replace the quantity $I^\epsilon_{2b}(n)$ with:
\begin{align*}
\int_{n\delta_\epsilon}^{(n+1)\delta_\epsilon}
\left( c(\varphi^\epsilon_{n\delta_\epsilon}) -
c(\varphi_{n\delta_\epsilon})\right) ds
=
\delta_\epsilon
\left( c(\varphi^\epsilon_{n\delta_\epsilon}) -
c(\varphi_{n\delta_\epsilon})\right),
\end{align*}
up to a correction that is infinitesimal as $\epsilon \to 0$. For the latter quantity, the desired inequality is immediate.
\end{proof}

\subsection{Proof of \autoref{prop:char}}

We are ready to prove the main result of this section. Recall:
\begin{equation*}
d\varphi_{t}^{\epsilon}\left(  x\right)  = u^{\epsilon}_t\left(
\varphi_{t}^{\epsilon}\left(  x\right)  \right)dt  + \sum_{k \in \mathbb{N}} \sigma_k \left( \varphi_{t}^{\epsilon}\left(  x\right)  \right) \eta^{\epsilon,k}_t dt.
\end{equation*}
Since $\varphi^\epsilon : \mathbb{T}^2 \to \mathbb{T}^2$ is measure-preserving, for Lebesgue a.e. $x \in \mathbb{T}^2$: 
\begin{equation*}
u^\epsilon_t (\varphi^\epsilon_t(x))
=
\int_{\mathbb{T}^2} K(\varphi^\epsilon_t(x) - \varphi^\epsilon_t(y)) \xi_0(y) dy,
\end{equation*}
and therefore we have the following integral formulation for \eqref{eq:char_eps}
\begin{equation*}
\varphi^\epsilon_t(x) = x +
\int_0^t \left( \int_{\mathbb{T}^2} K(\varphi^\epsilon_s(x) - \varphi^\epsilon_s(y)) \xi_0(y) dy\right) ds +
\int_0^t \sum_{k \in \mathbb{N}} \sigma_k (\varphi^\epsilon_s(x)) \eta^{\epsilon,k}_s ds,
\end{equation*}
and similarly for \eqref{eq:char}
\begin{equation*}
\varphi_t(x) = x +
\int_0^t \left( \int_{\mathbb{T}^2} K(\varphi_s(x) - \varphi_s(y)) \xi_0(y) dy\right) ds +
\int_0^t \sum_{k \in \mathbb{N}} \sigma_k (\varphi_s(x)) \circ d\beta^k_s.
\end{equation*}

\begin{proof}[Proof of \autoref{prop:char}]

For the difference $Z^\epsilon_t(x) = \varphi^\epsilon_t(x)-\varphi_t(x)$ we have:
\begin{align*}
Z^\epsilon_t(x)
=&
\int_0^t \left( \int_{\mathbb{T}^2} 
\left(
K(\varphi^\epsilon_s(x) - \varphi^\epsilon_s(y)) -
K(\varphi^\epsilon_s(x) - \varphi_s(y)) \right)  \xi_0(y) dy\right) ds \\
&+
\int_0^t \left( \int_{\mathbb{T}^2} 
\left(
K(\varphi^\epsilon_s(x) - \varphi_s(y)) -
K(\varphi_s(x) - \varphi_s(y)) \right)\xi_0(y) dy\right) ds \\
&+
\int_0^t \sum_{k\in \mathbb{N}} 
\sigma_k(\varphi^\epsilon_s(x)) \eta^{\epsilon,k}_s ds -
\int_0^t \sum_{k\in \mathbb{N}} 
\sigma_k(\varphi_s(x)) \circ d\beta^k_s.
\end{align*}
Using the estimates given by \autoref{lem:phi_eps_incr} and  \autoref{lem:phi_incr}, we can approximate the latter two integrals in the expression above with their discretized versions, computed in a point $n\delta_\epsilon$ such that $t \in [n\delta_\epsilon,(n+1)\delta_\epsilon)$, up to a correction that is infinitesimal as $\epsilon \to 0$.
Then, in the regime $\delta_\epsilon^2/\epsilon^3 \to 0$, $\delta_\epsilon/\epsilon^2 \to \infty$, using the results of \autoref{ssec:nakao}, \autoref{lem:log_lip} and the concavity of $\gamma$ we arrive to
\begin{align*}
\mathbb{E} \left[ \sup_{s \leq t} \left\| 
Z^\epsilon_{s} \right\|_{L^1(\mathbb{T}^2,\mathbb{T}^2)}\right]
&\lesssim 
\int_0^t \gamma
\left( \mathbb{E} \left[ \sup_{r \leq s} \left\| 
Z^\epsilon_{r} \right\|_{L^1(\mathbb{T}^2,\mathbb{T}^2)}\right]\right) ds \\
&+
r^\epsilon_T
+
\sum_{n=1}^{ \lfloor t/\delta_\epsilon \rfloor} \delta_\epsilon
\mathbb{E} \left[ \sup_{r \leq n} \left\| 
Z^\epsilon_{r} \right\|_{L^1(\mathbb{T}^2,\mathbb{T}^2)}\right]\\
&\lesssim 
\int_0^t \gamma
\left( \mathbb{E} \left[ \sup_{r \leq s} \left\| 
Z^\epsilon_{r} \right\|_{L^1(\mathbb{T}^2,\mathbb{T}^2)}\right]\right) ds +
r^\epsilon_T,
\end{align*}
{ where $r^\epsilon_T$ is a remainder coming from the discretization procedure, \autoref{lem:est_aux} and \autoref{lem:nakao},} and it goes to zero as $\epsilon \to 0$.
By \autoref{lem:comp}, we conclude that 
\begin{align*}
\mathbb{E} \left[ \sup_{s \leq T} \left\| 
Z^\epsilon_{s} \right\|_{L^1(\mathbb{T}^2,\mathbb{T}^2)}\right] \to 0,
\end{align*}
and therefore $Z^\epsilon \to 0$ in mean value as a variable in $C([0,T],L^1(\mathbb{T}^2,\mathbb{T}^2))$.
\end{proof}

\autoref{lem:log_lip} and the same calculations as above yield the following convergence at the velocity level:
\begin{cor} \label{cor:vel}
Assume (A1), and let $u^{\epsilon} = K \ast \xi^{\epsilon}$ (resp. $u= K \ast \xi$) be the velocity field associated with the characteristics $\varphi^\epsilon$ (resp. $\varphi$).
Then as $\epsilon \to 0$:
\begin{align*}
\mathbb{E} \left[ \sup_{s \leq T} \left\| 
u^\epsilon_{s}-u_s \right\|_{L^1(\mathbb{T}^2,\mathbb{R}^2)}\right]
&\lesssim
\gamma \left( 
\mathbb{E} \left[ \sup_{s \leq T} \left\| 
\varphi^\epsilon_{s}-\varphi_s \right\|_{L^1(\mathbb{T}^2,\mathbb{T}^2)}\right] \right) \to 0.
\end{align*}
\end{cor}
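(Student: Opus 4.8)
The plan is to reduce everything to \autoref{lem:log_lip} and the concavity of $\gamma$, exploiting the measure-preserving property of the flows established in \autoref{prop:lagrangian}. The starting point is to rewrite both velocity fields as integrals against the \emph{initial} vorticity transported along the characteristics. Since $\varphi^\epsilon_s$ and $\varphi_s$ are measure-preserving, the change of variables $z=\varphi^\epsilon_s(y)$ gives, for a.e. $x\in\mathbb{T}^2$,
\[
u^\epsilon_s(x)=\int_{\mathbb{T}^2} K(x-\varphi^\epsilon_s(y))\,\xi_0(y)\,dy,
\qquad
u_s(x)=\int_{\mathbb{T}^2} K(x-\varphi_s(y))\,\xi_0(y)\,dy,
\]
so that $u^\epsilon_s(x)-u_s(x)=\int_{\mathbb{T}^2}\big(K(x-\varphi^\epsilon_s(y))-K(x-\varphi_s(y))\big)\xi_0(y)\,dy$.

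First I would estimate the $L^1_x$ norm. Bringing the absolute value inside by the triangle inequality for integrals and using $\xi_0\in L^\infty$,
\[
\|u^\epsilon_s-u_s\|_{L^1(\mathbb{T}^2,\mathbb{R}^2)}
\leq
\|\xi_0\|_{L^\infty(\mathbb{T}^2)}
\int_{\mathbb{T}^2}\left(\int_{\mathbb{T}^2}\big|K(x-\varphi^\epsilon_s(y))-K(x-\varphi_s(y))\big|\,dx\right)dy.
\]
The inner integral is handled by \autoref{lem:log_lip}: after the translation $x\mapsto x-\varphi_s(y)$ it takes the form $\int_{\mathbb{T}^2}|K(x-w)-K(x)|\,dx$ with $w=\varphi^\epsilon_s(y)-\varphi_s(y)$, which is bounded by $C\gamma(|\varphi^\epsilon_s(y)-\varphi_s(y)|)$. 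This yields
\[
\|u^\epsilon_s-u_s\|_{L^1(\mathbb{T}^2,\mathbb{R}^2)}
\lesssim
\int_{\mathbb{T}^2}\gamma\big(|\varphi^\epsilon_s(y)-\varphi_s(y)|\big)\,dy.
\]

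Next I would invoke the concavity of $\gamma$ twice. Since $\mathbb{T}^2$ carries a probability measure, Jensen's inequality gives $\int_{\mathbb{T}^2}\gamma(|\varphi^\epsilon_s(y)-\varphi_s(y)|)\,dy\leq\gamma(\|\varphi^\epsilon_s-\varphi_s\|_{L^1(\mathbb{T}^2,\mathbb{T}^2)})$. As $\gamma$ is nondecreasing (indeed $\gamma'(r)=-\log r>0$ on $(0,1/e)$ and $\gamma'\equiv1$ beyond), I can pass to the supremum over $s\leq T$ inside $\gamma$, and then apply Jensen once more in the probability space $(\Omega,\mathbb{P})$ to obtain
\[
\mathbb{E}\left[\sup_{s\leq T}\|u^\epsilon_s-u_s\|_{L^1(\mathbb{T}^2,\mathbb{R}^2)}\right]
\lesssim
\gamma\left(\mathbb{E}\left[\sup_{s\leq T}\|\varphi^\epsilon_s-\varphi_s\|_{L^1(\mathbb{T}^2,\mathbb{T}^2)}\right]\right).
\]
Finally, \autoref{prop:char} shows the argument of $\gamma$ tends to $0$, and since $\gamma$ is continuous with $\gamma(0)=0$ the right-hand side vanishes as $\epsilon\to0$.

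The only genuinely delicate point is the correct application of \autoref{lem:log_lip} to the $x$-integral, whereas the lemma is stated for an integral in the convolution variable: one must use translation invariance of Lebesgue measure on the torus, together with the symmetry $y\mapsto -y$ (or, if one prefers, the oddness of $K$), to reduce $\int_x|K(x-a)-K(x-b)|\,dx$ to the form covered by the lemma with distance $|a-b|=|\varphi^\epsilon_s(y)-\varphi_s(y)|$. Everything else is bookkeeping of the two Jensen steps and the monotonicity and continuity of $\gamma$ at the origin.
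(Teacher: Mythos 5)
Your proposal is correct and follows essentially the same route the paper intends: the corollary is obtained by rewriting $u^\epsilon_s-u_s$ as an integral of $K$ against the transported initial vorticity (via the measure-preserving flows), applying \autoref{lem:log_lip} after a change of variables, and then using the monotonicity and concavity of $\gamma$ together with Jensen's inequality on $\mathbb{T}^2$ and on $\Omega$ — exactly the ``same calculations as above'' from the drift estimate in the proof of \autoref{prop:char}. The only cosmetic remark is that the reduction to \autoref{lem:log_lip} needs nothing more than the substitution $x\mapsto -x$ and translation invariance, so the oddness of $K$ is not actually required.
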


\section{Convergence of the vorticity process} \label{sec:conv_vort}

In this brief section, we discuss the consequences of the convergence of the characteristics at the level of the vorticity process.
Recall that we have proved:
\begin{align*}
\mathbb{E} \left[ \sup_{s \leq T} \left\| 
\varphi^\epsilon_{s}-\varphi_s \right\|_{L^1(\mathbb{T}^2,\mathbb{T}^2)}\right] \to 0,
\end{align*}
as $\epsilon \to 0$.
We have the following:
\begin{thm} \label{thm:vort}
The vorticity process $\xi^\epsilon$ solution of the simplified system \eqref{eq:xi_eps} converges to $\xi$ solution of \eqref{eq:xi} as $\epsilon \to 0$ in the following sense:
for every $f \in L^1(\mathbb{T}^2)$:
\begin{align*}
\mathbb{E} \left[\left|
\int_{\mathbb{T}^2}
\xi_t^{\epsilon}(x)  f\left(  x\right)  dx
-
\int_{\mathbb{T}^2}
\xi_t(x)  f\left(  x\right)  dx
\right|\right] \to 0
\end{align*}
as $\epsilon \to 0$, for every fixed $t \in [0,T]$ and in $L^p([0,T])$ for every finite $p$.
\end{thm}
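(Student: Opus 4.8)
The plan is to transfer the convergence of characteristics proved in \autoref{prop:char} to the vorticity fields, exploiting the measure-preserving property of the flows recorded in \autoref{prop:lagrangian}. The starting point is the identity already observed in the reformulation of the problem: for every $f \in L^1(\mathbb{T}^2)$, measure preservation of $\varphi^\epsilon_t$ and $\varphi_t$ gives
\begin{align*}
\int_{\mathbb{T}^2} \xi_t^\epsilon(x) f(x) dx - \int_{\mathbb{T}^2} \xi_t(x) f(x) dx
= \int_{\mathbb{T}^2} \xi_0(y) \left( f(\varphi_t^\epsilon(y)) - f(\varphi_t(y)) \right) dy,
\end{align*}
which recasts the problem as a comparison of $f$ evaluated along the two flows, weighted by the bounded initial datum $\xi_0$.

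First I would dispatch the case of a Lipschitz test function $f$. Here the integrand is bounded pointwise by $\|\xi_0\|_{L^\infty(\mathbb{T}^2)}\, \mathrm{Lip}(f)\, |\varphi_t^\epsilon(y) - \varphi_t(y)|$, so after integrating in $y$ and taking expectations one obtains
\begin{align*}
\mathbb{E} \left[ \left| \int_{\mathbb{T}^2} \xi_t^\epsilon f\, dx - \int_{\mathbb{T}^2} \xi_t f\, dx \right| \right]
\lesssim \mathrm{Lip}(f)\, \mathbb{E} \left[ \sup_{s \leq T} \left\| \varphi_s^\epsilon - \varphi_s \right\|_{L^1(\mathbb{T}^2,\mathbb{T}^2)} \right],
\end{align*}
and the right-hand side vanishes as $\epsilon \to 0$ by \autoref{prop:char}, in fact uniformly in $t \in [0,T]$.

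The main obstacle is upgrading this from Lipschitz to arbitrary $f \in L^1(\mathbb{T}^2)$, since for a merely integrable $f$ there is no modulus of continuity to absorb the flow distance; this is precisely where the measure-preserving property becomes decisive. Given $f \in L^1(\mathbb{T}^2)$ I would choose smooth $f_n \to f$ in $L^1(\mathbb{T}^2)$ and split $f = f_n + (f - f_n)$, treating $f_n$ by the previous step. For the remainder, the key observation is that, because $\varphi_t^\epsilon$ and $\varphi_t$ preserve Lebesgue measure, for a.e. $\omega$ one has $\int_{\mathbb{T}^2} |(f - f_n)(\varphi_t^\epsilon(y))|\, dy = \|f - f_n\|_{L^1(\mathbb{T}^2)}$, and likewise along $\varphi_t$; hence the remainder is controlled by $2\|\xi_0\|_{L^\infty(\mathbb{T}^2)}\, \|f - f_n\|_{L^1(\mathbb{T}^2)}$, a bound that is \emph{uniform} in $\epsilon$, in $t$, and in $\omega$. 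Choosing $n$ large and then $\epsilon$ small yields the convergence for every fixed $t \in [0,T]$. Finally, the $L^p([0,T])$ statement follows by dominated convergence: the same measure-preserving identity furnishes the uniform domination $\mathbb{E}[\,|\int_{\mathbb{T}^2} \xi_t^\epsilon f\, dx - \int_{\mathbb{T}^2} \xi_t f\, dx|\,] \leq 2\|\xi_0\|_{L^\infty(\mathbb{T}^2)}\, \|f\|_{L^1(\mathbb{T}^2)}$, a constant belonging to $L^p([0,T])$ for every finite $p$, which dominates the pointwise-in-$t$ convergence just established.
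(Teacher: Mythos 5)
Your proof is correct, and at the top level it follows the same strategy as the paper: pass to the characteristics via the measure-preserving change of variables, approximate $f$ by regular functions, use measure preservation to control the approximation error uniformly in $\epsilon$, $t$ and $\omega$, and invoke the convergence of characteristics for the regular part. The technical implementation of the approximation step differs, though. The paper uses Lusin's theorem to produce a continuous $f_\delta$ agreeing with $f$ outside a set of measure $\delta$, controls the exceptional set by absolute continuity of the Lebesgue integral combined with measure preservation, and then handles the continuous part by citing \cite[Proposition 6.2]{BrFlMa16}. You instead approximate $f$ in $L^1(\mathbb{T}^2)$ by smooth functions and observe that measure preservation gives the \emph{exact} identity $\int_{\mathbb{T}^2}|(f-f_n)(\varphi^\epsilon_t(y))|\,dy=\|f-f_n\|_{L^1(\mathbb{T}^2)}$, which makes the remainder estimate one line; the Lipschitz part is then dispatched quantitatively by $\mathrm{Lip}(f_n)\,\mathbb{E}\bigl[\sup_{s\le T}\|\varphi^\epsilon_s-\varphi_s\|_{L^1(\mathbb{T}^2,\mathbb{T}^2)}\bigr]$ and \autoref{prop:char}, with no external reference needed. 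Your route is somewhat cleaner: the Lusin decomposition in the paper is phrased in terms of the set $C_\delta$ in the $y$-variable while $f=f_\delta$ only on $C_\delta$ in the target variable, so it really should be carried out on the preimages $(\varphi^\epsilon_t)^{-1}(C_\delta)\cap(\varphi_t)^{-1}(C_\delta)$ (whose complements are still small by measure preservation); your $L^1$-density argument avoids this bookkeeping entirely. The concluding dominated-convergence step for the $L^p([0,T])$ statement, with the uniform bound $2\|\xi_0\|_{L^\infty(\mathbb{T}^2)}\|f\|_{L^1(\mathbb{T}^2)}$, is the same in both treatments.
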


\begin{proof}
By \eqref{eq:transport_eps} and the fact that $\varphi^\epsilon : \mathbb{T}^2 \to \mathbb{T}^2$ is measure-preserving, a change of variable leads to
\begin{align*}
\int_{\mathbb{T}^2}
\xi_t^{\epsilon}(x)  f\left(  x\right)  dx
&=
\int_{\mathbb{T}^2}
\xi_{0}\left(
y\right)  f\left(  \varphi_{t}^{\epsilon}\left(  y\right)  \right)  dy,
\end{align*}
for every $t \in [0,T]$ and $f \in L^1(\mathbb{T}^2)$. Similarly, 
\begin{align*}
\int_{\mathbb{T}^2}
\xi_t(x)  f\left(  x\right)  dx
&=
\int_{\mathbb{T}^2}
\xi_{0}\left(  y\right)
f\left(  \varphi_{t}\left(  y\right)  \right)  dy.
\end{align*}

Since $f \in L^1(\mathbb{T}^2)$, then by Lusin theorem \cite[Theorem 2.23]{Ru70} for every $\delta>0$ there exists a continuous function $f_\delta \in C(\mathbb{T}^2)$ and a compact set $C_\delta$ such that $f$ concides with $f_\delta$ on $C_\delta$ and $meas(\mathbb{T}^2 \setminus C_\delta) < \delta$. Therefore
\begin{align*}
&\left| 
\int_{\mathbb{T}^2}
\xi_{0}\left(
y\right)  f\left(  \varphi_{t}^{\epsilon}\left(  y\right)  \right)  dy-
\int_{\mathbb{T}^2}
\xi_{0}\left(  y\right)
f\left(  \varphi_{t}\left(  y\right)  \right)  dy\right| \\
&\qquad \leq
\| \xi_0 \|_{L^\infty(\mathbb{T}^2)}
\int_{C_\delta}
\left| 
f(\varphi_{t}^{\epsilon}(y))-
f(\varphi_{t}(y)) \right| dy \\
&\qquad +
\| \xi_0 \|_{L^\infty(\mathbb{T}^2)}
\int_{\mathbb{T}^2 \setminus C_\delta}
\left| f(\varphi_{t}^{\epsilon}(y))\right| dy\\
&\qquad +
\| \xi_0 \|_{L^\infty(\mathbb{T}^2)}
\int_{\mathbb{T}^2 \setminus C_\delta}
\left| f(\varphi_{t}(y))\right| dy.
\end{align*}
Since $|f| \in L^1(\mathbb{T}^2)$ and $\varphi^\epsilon_t$, $\varphi_t$ are measure-preserving, absolute continuity of Lebesgue integral gives: for every $\delta'>0$ there exists $\delta>0$ such that, for every $\epsilon>0$, $t \in [0,T]$ and a.e. $\omega \in \Omega$:
\begin{align*}
\int_{\mathbb{T}^2 \setminus C_\delta}
\left| f(\varphi_{t}^{\epsilon}(y))\right| dy
+
\int_{\mathbb{T}^2 \setminus C_\delta}
\left| f(\varphi_{t}(y))\right| dy
< \delta'.
\end{align*}
It remains to study the quantity
\begin{align*}
\int_{C_\delta}
\left| 
f(\varphi_{t}^{\epsilon}(y))-
f(\varphi_{t}(y)) \right| dy
\leq
\int_{\mathbb{T}^2}
\left| 
f_\delta(\varphi_{t}^{\epsilon}(y))-
f_\delta(\varphi_{t}(y)) \right| dy.
\end{align*}
Since $f_\delta$ is continuous, one can argue as in \cite[Proposition 6.2]{BrFlMa16} to get
\begin{align*}
\mathbb{E} \left[\int_{\mathbb{T}^2}
\left| 
f_\delta(\varphi_{t}^{\epsilon}(y))-
f_\delta(\varphi_{t}(y)) \right| dy
\right] \to 0
\end{align*}
as $\epsilon \to 0$, for every fixed $t \in [0,T]$ and in $L^p([0,T])$ for every finite $p$. Putting all together, the proof is complete.
\end{proof}

\section{Back to 2D Euler equations} \label{sec:full}
In this section we focus back to the \textit{full} 2D Euler system \eqref{eq:full_euler}
\begin{align*}
	\begin{cases}
d\xi_{\text{L}}^{\epsilon}+
u_{\text{L}}^{\epsilon}\cdot\nabla\xi_{\text{L} 
}^{\epsilon} dt  = 
-u_{\text{S}}^{\epsilon}\cdot\nabla\xi_{\text{L} 
}^{\epsilon} dt,\\
d\xi_{\text{S}}^{\epsilon}+
u_{\text{L}}^{\epsilon}\cdot\nabla\xi_{\text{S}
}^{\epsilon}dt   =-\epsilon^{-2}
\xi_{\text{S}}^{\epsilon}dt+\epsilon^{-2}dW_t, \\
\xi_{\text{L}}^{\epsilon}|_{t=0}   =\xi_{0},\quad\xi_{\text{S}}^{\epsilon
}|_{t=0}=\xi_{\text{S}}^{0,\epsilon}. \nonumber
	\end{cases}
\end{align*}

Recall that the Brownian motion $W(t,x)$ is given by
\begin{align*}
W(t,x) =
\sum_{k \in \mathbb{N}}
\theta_k(x) \beta^k_t,
\end{align*}
where the coefficients $\theta_k$ satisfy assumption (A1):
\begin{gather*}
\theta_k \in L^2_0(\mathbb{T}^2) \cap C^1(\mathbb{T}^2,\mathbb{R}), \\
\sum_{k \in \mathbb{N}}
\| \nabla \theta_k \|_{L^\infty(\mathbb{T}^2,\mathbb{R}^2)} 
< \infty.
\end{gather*}

Let $\Theta^\epsilon(t,x) = \sum_{k \in \mathbb{N}} \theta_k(x) \eta^{\epsilon,k}_t$ be the Ornstein-Uhlenbeck process solution of
\begin{align*}
d\Theta^\epsilon
=
-\epsilon^{-2} \Theta^\epsilon dt
+\epsilon^{-2}dW_t,
\end{align*}
with initial condition $\xi_{\text{S}}^{0,\epsilon}$. For simplicity, { take $\xi_{\text{S}}^{0,\epsilon}$ as in \autoref{sec:not}, so that $\Theta^\epsilon$ is a stationary process, progressively measurable with respect to the filtration $\{\mathcal{F}_t\}_{t \geq 0}$, and $\eta^{\epsilon,k}$ is independent of $\eta^{\epsilon,h}$ for $k \neq h$}.
Notice that the regularity of $\Theta^\epsilon$ is the same of $W$. In particular, under assumption (A1), $\Theta^\epsilon$ takes a.s. values in $C([0,T],W^{1,\infty}(\mathbb{T}^2))$ and
\begin{align} \label{eq:Theta}
\mathbb{E} \left[
\sup_{t \in [0,T]}
\| \nabla \Theta^\epsilon \|
_{L^\infty(\mathbb{T}^2,\mathbb{R}^2)}
\right] \leq C \epsilon^{-1}.
\end{align}

Define the difference process:
\begin{align*}
\zeta^\epsilon = \xi_{\text{S}}^{\epsilon} - \Theta^\epsilon, 
\end{align*}
which solves the equation
\begin{align*}
d\zeta^\epsilon+
u_{\text{L}}^{\epsilon}\cdot\nabla\zeta^\epsilon dt  =& 
-\epsilon^{-2}
\zeta^\epsilon dt
-u_{\text{L}}^{\epsilon}\cdot\nabla \Theta^\epsilon dt, \\
\zeta^\epsilon_0 =&\, 0.
\end{align*}

We consider also the following auxiliary process
\begin{align*}
\tilde{\zeta}^\epsilon_t 
= 
e^{\epsilon^{-2}t}\zeta^\epsilon _t,
\end{align*} 
which solves the same equation without damping:
\begin{align*}
d\tilde{\zeta}^\epsilon+
u_{\text{L}}^{\epsilon}\cdot\nabla\tilde{\zeta}^\epsilon dt  =& 
-u_{\text{L}}^{\epsilon}\cdot\nabla \Theta^\epsilon dt, \\
\tilde{\zeta}^\epsilon_0 =& 0.
\end{align*}

By \cite[Theorem 6.1.6]{Ku97} the problem above admits formally an unique solution:
\begin{align} \label{eq:kunita}
\tilde{\zeta}^\epsilon(t,x)
=
-\int_0^t
(u_{\text{L}}^{\epsilon}\cdot\nabla \Theta^\epsilon)(s,(\phi^\epsilon_{s,t})^{-1}(x))ds,
\end{align}
where $\phi^\epsilon$ is defined by
\begin{align*}
d \phi^\epsilon_{s,t}(x) 
=
u_{\text{L}}^{\epsilon}(t,\phi^\epsilon_{s,t}(x)) dt, 
\quad
\phi^\epsilon_{s,s}(x) = x.
\end{align*}
Recalling the equality $\tilde{\zeta}^\epsilon_t 
= 
e^{\epsilon^{-2}t}\zeta^\epsilon _t$, the equation above becomes
\begin{align*}
{\zeta}^\epsilon(t,x)
=
-e^{-\epsilon^{-2}t}
\int_0^t
(u_{\text{L}}^{\epsilon}\cdot\nabla \Theta^\epsilon)(s,\phi_{s,t}^{-1}(x))ds.
\end{align*}

By difference, we recover the small scale vorticity $\xi_{\text{S}}^{\epsilon}$
and we can plug it into the equation for the large scale vorticity $\xi_{\text{L}}^{\epsilon}$ to obtain:
\begin{align} \label{eq:large}
d\xi_{\text{L}}^{\epsilon}+
u_{\text{L}}^{\epsilon}\cdot\nabla\xi_{\text{L} %
}^{\epsilon} dt
= &
-(K\ast \Theta^\epsilon)
\cdot\nabla\xi_{\text{L} }^{\epsilon} dt \\
&+e^{-\epsilon^{-2}t} \nonumber
K \ast \left( \int_0^t
(u_{\text{L}}^{\epsilon}\cdot\nabla \Theta^\epsilon)(s,\phi_{s,t}^{-1}(\cdot))ds \right)
\cdot\nabla\xi_{\text{L} }^{\epsilon} dt
\end{align}

This is a (highly non-linear) transport equation with random coefficients.
It is worth noticing that we have obtained equation \eqref{eq:large} above by a formal application of \eqref{eq:kunita}, somewhat in the same spirit of formal integration-by-parts performed when dealing with weak solutions of certain PDEs.
Well posedness, in the Lagrangian sense -- that is, the analogous of \autoref{def:sol} -- of \eqref{eq:large} is the content of the following:

\begin{prop} \label{prop:well_posedness}
For every $\epsilon >0$, equation \eqref{eq:large} admits a unique weakly progressively measurable solution $\xi^\epsilon_L$, given by the transportation of the initial vorticity $\xi_0 $ along the characteristics $\psi^\epsilon$ defined below.
Moreover, the characteristics $\psi^\epsilon$ of the full equation \eqref{eq:large} converge to the characteristics  \eqref{eq:char} as $\epsilon \to 0$ in the following sense:
\begin{align*}
\mathbb{E} \left[ \sup_{s \leq T} \left\| 
\psi^\epsilon_{s}-\varphi_s \right\|_{L^1(\mathbb{T}^2,\mathbb{T}^2)}\right] \to 0.
\end{align*}
\end{prop}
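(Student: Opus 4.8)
The plan is to recognise that equation \eqref{eq:large} is the simplified equation \eqref{eq:xi_eps} perturbed by one additional drift, and that this drift is negligible as $\epsilon\to0$. Writing $K\ast\Theta^\epsilon=\sum_{k}\sigma_k\eta^{\epsilon,k}$ and setting
\begin{equation*}
R^\epsilon(t,x) = e^{-\epsilon^{-2}t}\, K\ast\left(\int_0^t (u_{\text{L}}^\epsilon\cdot\nabla\Theta^\epsilon)(s,(\phi^\epsilon_{s,t})^{-1}(\cdot))\,ds\right)(x),
\end{equation*}
equation \eqref{eq:large} becomes $d\xi_{\text{L}}^\epsilon+V^\epsilon\cdot\nabla\xi_{\text{L}}^\epsilon\,dt=0$ with solenoidal transport field $V^\epsilon=u_{\text{L}}^\epsilon+K\ast\Theta^\epsilon-R^\epsilon$, whose characteristics are the flow $\psi^\epsilon$ solving
\begin{equation*}
d\psi^\epsilon_t(x) = u_{\text{L}}^\epsilon(t,\psi^\epsilon_t(x))\,dt + \sum_{k}\sigma_k(\psi^\epsilon_t(x))\,\eta^{\epsilon,k}_t\,dt - R^\epsilon(t,\psi^\epsilon_t(x))\,dt,
\end{equation*}
that is, exactly \eqref{eq:char_eps} up to the extra drift $-R^\epsilon$. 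By \eqref{eq:kunita} the scalar convolved in $R^\epsilon$ equals $-\tilde\zeta^\epsilon$, which is zero-mean because the source $u_{\text{L}}^\epsilon\cdot\nabla\Theta^\epsilon=\nabla\cdot(\Theta^\epsilon u_{\text{L}}^\epsilon)$ has vanishing average; hence $R^\epsilon=-K\ast\zeta^\epsilon$ is a genuine divergence-free Biot--Savart field.

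The conceptual heart of the statement is the single bound
\begin{equation*}
\mathbb{E}\left[\sup_{t\in[0,T]}\|R^\epsilon(t,\cdot)\|_{L^\infty(\mathbb{T}^2,\mathbb{R}^2)}\right]\lesssim\epsilon,
\end{equation*}
which I would obtain as follows: $K\ast$ is bounded on $L^\infty$, composition with the measure-preserving auxiliary flow $\phi^\epsilon$ leaves $L^\infty$ norms unchanged, $\|u_{\text{L}}^\epsilon\|_{L^\infty}\lesssim\|\xi_0\|_{L^\infty}$ since $\psi^\epsilon$ preserves Lebesgue measure, and $\mathbb{E}[\sup_s\|\nabla\Theta^\epsilon_s\|_{L^\infty}]\lesssim\epsilon^{-1}$ by \eqref{eq:Theta}. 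The deterministic factor $t\mapsto te^{-\epsilon^{-2}t}$ attains its maximum $e^{-1}\epsilon^2$ at $t=\epsilon^2$, so the exponential relaxation of the small scales on the time scale $\epsilon^2$ exactly absorbs the $\epsilon^{-1}$ blow-up of $\nabla\Theta^\epsilon$ and leaves an $O(\epsilon)$ remainder.

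For well-posedness I would mirror the (omitted) proof of \autoref{prop:lagrangian}, working pathwise in $\omega$ with $\Theta^\epsilon(\omega)\in C([0,T],W^{1,\infty}(\mathbb{T}^2))$ a fixed datum. The field $V^\epsilon$ is bounded and, uniformly in time, spatially log-Lipschitz: $u_{\text{L}}^\epsilon$ and $R^\epsilon$ are log-Lipschitz by \autoref{lem:log_lip} (both are $K\ast(\text{bounded scalar})$), while $K\ast\Theta^\epsilon=\sum_{k}\sigma_k\eta^{\epsilon,k}$ is Lipschitz by assumption (A1). I would then run a fixed-point scheme over measure-preserving flows: a candidate $\psi$ determines $\xi_{\text{L}}=\xi_0\circ\psi^{-1}$, then $u_{\text{L}}=K\ast\xi_{\text{L}}$, then the auxiliary flow $\phi$ and the remainder $R$, and finally a new flow by integrating the characteristic ODE; uniqueness of that ODE follows from the Osgood criterion, and closure of the scheme from a Gronwall-type estimate against the concave modulus $\gamma$, controlled by \autoref{lem:comp}, exactly as in \autoref{prop:char}. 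Measure-preservation of $\psi^\epsilon$ is automatic since every summand of $V^\epsilon$ is divergence-free, and then $\xi_{\text{L}}^\epsilon=\xi_0\circ(\psi^\epsilon)^{-1}$ is the unique Lagrangian solution in the sense of \autoref{def:sol}.

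For the convergence $\psi^\epsilon\to\varphi$ I would transfer the proof of \autoref{prop:char}, replacing $\varphi^\epsilon$ by $\psi^\epsilon$: since $\psi^\epsilon$ carries the same Biot--Savart self-interaction and the same Ornstein--Uhlenbeck noise drift $\sum_{k}\sigma_k\eta^{\epsilon,k}$, the discretization, \autoref{lem:est_aux} and the Nakao argument of \autoref{lem:nakao} apply unchanged and yield
\begin{equation*}
\mathbb{E}\left[\sup_{s\leq t}\|\psi^\epsilon_s-\varphi_s\|_{L^1(\mathbb{T}^2,\mathbb{T}^2)}\right]\lesssim\int_0^t\gamma\!\left(\mathbb{E}\left[\sup_{r\leq s}\|\psi^\epsilon_r-\varphi_r\|_{L^1(\mathbb{T}^2,\mathbb{T}^2)}\right]\right)ds + \tilde r^\epsilon_T,
\end{equation*}
where $\tilde r^\epsilon_T$ now also contains $\int_0^T\|R^\epsilon(s,\cdot)\|_{L^\infty}\,ds$, which is $O(\epsilon)$ by the bound above (using again that $\psi^\epsilon$ is measure-preserving to pass the $L^\infty$ bound to $L^1$); \autoref{lem:comp} then closes the argument. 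The main obstacle, in my view, is the well-posedness half rather than the convergence half: equation \eqref{eq:large} is genuinely coupled and non-local in time, since $R^\epsilon$ depends on the entire past of the unknown through both $u_{\text{L}}^\epsilon$ and the auxiliary flow $\phi^\epsilon$, so one must justify the formal representation \eqref{eq:kunita} and verify that the fixed-point map contracts in spite of this memory. Once the uniform log-Lipschitz and boundedness bounds on $V^\epsilon$ are secured, however, everything reduces to the Osgood/Gronwall scheme already in place, and the convergence needs no new idea beyond the $O(\epsilon)$ control of $R^\epsilon$.
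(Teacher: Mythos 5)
Your proposal is correct and follows essentially the same route as the paper: a pathwise Picard iteration over measure-preserving flows, closed by the Osgood/Gronwall estimate against the concave modulus $\gamma$ via \autoref{lem:log_lip} and \autoref{lem:comp}, for well-posedness; and the observation that the extra drift $K\ast\zeta^\epsilon$ is $O(\epsilon)$ (the factor $t e^{-\epsilon^{-2}t}\leq e^{-1}\epsilon^2$ absorbing the $\epsilon^{-1}$ growth of $\nabla\Theta^\epsilon$ from \eqref{eq:Theta}), so that the convergence argument of \autoref{prop:char} carries over unchanged. The only cosmetic difference is that you bound the remainder in $L^\infty$ where the paper bounds $\zeta^\epsilon$ in $L^1$; both give the same $O(\epsilon)$ contribution.
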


\begin{proof}

Consider the following system of characteristics:
\begin{align} \label{eq:char_full} \tag{C} 
	\left\{	
	\begin{array}{rl}
d \psi^\epsilon_{t}(x) 
=&
u_{\text{L}}^{\epsilon}(t,\psi^\epsilon_{t}(x)) dt +
(K\ast \Theta^\epsilon) (t,\psi^\epsilon_{t}(x)) dt \\ &+
(K\ast \zeta^\epsilon) (t,\psi^\epsilon_{t}(x)) dt,
\quad
\psi^\epsilon_{0}(x)= x, \vspace{0.1cm}\\ 
\xi_{\text{L}}^{\epsilon}(t,x)
=&
\xi_0(({\psi^\epsilon}_t)^{-1}(x)), \vspace{0.1cm}\\
u_{\text{L}}^{\epsilon}(t,x)
=&
(K \ast \xi_{\text{L}}^{\epsilon})(t,x), \vspace{0.1cm}\\
d \phi^\epsilon_{s,t}(x) 
=& 
u_{\text{L}}^{\epsilon}(t,\phi^\epsilon_{s,t}(x)) dt, 
\quad
\phi^\epsilon_{s,s}(x) = x,\vspace{0.1cm}\\
{\zeta}^\epsilon (t,x)
=&
-e^{-\epsilon^{-2}t}
\int_0^t
(u_{\text{L}}^{\epsilon}\cdot\nabla \Theta^\epsilon)(s,(\phi_{s,t}^\epsilon)^{-1}(x))ds.	
	\end{array}
	\right.
\end{align}

Notice that the only unknown of the system above is the characteristic $\psi^\epsilon$, the other quantities being uniquely determined $\omega$-wise by the former, see \cite[Section 3]{BrFlMa16}.
We prove path-by-path well posedness of the system \eqref{eq:char_full} in the class of flows of measure-preserving homeomorphisms.
The argument is similar to that of the proof of \cite[Theorem 3.4]{BrFlMa16}.

Let $\mathcal{M}_T$ be the space
\begin{align*}
\mathcal{M}_T 
= 
\Big\{
&\psi :  [0,T] \times \mathbb{T}^2 \to \mathbb{T}^2 \mbox{ measurable }, 
\\
&\psi_t \mbox{  is a measure-preserving homeomorphism for every }
t \in [0,T]
\Big\}.
\end{align*}
The proof relies on a Picard iteration with \emph{fixed} $\omega$. Let $G:\mathcal{M}_T \mapsto \mathcal{M}_T$ be the map
that at every $\psi$ associates the solution $G(\psi)$ of the equation: 
\begin{align*}
G(\psi)_t (x)
=&
\,x 
+
\int_0^t
u(s,G(\psi)_s(x)) ds
\\
&+
\int_0^t
(K \ast \Theta^\epsilon)(s,G(\psi)_s(x)) ds \\
&+
\int_0^t
(K \ast \zeta)(s,G(\psi)_s(x)) ds,
\end{align*}
where $u=u^\psi$, $\zeta=\zeta^\psi$ are computed from $\psi$ and not from $G(\psi)$, so that the $G(\psi)$ is well defined for every $\psi \in  \mathcal{M}_T$ (see \cite[Lemma 3.1]{BrFlMa16} for the analogous result for Euler equations).
For any $\psi, \psi' \in \mathcal{M}_T$, a computation similar to that of the proof of \autoref{prop:char} yields the key inequality
\begin{align*}
\left\|
G(\psi)_t - G(\psi')_t \right\|_{L^1(\mathbb{T}^2,\mathbb{T}^2)}
\lesssim& \left( 1+ \sup_{s \leq T}
\left\|
\nabla \Theta^\epsilon_s \right\|_{L^\infty(\mathbb{T}^2,\mathbb{R}^2)}
\right) \\
&\quad \times
\int_0^t
\gamma \left( 
\left\|
G(\psi)_s - G(\psi')_s \right\|_{L^1(\mathbb{T}^2,\mathbb{T}^2)}
\right)
ds \\
&+
 \left( 1+ \sup_{s \leq T}
\left\|
\nabla \Theta^\epsilon_s \right\|_{L^\infty(\mathbb{T}^2,\mathbb{R}^2)}
\right) \\
&\quad \times
\int_0^t
\gamma \left( 
\left\|
\psi_s - \psi'_s \right\|_{L^1(\mathbb{T}^2,\mathbb{T}^2)}
\right)
ds,
\end{align*}
which guarantees the a.s. convergence of the Picard iteration towards a solution of the system \eqref{eq:char_full} on the time interval $[0,T_1]$, where $0<T_1\leq T$ may depend on $\omega$.
However, for any fixed $\omega$, one can iterate this procedure with the same time step $T_1$, to obtain existence on $[0,T]$ after $N=N(\omega)$ iterations of the argument.
In addition, having care to initialize the iteration scheme with a $\mathcal{F}_0$ measurable random element of $\mathcal{M}_T$ (take for instance $\psi^0_t(x)=x$ for every $t$) we also obtain progressively measurability of the solution so constructed with respect to the filtration $\{\mathcal{F}_t\}_{t \in [0,T]}$.
Uniqueness is obtained applying the same Picard scheme to two solutions $\psi, \psi' \in \mathcal{M}_T$.
We omit the remaining details, which are contained in \cite{BrFlMa16}.

Let us now investigate the convergence of characteristics $\psi^\epsilon \to \varphi$.
The proof is the same as \autoref{prop:char}. We do not repeat it here, and we limit ourselves to notice that, since $\sup_{t \in [0,T]}
\| \xi^\epsilon_L (t)\|_{L^\infty(\mathbb{T}^2)}
\leq
\| \xi_0 \|_{L^\infty(\mathbb{T}^2)}$, we have
\begin{align*}
\sup_{s \leq T} \left\| 
\zeta^\epsilon_s \right\|_{L^1(\mathbb{T}^2)}
\lesssim 
\| \xi_0 \|_{L^\infty(\mathbb{T}^2)}
\epsilon^2 \sup_{s \leq T}
\left\|
\nabla \Theta^\epsilon_s \right\|_{L^\infty(\mathbb{T}^2,\mathbb{R}^2)}.
\end{align*}
By \eqref{eq:Theta}, the expected value of this quantity is infinitesimal as $\epsilon \to 0$ and therefore does not affect the argument of \autoref{prop:char}. 
\end{proof}

In virtue of the previous proposition, we deduce the analogous of \autoref{thm:vort} and \autoref{cor:vel} for the \emph{full} 2D Euler system \eqref{eq:full_euler}, that is \autoref{thm:intro} in the Introduction, whose precise formulation is the following:

\begin{thm} \label{thm:conv_vort_full}
Assume (A1), and let $\xi_{\text{L}}^{\epsilon}$ be the large scale process solution of \eqref{eq:full_euler} in the sense of \autoref{prop:well_posedness}, 
$\xi_{\text{L}}$ be the solution of \eqref{eq:xi}.
Then $\xi_{\text{L}}^{\epsilon}$ converges as $\epsilon\rightarrow0$ to $\xi_{\text{L}}$ in the following sense: 
for every $f \in L^1(\mathbb{T}^2)$:
\begin{align*}
\mathbb{E} \left[\left|
\int_{\mathbb{T}^2}
\xi_{\text{L}}^{\epsilon}(t,x)  f\left(  x\right)  dx
-
\int_{\mathbb{T}^2}
\xi_{\text{L}}(t,x)  f\left(  x\right)  dx
\right|\right] \to 0
\end{align*}
as $\epsilon \to 0$, for every fixed $t \in [0,T]$ and in $L^p([0,T])$ for every finite $p$.
Moreover, the large scale velocity process $u_{\text{L}}^{\epsilon} = K \ast \xi_{\text{L}}^{\epsilon}$ converges towards  $u_{\text{L}} = K \ast \xi_{\text{L}}$ as $\epsilon\rightarrow0$, in
mean value, as variables in $C([0,T],L^{1}(\mathbb{T}^{2},\mathbb{R}^2))  $.
\end{thm}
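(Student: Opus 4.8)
The plan is to reduce the statement entirely to the convergence of characteristics $\psi^\epsilon \to \varphi$ already secured in \autoref{prop:well_posedness}, and then to repeat the arguments of \autoref{thm:vort} and \autoref{cor:vel} verbatim, replacing the simplified characteristics $\varphi^\epsilon$ by the full ones $\psi^\epsilon$. The only structural facts needed about $\psi^\epsilon$ are that, for a.e.\ $\omega$ and every $t$, it is a measure-preserving homeomorphism of $\mathbb{T}^2$ --- which is built into its construction in the class $\mathcal{M}_T$ in \autoref{prop:well_posedness} --- and that the transport representation $\xi_{\text{L}}^\epsilon(t) = \xi_0 \circ (\psi_t^\epsilon)^{-1}$ holds. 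Combined with the measure-preserving property, the latter yields the uniform bound $\sup_{t \in [0,T]} \|\xi_{\text{L}}^\epsilon(t)\|_{L^\infty(\mathbb{T}^2)} \leq \|\xi_0\|_{L^\infty(\mathbb{T}^2)}$, which is the one quantitative input required below.

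First I would establish the vorticity convergence. By the measure-preserving change of variables one has, for every $f \in L^1(\mathbb{T}^2)$,
\begin{align*}
\int_{\mathbb{T}^2} \xi_{\text{L}}^\epsilon(t,x) f(x)\, dx = \int_{\mathbb{T}^2} \xi_0(y)\, f(\psi_t^\epsilon(y))\, dy, \qquad \int_{\mathbb{T}^2} \xi_{\text{L}}(t,x) f(x)\, dx = \int_{\mathbb{T}^2} \xi_0(y)\, f(\varphi_t(y))\, dy,
\end{align*}
so it suffices to control $\mathbb{E}\big[\big|\int_{\mathbb{T}^2} \xi_0(y)(f(\psi_t^\epsilon(y)) - f(\varphi_t(y)))\, dy\big|\big]$. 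Following the proof of \autoref{thm:vort}, I would fix $\delta > 0$ and invoke Lusin's theorem \cite[Theorem 2.23]{Ru70} to find $f_\delta \in C(\mathbb{T}^2)$ and a compact $C_\delta$ with $meas(\mathbb{T}^2 \setminus C_\delta) < \delta$ on which $f = f_\delta$. The contribution of $\mathbb{T}^2 \setminus C_\delta$ is made arbitrarily small, uniformly in $\epsilon$, $t$ and $\omega$, by the absolute continuity of the Lebesgue integral together with the measure-preserving property of $\psi_t^\epsilon$ and $\varphi_t$; on $C_\delta$ one replaces $f$ by the continuous $f_\delta$ and argues exactly as in \cite[Proposition 6.2]{BrFlMa16}, using the continuity of $f_\delta$ and the convergence $\mathbb{E}[\sup_{s \leq T}\|\psi_s^\epsilon - \varphi_s\|_{L^1(\mathbb{T}^2,\mathbb{T}^2)}] \to 0$ from \autoref{prop:well_posedness}, to send the $C_\delta$-term to zero as $\epsilon \to 0$, for fixed $t$ and in $L^p([0,T])$.

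Second, for the velocity field I would mirror \autoref{cor:vel}. Using the measure-preserving change of variables in the Biot--Savart convolution,
\begin{align*}
u_{\text{L}}^\epsilon(t,x) - u_{\text{L}}(t,x) = \int_{\mathbb{T}^2} \left(K(x - \psi_t^\epsilon(y)) - K(x - \varphi_t(y))\right)\xi_0(y)\, dy.
\end{align*}
Taking the $L^1(\mathbb{T}^2)$ norm in $x$, applying Fubini and \autoref{lem:log_lip} (exploiting the oddness of $K$ to put the $x$-integral in the form covered by the lemma), and then the concavity of $\gamma$ through Jensen's inequality both in space and in expectation, one bounds $\mathbb{E}[\sup_{s \leq T}\|u_{\text{L}}^\epsilon - u_{\text{L}}\|_{L^1(\mathbb{T}^2,\mathbb{R}^2)}]$ by $C\|\xi_0\|_{L^\infty(\mathbb{T}^2)}\, \gamma\big(\mathbb{E}[\sup_{s \leq T}\|\psi_s^\epsilon - \varphi_s\|_{L^1(\mathbb{T}^2,\mathbb{T}^2)}]\big)$, which vanishes as $\epsilon \to 0$ by \autoref{prop:well_posedness}; since this bound is uniform in $t$, it simultaneously delivers convergence as variables in $C([0,T], L^1(\mathbb{T}^2,\mathbb{R}^2))$.

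Because the genuinely delicate work --- controlling the small-scale remainder $\zeta^\epsilon$ and establishing convergence of the full characteristics --- has already been carried out in \autoref{prop:well_posedness}, I expect no serious obstacle at this stage. The only point demanding care is the uniformity in $\epsilon$, $t$ and $\omega$ of the Lusin cut-off, and this is guaranteed precisely because every $\psi_t^\epsilon$ is measure-preserving, so the measure of the exceptional set is transported without distortion; everything else is a transcription of \autoref{thm:vort} and \autoref{cor:vel}.
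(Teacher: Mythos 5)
Your proposal is correct and follows essentially the same route as the paper: the paper deduces \autoref{thm:conv_vort_full} directly from the convergence of the full characteristics in \autoref{prop:well_posedness} by repeating the arguments of \autoref{thm:vort} (Lusin cut-off plus the measure-preserving property) and \autoref{cor:vel} (the Biot--Savart estimate of \autoref{lem:log_lip} with concavity of $\gamma$), exactly as you do. Your write-up merely makes explicit the details the paper leaves to the reader.
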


\appendix

\section{Convergence of weak solutions}

Throughout the paper we have used the Lagrangian formulation of Euler equations and, more generally, of transport-type equations.
This point of view turns out to be really effective to investigate the convergence of the large scale component of the system \eqref{eq:full_euler} as $\epsilon \to 0$, since the nonlinear term in the equation of characteristics has been widely studied before.

In this section we aim to link the Lagrangian point of view on Euler equations and other equations of transport-type with the analitically weak point of view.

To fix the ideas, take first 2D Euler equations in vorticity form \eqref{eq:xi_intro}. 
Both Lagrangian formulation and weak formulation aim to give a notion of solution to \eqref{eq:xi_intro} which does not require the regularity needed for classical solution.
In the case of Lagrangian formulation, the space derivative of the solution is formally canceled out by the composition with the characteristics: 
\begin{align*}
\partial_t \xi_t(\varphi_t(x)) = 0.
\end{align*}
In the weak formulation, the derivatives of the solution are formally eliminated by an integration-by-parts formula for the product of the solution against regular test functions: to be precise, a weak solution of \eqref{eq:xi_intro} is given by a function $\xi \in L^\infty([0,T],L^\infty(\mathbb{T}^2))$ such that, for every test function $f \in C^1(\mathbb{T}^2)$, it holds for every $t \in [0,T]$:
\begin{align*}
\int_{\mathbb{T}^2}
\xi_t(x) f(x) dx
=
\int_{\mathbb{T}^2}
\xi_0(x) f(x) dx
+
\int_0^t \left(
\int_{\mathbb{T}^2}
\xi_s(x) (K \ast \xi_s)(x) \cdot \nabla f(x) dx
\right) ds.
\end{align*}

In the case of 2D Euler equations in vorticity form, the Lagrangian point of view and the analitically weak point of view are equivalent, that is, every Lagrangian solution of \eqref{eq:xi_intro} is also a weak solution, and every weak solution of \eqref{eq:xi_intro} is given by the transportation of the initial datum $\xi_0$ along characteristics. The proof of this classical fact under the assumption $\xi_{0} \in L_0^\infty(\mathbb{T}^2)$ can be found in \cite{MaPu94}.

In \cite{BrFlMa16} a similar statement is proved for the limiting process \eqref{eq:xi}:
\begin{align*}
d\xi_t+u_t\cdot\nabla\xi_t dt &=
-\sum_{k \in \mathbb{N}}
\sigma_{k}\cdot\nabla\xi_t\circ d\beta^{k}_t,
\end{align*}
with $u_t=K\ast\xi_t$. In this case, for every initial datum $\xi_{0} \in L_0^\infty(\mathbb{T}^2)$, the Lagrangian formulation of \eqref{eq:xi}, which has been used in the present work, is equivalent to the following distributional formulation (see \cite[Theorem 2.14 and Proposition 5.3]{BrFlMa16}).
\begin{definition}
A weakly progressively measurable process $\xi \in L^\infty(\Omega \times [0,T] \times \mathbb{T}^2 )$ is said to be a \emph{$L^\infty$ distributional solution} to \eqref{eq:xi} if for every test function $f \in C^\infty(\mathbb{T}^2)$ it holds $\mathbb{P}$-a.s.: for every $t \in [0,T]$
\begin{align*}
\int_{\mathbb{T}^2} \xi_t(x) f(x) dx
=&
\int_{\mathbb{T}^2} \xi_0(x) f(x) dx
+
\int_0^t \left(
\int_{\mathbb{T}^2} \xi_s(x) (K\ast\xi_s)(x) \cdot \nabla f(x) dx \right) ds \\
&+ 
\sum_{k \in \mathbb{N}}
\int_0^t \left(
\int_{\mathbb{T}^2} \xi_s(x) \sigma_k(x) \cdot \nabla f(x) dx \right) d\beta^{k}_s \\
&- 
\frac12 \sum_{k \in \mathbb{N}}
\int_0^t \left(
\int_{\mathbb{T}^2} \xi_s(x)
[(\sigma_k(x) \cdot \nabla)\sigma_k(x)]  \cdot \nabla f(x) dx \right) ds \\
&- 
\frac12 \sum_{k \in \mathbb{N}}
\int_0^t \left(
\int_{\mathbb{T}^2} \xi_s(x)
\,\mbox{\emph{div}} [(\sigma_k \cdot \nabla)\sigma_k ](x) f(x) dx \right) ds \\
&+ 
\frac12 \sum_{k \in \mathbb{N}}
\int_0^t \left(
\int_{\mathbb{T}^2} \xi_s(x) 
\,\mbox{\emph{tr}}[\sigma_k \sigma_k^* \nabla^2 f]  (x) dx \right) ds.
\end{align*}
\end{definition}

Following \cite{BrFlMa16}, we define an analogous notion of $L^\infty$ distributional solution to \eqref{eq:large}.
Recall the definition of the Ornstein-Uhlenbeck process:
\begin{align*}
\Theta^\epsilon(t,x) =& \sum_{k \in \mathbb{N}} \theta_k(x) \eta^{\epsilon,k}_t.
\end{align*}

\begin{definition}
A weakly progressively measurable process $\xi^\epsilon \in L^\infty(\Omega \times [0,T] \times \mathbb{T}^2 )$ is said to be a \emph{$L^\infty$ distributional solution} to \eqref{eq:large} if for every test function $f \in C^\infty(\mathbb{T}^2)$ it holds $\mathbb{P}$-a.s.: for every $t \in [0,T]$
\begin{align*}
\int_{\mathbb{T}^2} \xi^\epsilon_t(x) f(x) dx
=&
\int_{\mathbb{T}^2} \xi_0(x) f(x) dx
+
\int_0^t \left(
\int_{\mathbb{T}^2} \xi^\epsilon_s(x) u^{\epsilon}_s(x) \cdot \nabla f(x) dx \right) ds \\
&+
\int_0^t \left(
\int_{\mathbb{T}^2} \xi^\epsilon_s(x) (K\ast \Theta^\epsilon_s)(x) \cdot \nabla f(x) dx \right) ds \\
&+
\int_0^t \left(
\int_{\mathbb{T}^2} \xi^\epsilon_s(x) (K\ast \zeta^\epsilon_s)(x) \cdot \nabla f(x) dx \right) ds, 
\end{align*}
where $u^{\epsilon}(t,x)=(K \ast \xi^{\epsilon})(t,x)$ and $\zeta^\epsilon$ is given by
\begin{align*}
{\zeta}^\epsilon(t,x)
=&
-e^{-\epsilon^{-2}t}
\int_0^t
(u^{\epsilon}\cdot\nabla \Theta^\epsilon)(s,\phi_{s,t}^{-1}(x))ds, \\ 
d \phi^\epsilon_{s,t}(x) 
=& 
u^{\epsilon}(t,\phi^\epsilon_{s,t}(x)) dt, 
\quad
\phi^\epsilon_{s,s}(x) = x.
\end{align*}
\end{definition}

As for the equations \eqref{eq:xi_intro} and \eqref{eq:xi}, the notion of $L^\infty$ distributional solution to \eqref{eq:large} is indeed equivalent to the notion of Lagrangian solution used throughout the paper. This is the content of the forthcoming:

\begin{prop} \label{prop:well_posedness_distr}
The unique Lagrangian solution $\xi^\epsilon$ to \eqref{eq:large} given by \autoref{prop:well_posedness} is also a $L^\infty$ distributional solution.
Conversely, every $L^\infty$ distributional solution to \eqref{eq:large} is also a Lagrangian solution.
\end{prop}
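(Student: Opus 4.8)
The plan is to follow the scheme of \cite{BrFlMa16}, exploiting the structural fact that, for \emph{fixed} $\omega$, equation \eqref{eq:large} is a \emph{linear} transport equation once $\xi^\epsilon$ — and hence the entire drift — is regarded as given. The crucial point is that the characteristic system \eqref{eq:char_full} contains no stochastic integral: the processes $\Theta^\epsilon$ and $\zeta^\epsilon$ are absolutely continuous in time, so $\psi^\epsilon$ solves a pathwise ODE. This is exactly why no It\=o--Stratonovich correction terms appear in the distributional formulation above, in contrast with the distributional formulation for \eqref{eq:xi}. Writing the drift as
\begin{align*}
b^\epsilon(t,x) = u^\epsilon(t,x) + (K\ast\Theta^\epsilon)(t,x) + (K\ast\zeta^\epsilon)(t,x),
\end{align*}
each summand is of the form $K\ast(\cdot)$ and hence divergence-free, so $b^\epsilon$ is divergence-free and its flow is measure-preserving. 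Moreover, for a.e.\ $\omega$, $b^\epsilon$ is bounded and log-Lipschitz in space on $[0,T]\times\mathbb{T}^2$: the term $u^\epsilon=K\ast\xi^\epsilon$ is log-Lipschitz by \autoref{lem:log_lip} together with $\|\xi^\epsilon_t\|_{L^\infty(\mathbb{T}^2)}\leq\|\xi_0\|_{L^\infty(\mathbb{T}^2)}$, while $K\ast\Theta^\epsilon$ and $K\ast\zeta^\epsilon$ are in fact Lipschitz thanks to \eqref{eq:Theta} and the boundedness of $u^\epsilon$ and $\nabla\Theta^\epsilon$. Consequently $b^\epsilon$ generates a unique flow of measure-preserving homeomorphisms, which is precisely $\psi^\epsilon$.

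For the direct implication, I would start from $\xi^\epsilon_t=\xi_0\circ(\psi^\epsilon_t)^{-1}$ and, for $f\in C^\infty(\mathbb{T}^2)$, use the measure-preserving change of variables to write $\int_{\mathbb{T}^2}\xi^\epsilon_t f\,dx=\int_{\mathbb{T}^2}\xi_0\,f(\psi^\epsilon_t)\,dx$. Since $t\mapsto\psi^\epsilon_t(x)$ is absolutely continuous with derivative $b^\epsilon(t,\psi^\epsilon_t(x))$, the ordinary chain rule gives $\frac{d}{dt}f(\psi^\epsilon_t(x))=\nabla f(\psi^\epsilon_t(x))\cdot b^\epsilon(t,\psi^\epsilon_t(x))$; integrating in time, applying Fubini (justified by the boundedness of $\xi_0$, $\nabla f$ and $b^\epsilon$), and changing variables back yields
\begin{align*}
\int_{\mathbb{T}^2}\xi^\epsilon_t f\,dx
= \int_{\mathbb{T}^2}\xi_0 f\,dx
+ \int_0^t\int_{\mathbb{T}^2}\xi^\epsilon_s\, b^\epsilon(s,\cdot)\cdot\nabla f\,dx\,ds.
\end{align*}
Expanding $b^\epsilon$ into its three summands reproduces exactly the three integral terms in the definition of $L^\infty$ distributional solution.

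For the converse, given a distributional solution $\xi^\epsilon$ I would set $u^\epsilon=K\ast\xi^\epsilon$ and observe that $\phi^\epsilon$, $\zeta^\epsilon$, and hence the full drift $b^\epsilon$, are thereby determined pathwise; the distributional identity states precisely that $\xi^\epsilon$ is a bounded weak solution of the linear transport equation $\partial_t\xi+b^\epsilon\cdot\nabla\xi=0$ with datum $\xi_0$. Because $b^\epsilon$ is divergence-free and log-Lipschitz, the uniqueness argument of \cite{BrFlMa16} — a renormalization/commutator estimate, equivalently a duality argument against the characteristics — applies and forces $\xi^\epsilon_t=\xi_0\circ(\psi^\epsilon_t)^{-1}$. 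Thus the distributional solution is a Lagrangian solution, and the triple $(\xi^\epsilon,u^\epsilon,\psi^\epsilon)$ solves the self-consistent system \eqref{eq:char_full}; by the uniqueness already established in \autoref{prop:well_posedness} it coincides with the one constructed there.

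The main obstacle is this converse direction, and specifically the uniqueness of bounded weak solutions for the transport equation when the drift is merely log-Lipschitz rather than Lipschitz. The additional drift terms $K\ast\Theta^\epsilon$ and $K\ast\zeta^\epsilon$ are more regular than $u^\epsilon$ and do not worsen the situation, so the entire difficulty is concentrated in the $u^\epsilon=K\ast\xi^\epsilon$ part — exactly the borderline regularity already handled for 2D Euler in \cite{BrFlMa16}. The proof therefore reduces to transcribing their commutator/renormalization estimate to our time-dependent drift and checking that the relevant constants are uniform on $[0,T]$ for a.e.\ $\omega$.
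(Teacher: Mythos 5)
Your proposal is correct and follows essentially the same route as the paper: the direct implication via the pathwise chain rule along the (ODE) characteristics and the measure-preserving change of variables, and the converse via the mollification/commutator (renormalization) argument of \cite{BrFlMa16} applied to the divergence-free, log-Lipschitz drift $b^\epsilon$. The only minor imprecision is the claim that $K\ast\zeta^\epsilon$ is Lipschitz — since $\zeta^\epsilon$ is merely bounded in space (its regularity is limited by the composition with $(\phi^\epsilon_{s,t})^{-1}$), this term is only log-Lipschitz via \autoref{lem:log_lip}, which is in any case all the argument requires.
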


\begin{proof}

\emph{Step 1}. We first prove that the Lagrangian solution to \eqref{eq:large} is also a $L^\infty$ distributional solution. 
Let $\psi^\epsilon$ be the unique stochastic flow of homeomorphism solution of the system of characteristics \eqref{eq:char_full}.
By the representation formula
$\xi^\epsilon_t = \xi_0 \circ (\psi^\epsilon_t)^{-1}$ one immediately has $\xi^\epsilon \in L^\infty(\Omega \times [0,T] \times \mathbb{T}^2)$.
By the a.s. measure-preserving property of $\psi^\epsilon$ one gets, for any $f \in L^1(\mathbb{T}^2)$ :
\begin{align} \label{eq:xi_duality}
\int_{\mathbb{T}^2} \xi^\epsilon_t(x) f(x) dx
=
\int_{\mathbb{T}^2} \xi_0(x) f(\psi^\epsilon_t(x)) dx.
\end{align}
Since for every $x \in \mathbb{T}^2$ the process $t \mapsto \psi^\epsilon_t(x)$ is progressively measurable, one can deduce from \eqref{eq:xi_duality} that $\xi^\epsilon$ is weakly progressively measurable.

Let now $f \in C^\infty(\mathbb{T}^2)$ be a given test function. Define
\begin{align*}
v^\epsilon_s(x)
=
u^\epsilon_s(x)
+
(K\ast \Theta^\epsilon_s)(x)
+
(K\ast \zeta^\epsilon_s)(x).
\end{align*}
Using 
\begin{align*}
d f( \psi^\epsilon_t(x) )
=
v^\epsilon_t(\psi^\epsilon_t(x)) \cdot \nabla f(\psi^\epsilon_t(x) ) dt,
\end{align*}
multiplying per $\xi_0$, integrating in time and space, and using \eqref{eq:xi_duality} one obtains that $\xi^\epsilon$ is a $L^\infty$ distributional solution to \eqref{eq:large}.

\emph{Step 2}. We prove that every $L^\infty$ distributional solution to \eqref{eq:large} is also a Lagrangian solution, \emph{i.e.} it is given by the transportation of the initial vorticity $\xi_0$ along characteristics.

A weakly progressively measurable process $\xi^\epsilon \in L^\infty([0,T] \times \mathbb{T}^2 \times \Omega)$ being a $L^\infty$ distributional solution to \eqref{eq:large} corresponds to: for every test function $f \in C^\infty(\mathbb{T}^2)$ it holds $\mathbb{P}$-a.s.: for every $t \in [0,T]$
\begin{align} \label{eq:distr_form}
\int_{\mathbb{T}^2} \xi^\epsilon_t(x) f(x) dx
=&
\int_{\mathbb{T}^2} \xi_0(x) f(x) dx
+
\int_0^t \left(
\int_{\mathbb{T}^2} \xi^\epsilon_s(x) v^{\epsilon}_s(x) \cdot \nabla f(x) dx \right) ds.
\end{align}
Let $\rho \in C^\infty(\mathbb{R}^2)$ be a non negative even function, supported in $[-1,1]^2$, with $\int_{\mathbb{R}^2} \rho(x)dx=1$. For every $\delta \in (0,1/2)$, denote $\rho_\delta(x)=\delta^{-2} \rho(x/\delta)$ and
\begin{align*}
\xi^{\epsilon,\delta}_t(x) = \int_{\mathbb{R}^2}\rho_\delta (x-y) \xi^\epsilon_t(y)dy.
\end{align*}
In the integral above, the function $\xi^\epsilon_t: \mathbb{T}^2 \to \mathbb{R}$ is interpreted as a periodic function $\xi^\epsilon_t: \mathbb{R}^2 \to \mathbb{R}$ on the full space. The mollified vorticity $\xi^{\epsilon,\delta}_t$ is smooth and periodic in space, therefore the process $\xi^{\epsilon,\delta}$ has a.s. trajectories in 
\begin{align*}
\xi^{\epsilon,\delta} \in L^\infty([0,T],C^\infty(\mathbb{T}^2)).
\end{align*}
Using that $\xi^\epsilon$ is a $L^\infty$ distributional solution to \eqref{eq:large} in the equivalent formulation \eqref{eq:distr_form} with $f=\rho_\delta(x-\cdot)$, one has for every fixed $x \in \mathbb{T}^2$ the a.s. property: for every $t \in [0,T]$ 
\begin{align*}
\xi^{\epsilon,\delta}_t(x)
=
(\xi_0 \ast \rho_\delta)(x)
+
\int_0^t \left(
\int_{\mathbb{T}^2} (\xi^\epsilon_s(y) v^{\epsilon}_s(y) \cdot \nabla 	\rho_\delta(x-y) dy \right) ds.
\end{align*}
Notice that, by a.s. space regularity of the process $\xi^{\epsilon,\delta}$, one can find a full-measure set $\Omega' \subset \Omega$ such that for every $\omega \in \Omega'$ the property above holds simultaneously for every $x \in \mathbb{T}^2$.
Arguing as in \cite{BrFlMa16}, for $\psi^\epsilon$ solving
\begin{align*}
d \psi^\epsilon_{t}(x) 
=
v^{\epsilon}(t,\psi^\epsilon_{t}(x)) dt,
\quad
\psi^\epsilon_{0}(x)= x,
\end{align*}
one can prove a.s. the following: for every $x \in \mathbb{T}^2$ and $t \in [0,T]$
\begin{align} \label{eq:xi_eps_delta}
\xi^{\epsilon,\delta}_t(\psi^\epsilon_{t}(x))
=
(\xi_0 \ast \rho_\delta)(x)
+
\int_0^t
\left[ v^\epsilon_s \cdot \nabla, \ast \rho_\delta\right] \xi^{\epsilon}_s(\psi^\epsilon_{s}(x)) ds.
\end{align}
The commutator above is defined, for fixed divergence-free $v \in W^{1,p}(\mathbb{T}^2,\mathbb{R}^2)$, $p\in[1,\infty)$, and $w \in L^\infty(\mathbb{T}^2)$ by
\begin{align*}
[ v \cdot \nabla, \ast \rho_\delta ] w
=
v \cdot \nabla (\rho_\delta \ast w)
-
\rho_\delta \ast (v \cdot \nabla  w),
\end{align*}
and satisfies (see \cite[Lemma 5.2]{BrFlMa16})
\begin{align} \label{eq:comm_0}
\lim_{\delta \to 0}
[ v \cdot \nabla, \ast \rho_\delta ] w
=& \,
0
\mbox{  in } L^p(\mathbb{T}^2), \\  \label{eq:comm_bound}
\| [ v \cdot \nabla, \ast \rho_\delta ] w \|_{L^p(\mathbb{T}^2)}
\lesssim& \,
\| \nabla v  \|_{L^p(\mathbb{T}^2)}
\| w  \|_{L^\infty(\mathbb{T}^2)}.
\end{align}
By measure-preserving property of $\psi^\epsilon$,
integrating \eqref{eq:xi_eps_delta} in space yields
\begin{align*}
\int_{\mathbb{T}^2} |\xi^{\epsilon,\delta}_t(\psi^\epsilon_{t}(x))
-
(\xi_0 \ast \rho_\delta)(x)| dx
\lesssim
\int_0^t
\int_{\mathbb{T}^2}
|\left[ v^\epsilon_s \cdot \nabla, \ast \rho_\delta\right] \xi^{\epsilon}_s(x)| dx ds.
\end{align*}
Using \eqref{eq:comm_0} and \eqref{eq:comm_bound} with $p=1$,
the bound
\begin{align*}
\| \nabla v^\epsilon_s  \|_{L^1(\mathbb{T}^2)}
\lesssim& \,
\| \xi^\epsilon_s  \|_{L^\infty(\mathbb{T}^2)}
+
\| \Theta^\epsilon_s  \|_{L^\infty(\mathbb{T}^2)}
+
\| \zeta^\epsilon_s  \|_{L^\infty(\mathbb{T}^2)},
\end{align*}
and Lebesgue dominated convergence Theorem, one obtains a.s. the convergence: for every fixed $t \in [0,T]$, as $\delta \to 0$:
\begin{align*}
\xi^{\epsilon,\delta}_t(\psi^\epsilon_{t}(\cdot))
-
(\xi_0 \ast \rho_\delta) 
\to 0
\mbox{  in }
L^1(\mathbb{T}^2).
\end{align*}
By the fact of $\xi^{\epsilon}$ being a.s. with trajectories in $L^\infty(\mathbb{T}^2)$ and $\psi^\epsilon$ being a.s. measure-preserving, one also has a.s. the convergence: for every fixed $t \in [0,T]$, as $\delta \to 0$:
\begin{align*}
\xi^{\epsilon,\delta}_t(\psi^\epsilon_{t}(\cdot))
-
\xi^{\epsilon}_t(\psi^\epsilon_{t}(\cdot))
\to 0
\mbox{  in }
L^1(\mathbb{T}^2),
\end{align*}
and similarly
\begin{align*}
(\xi_0 \ast \rho_\delta) - \xi_0
\to 0
\mbox{  in }
L^1(\mathbb{T}^2).
\end{align*}
We remark that the previous statement asserts the possibility of finding a full-measure set $\Omega'' \subset \Omega$ such that for every $\omega \in \Omega''$ the convergences above hold for every fixed $t \in [0,T]$.
Putting all together, we finally obtain a.s. the  identity: for every $t \in [0,T]$, $\xi^{\epsilon}_t(\psi^\epsilon_{t}(\cdot)) = \xi_0$ as variables in $L^1(\mathbb{T}^2)$, that is for Lebesgue-a.e. $x \in \mathbb{T}^2$. By boundedness, the identity can be understood as variables in $L^\infty(\mathbb{T}^2)$ as well.
\end{proof}

\autoref{prop:well_posedness_distr} above gives well posedness of \eqref{eq:large} in distributional formulation: indeed, the Lagrangian solution given by \autoref{prop:well_posedness} is a $L^\infty$ distributional solution, giving existence; for uniqueness, it suffices to invoke uniqueness of characteristics and the fact that every $L^\infty$ distributional solution is also a Lagrangian solution. 
In terms of convergence, one can therefore restate \autoref{thm:conv_vort_full} with the $L^\infty$ distributional notion of solution.

\end{document}